\newcommand\footnoteref[1]{\protected@xdef\@thefnmark{\ref{#1}}\@footnotemark}
\date{\today}
\def\beq{\begin{equation}}
\def\eeq{\end{equation}}
\def\PSL{\mathrm{PSL}_2(\mathbb{R})}
\def\PSLc{\mathrm{PSL}_2(\mathbb{C})}
\def\Hyp{\mathbb{H}^3}
\def\H2{\mathbb{H}^2}
\def\CP{\C\PP^1}
\def\RP{\R\PP^1}
\def\restriction#1#2{\mathchoice
              {\setbox1\hbox{${\displaystyle #1}_{\scriptstyle #2}$}
              \end{enumerate}
\restrictionaux{#1}{#2}}
              {\setbox1\hbox{${\textstyle #1}_{\scriptstyle #2}$}
              \restrictionaux{#1}{#2}}
              {\setbox1\hbox{${\scriptstyle #1}_{\scriptscriptstyle #2}$}
              \restrictionaux{#1}{#2}}
              {\setbox1\hbox{${\scriptscriptstyle #1}_{\scriptscriptstyle #2}$}
              \restrictionaux{#1}{#2}}}
\def\restrictionaux#1#2{{#1\,\smash{\vrule height .8\ht1 depth .85\dp1}}_{\,#2}}
\newcommand{\R}{{\mathbb R}}
\newcommand{\C}{{\mathbb C}}
\newcommand{\D}{{\mathbb D}}
\newcommand{\N}{{\mathbb N}}
\newcommand{\PP}{{\mathbb P}}
\newcommand{\loc}{{\operatorname{loc}}}
\newcommand{\cC}{{\mathcal C}}
\newcommand{\cF}{{\mathcal F}}
\newcommand{\cL}{{\mathcal L}}
\newcommand{\eps}{\varepsilon}
\def\dans{\mathop{\subset}}
\newcommand{\Leb}{\mathrm{Leb}}
\newcommand{\sect}{\mathrm{sect}}
\newcommand{\QF}{\mathrm{QF}}
\newcommand{\QC}{\mathrm{QC}}
\newcommand{\KD}{\mathrm{KD}}
\newcommand{\MKD}{\mathrm{MKD}}
\newcommand{\FKD}{\mathrm{FKD}}
\newcommand{\JC}{\mathrm{JC}}
\newcommand{\rD}{\mathrm{D}}
\newcommand{\Vol}{\mathrm{Vol}}
\newcommand{\bord}{\partial_{\infty}}
\newcommand{\Ent}{\mathrm{Ent}}
\newcommand{\Isom}{\mathrm{Isom}}
\newcommand{\Area}{\mathrm{Area}}
\newtheorem{theorem}{Theorem}[section]
\newtheorem{remark}[theorem]{Remark}
\newtheorem{lemma}[theorem]{Lemma}
\newtheorem{conjecture}[theorem]{Conjecture}
\newtheorem{defi}[theorem]{Definition}
\newtheorem{prop}[theorem]{Proposition}
\newtheorem{corollary}[theorem]{Corollary}
\newtheorem{question}[theorem]{Question}
\def\triplealign#1{\null\,\vcenter{\openup1\jot \m@th %
\ialign{\strut\hfil$\displaystyle{##}\quad$&$\displaystyle{{}##}$\hfil&$\displaystyle{{}##}$\hfil\crcr#1\crcr}}\,}
\def\multiline#1{\null\,\vcenter{\openup1\jot \m@th %
\ialign{\strut$\displaystyle{##}$\hfil&$\displaystyle{{}##}$\hfil\crcr#1\crcr}}\,}
\begin{document}
	
\title[]{Rigidity of the hyperbolic marked energy spectrum and entropy for $k$-surfaces}

\author[]{Sébastien Alvarez, Ben Lowe, Graham Andrew Smith}
\address{}
\email{}

\date{\today}

\maketitle

\selectlanguage{english}
\begin{abstract}
Labourie raised the question of determining the possible asymptotics for the growth rate of compact $k$-surfaces, counted according to energy, in negatively curved $3$-manifolds, indicating the possibility of a theory of thermodynamical formalism for this class of surfaces. Motivated by this question and by analogous results for the geodesic flow, we prove a number of results concerning the asymptotic behavior of high energy $k$-surfaces, especially in relation to the curvature of the ambient space.

First, we determine a rigid upper bound for the growth rate of quasi-Fuchsian $k$-surfaces, counted according to energy, and with asymptotically round limit set, subject to a lower bound on the sectional curvature of the ambient space.  We also study the marked energy spectrum for $k$-surfaces, proving a number of domination and rigidity theorems in this context. Finally, we show that the marked area and energy spectra for $k$-surfaces in $3$-dimensional manifolds of negative curvature are asymptotic if and only if the sectional curvature is constant.

\end{abstract}

\selectlanguage{french}
\begin{abstract}
Labourie a soulevé la question de déterminer les comportements asymptotiques possibles pour le taux de croissance des $k$-surfaces compactes, comptées selon leur énergie, dans les $3$-variétés de courbure négative, en suggérant la possibilité d'une théorie du formalisme thermodynamique pour cette classe de surfaces. Motivés par cette question et par des résultats analogues pour le flot géodésique, nous démontrons plusieurs résultats concernant le comportement asymptotique des $k$-surfaces de grande énergie, en particulier en lien avec la courbure de l'espace ambiant.

Premièrement, nous établissons une borne supérieure rigide pour le taux de croissance des $k$-surfaces quasi-Fuchsiennes, comptées selon leur énergie et ayant un ensemble limite asymptotiquement circulaire, sujet à une borne inférieure pour la courbure sectionnelle de l'espace ambiant. Nous étudions également le spectre marqué des énergies des $k$-surfaces, en prouvant plusieurs théorèmes de domination et de rigidité dans ce contexte. Enfin, nous montrons que les spectres marqués des aires et des énergies des $k$-surfaces dans les $3$-variétés de courbure négative sont asymptotiques si et seulement si la courbure sectionnelle est constante.
\end{abstract}
\selectlanguage{english}

\tableofcontents

\section{Introduction}

\subsection{Context}
This paper continues our study of the dynamical properties of the space of immersed surfaces of constant, positive extrinsic curvature - henceforth called $k$-surfaces - in closed, negatively-curved $3$-manifolds. The study of this space was initiated by Labourie in a remarkable series of papers \cite{LabourieGAFA,LabourieInvent,LabourieAnnals} (see also \cite{Labourie_phase_space}). There, building on Gromov's theory of foliated Plateau problems elaborated in \cite{Gromov_FolPlateau1,Gromov_FolPlateau2}, he showed that, for ambient manifolds of sectional curvature bounded above by $-1$, and for $0<k<1$, the space of marked $k$-surfaces exhibits many of the hyperbolic properties of the geodesic flow. More precisely:
\begin{enumerate}
\item it has a natural compactification which is laminated by Riemann surfaces;
\item it is independent of the ambient metric up to leaf-preserving homeomorphism, in analogy to Gromov's geodesic rigidity theorem \cite{Gromov3} (a local version was proven by Labourie in the geometrically finite case \cite{LabourieInvent}, and a global version was recently proven in the general case by the third author in \cite{Smith_asymp});
\item the generic leaf is dense, and the union of closed leaves is also dense;
\item the lamination admits an infinite family of mutually singular, ergodic, totally invariant measures of full support  obtained by a coding procedure.
\end{enumerate}

In this vein, Labourie suggested a possible analogy between the thermodynamic properties of the geodesic flow in negative curvature and the thermodynamic properties of the space of $k$-surfaces \cite{LabourieInvent,Labourie_phase_space}. More precisely, in Question $(ii)$ of Section $1.2$ of \cite{LabourieInvent}, he posed the question of determining the possible asymptotic growth rates of the number of closed $k$-surfaces in a  compact, negatively-curved $3$-manifold, counted according to their energies \footnote{Note that what we refer to in the present paper as ``energy'' is referred to by Labourie in \cite{LabourieInvent} as ``area''.}.  Little progress has been made towards a satisfactory answer to his question up to this point.    However, following the groundbreaking work \cite{CMN} of Calegari--Marques--Neves (see also \cite{LabourieBourbaki}), and continuing in the spirit of \cite{BCG}, we obtained in \cite{ALS} a domination and rigidity result for the asymptotic growth rate according to their {\sl areas} of the number of closed {\sl quasi-Fuchsian} $k$-surfaces in any such manifold. In this paper, we build on these ideas to prove domination and rigidity results for the asymptotic growth rates for this class of surfaces, counted according to their \textit{energies}. We view this as partially addressing Labourie's question.

\subsection{Marked area and energy spectra for $k$-surfaces}

Before stating our main results, we first require a few definitions. Let $(X,h_0):=\Bbb{H}^3/\Pi$ be a compact, $3$-dimensional hyperbolic manifold, where $\Pi$ is a discrete, torsion-free subgroup of $\Isom^+(\Hyp)=\PSLc$. Given $C\geq 1$, we say that a compact surface subgroup $\Gamma$ of $\Pi$ is $C$-\emph{quasi-Fuchsian} whenever its limit set is a $C$-quasicircle. The existence of large families of quasi-Fuchsian subgroups was proven by Kahn--Markovi\'c in \cite{KM1,KM2} (see also \cite{HamenstadtKM,Kahn_Labourie_Mozes}). Let $\QF(C)$ denote the set of conjugacy classes of oriented $C$-quasi-Fuchsian subgroups of $\Pi$, and denote
\begin{equation*}
\QF:=\bigcup_{C\geq 1}\QF(C)\ .
\end{equation*}
Let $h$ be another riemannian metric on $X$ of sectional curvature less than $-k$, for some $k>0$. It follows from the main results of \cite{LabourieInvent} (see also \cite{Smith_asymp}) that, for every oriented conjugacy class $[\Gamma]\in\QF$, there exists a unique closed $k$-surface in $X$ representing $[\Gamma]$, which we henceforth denote by $S_{k,h}([\Gamma])$.

We find that our results are best contextualized by the following reformulation in terms of the marked area spectrum of the main result of \cite{ALS}. We define the \emph{marked (quasi-Fuchsian) $k$-surface area spectrum} to be the map $\text{MAS}_{k,h}:\QF\rightarrow]0,\infty[$ given by
\begin{equation*}
\text{MAS}_{k,h}([\Gamma]) := \Area_h(S_{k,h}([\Gamma]))\ .
\end{equation*}
The main result of \cite{ALS} can now be expressed as follows.

\begin{theorem}[Domination and rigidity of the hyperbolic marked area spectra for $k$-surfaces]\label{th.rigidity_area}
Let $(X,h_0):=\Bbb{H}^3/\Pi$ be a closed hyperbolic $3$-manifold, and let $h$ be another riemannian metric on $X$ of sectional curvature bounded above by $-1$. For all $0<k<1$,
\begin{equation}\label{eqn.rigidity_area}
\mathrm{MAS}_{k,h}\leq\mathrm{MAS}_{k,h_0}\ ,
\end{equation}
with equality holding if and only if $h$ is hyperbolic.
\end{theorem}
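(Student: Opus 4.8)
The plan is to extract both the inequality and the equality case from one identity coming from the Gauss equation and the Gauss--Bonnet formula. Fix $[\Gamma]\in\QF$ and set $S=S_{k,h}([\Gamma])$; since $\Gamma$ is a surface subgroup, $S$ is a closed oriented surface whose genus, hence whose Euler characteristic $\chi(S)<0$, depends only on $[\Gamma]$ and not on the metric. Because $S$ has constant extrinsic curvature $k$, the Gauss equation reads $K^S_{\mathrm{int}}=K^h_{\mathrm{sect}}(T_pS)+k$ at each point, where $K^S_{\mathrm{int}}$ is the intrinsic curvature of $(S,h|_S)$ and $K^h_{\mathrm{sect}}(T_pS)$ is the ambient sectional curvature along the tangent plane. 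Integrating and using Gauss--Bonnet gives
\begin{equation*}
\int_S K^h_{\mathrm{sect}}(T_pS)\,dA_h + k\,\Area_h(S)=2\pi\chi(S).
\end{equation*}

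For the domination \eqref{eqn.rigidity_area}, I would feed in the hypothesis $K^h_{\mathrm{sect}}\leq -1$, which yields $\int_S K^h_{\mathrm{sect}}\,dA_h\leq -\Area_h(S)$ and hence $2\pi\chi(S)\leq (k-1)\Area_h(S)$. Since $0<k<1$ and $\chi(S)<0$, dividing by the negative number $k-1$ reverses the inequality to $\Area_h(S)\leq 2\pi\chi(S)/(k-1)$. The same computation for $h_0$, where $K^{h_0}_{\mathrm{sect}}\equiv -1$ makes every step an equality, gives $\Area_{h_0}(S_{k,h_0}([\Gamma]))=2\pi\chi(S)/(k-1)$; as both surfaces share the same $\chi$, this is exactly $\mathrm{MAS}_{k,h}([\Gamma])\leq\mathrm{MAS}_{k,h_0}([\Gamma])$, and it also shows equality holds whenever $h$ is hyperbolic.

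For the converse, assume $\mathrm{MAS}_{k,h}=\mathrm{MAS}_{k,h_0}$. Then the identity is an equality for every $[\Gamma]$, so the nonpositive quantity $\int_S(K^h_{\mathrm{sect}}+1)\,dA_h$ vanishes, forcing $K^h_{\mathrm{sect}}(T_pS)=-1$ at every point of every quasi-Fuchsian $k$-surface $S=S_{k,h}([\Gamma])$. It then suffices to prove that the tangent planes $\{T_pS_{k,h}([\Gamma]):[\Gamma]\in\QF,\ p\in S_{k,h}([\Gamma])\}$ are dense in the Grassmann bundle $\mathrm{Gr}_2(TX)$, for then continuity of $K^h_{\mathrm{sect}}$ forces $K^h_{\mathrm{sect}}\equiv -1$, so $h$ has constant curvature $-1$, i.e.\ is hyperbolic.

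The main obstacle is this density statement, which is genuinely a statement about the dynamics of $k$-surfaces rather than about the single metric $h$. I would argue it in three steps. First, the equidistribution of quasi-Fuchsian subgroups of Kahn--Markovi\'c \cite{KM1,KM2} shows that the closed leaves indexed by $\QF$ are dense in the space of $k$-surfaces for the hyperbolic metric $h_0$. Second, the metric independence of this space up to leaf-preserving homeomorphism \cite{Smith_asymp} transports this density to the space of $k$-surfaces for $h$. Third, the continuous evaluation map sending a pointed $k$-surface to its base point and tangent plane is surjective onto $\mathrm{Gr}_2(TX)$ by Labourie's foliated Plateau construction \cite{LabourieInvent}, so it carries the dense family of quasi-Fuchsian leaves to a dense set of tangent planes. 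The delicate point, and where I expect the real work to lie, is checking that the evaluation map is open (or at least has dense image on the quasi-Fuchsian locus), so that density in the abstract leaf space genuinely descends to density in $\mathrm{Gr}_2(TX)$; granting this, the rigidity is complete.
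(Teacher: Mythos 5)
Your argument is correct and is essentially the paper's own route: the domination follows from Gauss' equation plus Gauss--Bonnet exactly as in Lemma \ref{lem_lower_bounds_energy}, and the rigidity reduces, as you say, to showing that $\sect_h=-1$ on the tangent planes of all quasi-Fuchsian $k$-surfaces together with a density statement for those planes. The one ``delicate point'' you flag is resolved in the paper not by openness of an evaluation map but directly by the equidistribution of Kahn--Markovi\'c sequences (Theorem \ref{KM_equidistrib}, stated for the metric $h$ itself): the laminar measures of a sequence of $(1+\eps_n)$-quasi-Fuchsian $k$-surfaces converge weak-$\ast$ to a fully supported measure on $S_hX$, so for every open $U\dans S_hX$ some Gauss lift meets $U$; since $S_hX$ identifies with the oriented Grassmann bundle via $v\mapsto v^\perp$, this is precisely the density you need, and continuity of the sectional curvature then gives $\sect_h\equiv -1$.
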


\begin{remark} In particular, by Mostow's rigidity theorem \cite{Mostow}, equality holds if and only if $h$ is isometric to $h_0$.
\end{remark}

For any compact, locally strictly convex immersed surface $S\dans (X,h)$, and for all $p\geq 0$, we now define the \emph{$p$-energy} of $S$ by
\begin{equation}\label{eq.energy}
W^p_h(S):=\int_SH^pd\Area_h\ ,
\end{equation}
where $H$ denotes the mean curvature of $S$ (which, by local strict convexity, we may take to be positive). Note that the $0$-energy of $S$ is simply its area, whilst its $1$-energy is the area with respect to the Sasaki metric of its \emph{Gauss lift} to the unit sphere bundle $S_hX$ (see Section \ref{ss.as_Plateau_definitions}). We define the \emph{marked (quasi-Fuchsian) $k$-surface $p$-energy spectrum} to be the map $\text{MES}_h^p:\QF\rightarrow]0,\infty[$ given by
\begin{equation}\label{eq.MarkedEnergySpectrum}
\text{MES}_h^p([\Gamma]) := W^p_h(S_{k,h}([\Gamma]))\ .
\end{equation}
In particular, the marked $0$-energy spectrum is just the marked area spectrum defined above. Our first main theorem is the following rigidity result for marked energy spectra.

\begin{theorem}[Rigidity of the hyperbolic marked $p$-energy spectrum]\label{thm.Rigidity_hyp_energy_spectrum}
Let $(X,h_0):=\Bbb{H}^3/\Pi$ be a closed hyperbolic $3$-manifold, and let $h$ be another riemannian metric on $X$ of sectional curvature pinched between $-1$ and $-a$, for some $0<a<1$. For all $0<k<a$, the equality of marked energy spectra
\begin{equation}\label{eqn.rigidity_energy}
\mathrm{MES}^p_{k,h_0}=\mathrm{MES}^p_{k,h}\ ,
\end{equation}
holds if and only if $h$ is hyperbolic.
\end{theorem}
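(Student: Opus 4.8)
The reverse implication is immediate: if $h$ is hyperbolic then Mostow rigidity makes it isometric to $h_0$, so the two marked spectra agree. For the substantive direction the plan is to prove a surface-by-surface domination of energies under the pinching $-1\le K_h\le -a$, to identify its equality case with a pointwise curvature condition, and then to spread that condition over all of $X$ by an equidistribution argument, concluding that $h$ has constant curvature $-1$.

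First I would work on a single class $[\Gamma]\in\QF$. Since $k<a$ the metric $h$ has curvature $<-k$, so $S_{k,h}([\Gamma])$ exists and is unique by \cite{LabourieInvent}, and likewise for $h_0$. For any $k$-surface the principal curvatures satisfy $\kappa_1\kappa_2=k$, whence $H=(\kappa_1+\kappa_2)/2\ge\sqrt{k}$, with equality only at umbilic points, and the Gauss equation reads $K^{\mathrm{int}}=K_{\mathrm{sect}}(TS)+k$, where $K_{\mathrm{sect}}(TS)$ is the ambient sectional curvature of the tangent plane. In the hyperbolic model this forces $K^{\mathrm{int}}\equiv k-1$, so $\Area_{h_0}(S_{k,h_0}([\Gamma]))=2\pi\chi/(k-1)$ by Gauss--Bonnet; in the pinched model $K^{\mathrm{int}}\in[k-1,k-a]$, so $\Area_h(S_{k,h}([\Gamma]))\ge 2\pi\chi/(k-1)$, with equality if and only if $K_{\mathrm{sect}}(TS)\equiv-1$ along the surface. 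This already disposes of the area case $p=0$.

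For $p>0$ the energy is no longer a topological quantity, so the Gauss--Bonnet bookkeeping must be supplemented by genuine control of the mean curvature. The goal is the domination $\mathrm{MES}^p_{k,h}\ge\mathrm{MES}^p_{k,h_0}$ (the direction consistent with the area case), with equality at $[\Gamma]$ forcing $K_{\mathrm{sect}}(TS)\equiv-1$ along $S_{k,h}([\Gamma])$. I would obtain this by comparing the two $k$-surfaces through the elliptic equation they satisfy: writing $H^2=k+|\mathring{\mathrm{II}}|^2$ with $\mathring{\mathrm{II}}$ the trace-free second fundamental form, one controls the conformal factor and the size of $\mathring{\mathrm{II}}$ from the intrinsic curvature bound $K^{\mathrm{int}}\ge k-1$, and combines this with the area comparison above and a maximum-principle estimate for the mean curvature of $k$-surfaces under the lower curvature bound. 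The uniqueness statement of \cite{LabourieInvent} then guarantees that, once $K_{\mathrm{sect}}(TS)\equiv-1$ along $S_{k,h}([\Gamma])$, this surface coincides with its hyperbolic counterpart and the energies agree, pinning down the equality case. Finally I would globalize: the hypothesis $\mathrm{MES}^p_{k,h_0}=\mathrm{MES}^p_{k,h}$ makes the domination an equality for \emph{every} $[\Gamma]\in\QF$, hence $K_{\mathrm{sect}}(T_pS)=-1$ for every tangent plane of every quasi-Fuchsian $k$-surface. By the density underlying Labourie's lamination and the equidistribution of $k$-surfaces used in \cite{ALS}, these tangent planes are dense in the Grassmann bundle $\mathrm{Gr}_2(TX)$, so $K_{\mathrm{sect}}\equiv-1$ on all $2$-planes; in dimension three this means $h$ has constant curvature $-1$, i.e.\ $h$ is hyperbolic.

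The main obstacle is the middle step: unlike the area, the $p$-energy is not rigid for topological reasons, so the domination requires a genuine pointwise/integral estimate for the mean curvature of $k$-surfaces under the two-sided pinching, the delicate point being to show that the mean-curvature contribution cannot compensate for the increase in area away from constant curvature. Controlling $|\mathring{\mathrm{II}}|$ (equivalently the deviation of $H$ from $\sqrt{k}$) uniformly along the family, and verifying that the equidistribution of tangent planes inherited from \cite{ALS} is strong enough to detect the pointwise curvature defect, are where the difficulty concentrates.
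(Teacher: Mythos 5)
Your overall skeleton (AM--GM for $H\ge k^{1/2}$, Gauss' equation, Gauss--Bonnet, positivity of $1+\sect_h$, and equidistribution of quasi-Fuchsian $k$-surfaces to a fully supported measure) matches the paper's, but the load-bearing middle step of your argument is a genuine gap. You propose to prove a surface-by-surface domination $\mathrm{MES}^p_{k,h}([\Gamma])\ge\mathrm{MES}^p_{k,h_0}([\Gamma])$ for \emph{every} $[\Gamma]\in\QF$, with equality forcing $\sect_h\equiv-1$ along $S_{k,h}([\Gamma])$. No such domination is established in the paper -- the authors explicitly remark after Theorem \ref{th.rigidity_entropy} that, unlike the area case, one does \emph{not} get a domination of marked energy spectra, precisely because the hyperbolic-side energy is only controlled for almost-Fuchsian classes. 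Concretely: for a general $C$-quasi-Fuchsian class with $C$ far from $1$, the hyperbolic $k$-surface $S_{k,h_0}([\Gamma])$ is far from totally umbilical, so $W^p_{h_0}(S_{k,h_0}([\Gamma]))$ strictly exceeds $k^{p/2}\cdot 2\pi|\chi|/(1-k)$ by an amount governed by the non-roundness of $\bord\Gamma$, while on the $h$-side the excess of $W^p_h$ over the same quantity mixes a curvature-defect term with a non-umbilicity term. Equality of the two energies therefore does not close any chain of inequalities and does not yield the pointwise conclusion $\sect_h|_{TS}\equiv-1$; your appeal to ``the elliptic equation,'' control of $|\mathring{\mathrm{II}}|$, and a maximum principle is exactly where a new estimate would be needed, and none is supplied.

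The paper's route avoids this entirely by working asymptotically: Lemma \ref{l.almost_tot_umbilical} shows that in $(\tilde X,h_0)$ the $k$-disks spanned by $(1+\eps)$-quasicircles have $k^{1/2}\le H\le k^{1/2}(1+\delta)$ with $\delta\to0$ as $\eps\to0$ (a compactness argument using continuity of the asymptotic Plateau problem), whence Corollary \ref{corollarouille} computes the normalized limit $\frac{1}{2\pi|\chi(S_n)|}\int_{S_n^0}(H_n^0)^p\,d\Area_{h_0}\to k^{p/2}/(1-k)$ along any asymptotically Fuchsian sequence. The hypothesis of equal spectra then transfers this \emph{limit} to the $h$-side, where the one-sided bound \eqref{eq.GB} and the positivity of $1+\sigma$ give $\int(1+\sigma)\,d\overline\mu_{S_n,h}\to0$; applying this to the Kahn--Markovi\'c equidistributing sequence and using full support of the limit measure gives $\sect_h\equiv-1$. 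If you replace your per-surface domination by this asymptotic statement along nearly Fuchsian sequences, the rest of your outline goes through; as written, the proof does not.
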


\begin{remark}
Note that this rigidity result requires that the sectional curvature of $h$ be bounded from \emph{below} rather than from above, which has been the case for all previous results of this kind (c.f. \cite{ALS,CMN}).
\end{remark}

The proof makes use of the following straightforward but interesting fact: \emph{in constant sectional curvature, nearly Fuchsian $k$-surfaces are almost totally umbilical}. It turns out that this property characterizes metrics of constant curvature. Indeed, we define the \emph{normalized $p$-energy} of a closed $k$-surface $S\dans X$ by
\begin{equation*}
\overline{W}^{p}_h(S) := k^{-\frac{p}{2}}\int_S H^p d\Area_h\ .
\end{equation*}
For $p\neq q\geq 0$ we say that the normalized marked $k$-surface $p$- and $q$-energy spectra are \emph{asymptotic} whenever, for every sequence $(\eps_n)_{n\in\N}$ of positive numbers converging to $0$, and every sequence $(S_n)_{n\in\N}$ of closed $(1+\eps_n)$-quasi-Fuchsian $k$-surfaces,
\begin{equation*}
\lim_{n\to\infty}\frac{\overline{W}^{p}_h(S_n)}{\overline{W}^{q}_h(S_n)}=1\ .
\end{equation*}

\begin{theorem}[Asymptotic energy spectra]\label{thm.asymptotic_spectra}
Let $(X,h)$ be a closed $3$-manifold of sectional curvature bounded above by $-a$ for some $a>0$. For all $0<k<a$, the following properties are equivalent.
\begin{enumerate}
\item $h$ has constant sectional curvature;
\item for all $p\neq q\geq 0$ the normalized marked $k$-surface $p-$ and $q$-energy spectra of $k$-surfaces are asymptotic; and
\item for some $p\neq q\geq 0$ the normalized marked $k$-surface $p-$ and $q$-energy spectra of $k$-surfaces are asymptotic.
\end{enumerate}
\noindent In particular, $h$ has constant sectional curvature if and only if its normalized marked $k$-surface energy and area spectra are asymptotic.
\end{theorem}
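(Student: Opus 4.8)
The plan is to collapse the entire equivalence onto a single pointwise quantity, the umbilicity defect, and then to show that its asymptotic vanishing is governed by the Codazzi equation. Write the principal curvatures of a $k$-surface as $\kappa_1\geq\kappa_2>0$, so that $\kappa_1\kappa_2=k$ and, with the mean curvature $H=\tfrac12(\kappa_1+\kappa_2)$, one has the elementary identity $H^2-k=\bigl(\tfrac{\kappa_1-\kappa_2}{2}\bigr)^2=\tfrac12|\mathring A|^2$, where $\mathring A$ is the trace-free second fundamental form. Setting $u:=(H^2-k)/k\geq 0$, so that $H=\sqrt{k}\,\sqrt{1+u}$, a direct substitution gives $\overline W^{p}_h(S)=\int_S(1+u)^{p/2}\,d\Area_h$, and hence, writing $\mu_S:=\Area_h/\Area_h(S)$ for the normalized area probability measure,
\begin{equation*}
\frac{\overline W^{p}_h(S)}{\overline W^{q}_h(S)}=\frac{\int_S(1+u)^{p/2}\,d\mu_S}{\int_S(1+u)^{q/2}\,d\mu_S}\ .
\end{equation*}
Since $X$ is compact and $0<k<a$, the a priori curvature estimates of Labourie bound the principal curvatures of all $k$-surfaces uniformly, so $0\leq u\leq C$ for a constant $C$ independent of the surface. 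For $p>q$ this makes $(1+u)^{p/2}-(1+u)^{q/2}$ comparable to $u$ on $[0,C]$, and the denominator lies in $[1,(1+C)^{q/2}]$; hence the ratio tends to $1$ if and only if $\int_S u\,d\mu_S\to0$. This reduces the theorem to showing that the averaged umbilicity defect tends to $0$ along \emph{every} nearly-Fuchsian sequence if and only if $h$ has constant curvature.

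The implication $(2)\Rightarrow(3)$ is immediate. For $(1)\Rightarrow(2)$ I would argue by compactness. If some nearly-Fuchsian sequence had $\sup_{S_n}u_n\geq\delta>0$, I would choose $x_n\in S_n$ almost realizing this supremum, pass to a subsequence with $x_n\to x_\ast$, and, in a fixed chart around $x_\ast$, write the $S_n$ as graphs of uniformly $C^\infty$-bounded functions (Labourie's estimates), extracting a $C^\infty$ limit which is a complete $k$-surface $S_\infty$ with $u_{S_\infty}(x_\ast)\geq\delta$. In constant curvature, however, the limit sets of nearly-Fuchsian surfaces converge to round circles, so $S_\infty$ is an equidistant surface (at constant distance from a totally geodesic plane), which is totally umbilical, forcing $u_{S_\infty}\equiv0$ and a contradiction. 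Thus $\sup_{S_n}u_n\to0$, a fortiori $\int u_n\,d\mu_{S_n}\to0$, and all ratios tend to $1$; this is the ``straightforward'' direction flagged before Theorem~\ref{thm.asymptotic_spectra}.

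The heart of the matter is $(3)\Rightarrow(1)$. Assuming $(3)$, the reduction gives $\int_{S_n}u_n\,d\mu_{S_n}\to0$ for every nearly-Fuchsian sequence. Here I would invoke equidistribution of nearly-Fuchsian $k$-surfaces in the Grassmann bundle $\mathrm{Gr}_2(X)$ of tangent $2$-planes, in the spirit of Calegari--Marques--Neves and Kahn--Markovi\'c: there is such a sequence whose normalized Gauss lifts converge weakly to a measure of full support. Fix any $(x_0,P_0)\in\mathrm{Gr}_2(X)$. By equidistribution there are $y_n\in S_n$ with $(y_n,T_{y_n}S_n)\to(x_0,P_0)$, and the graphical compactness above produces a local $C^\infty$ limit $k$-surface $S_\infty$ through $(x_0,P_0)$. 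If $u_{S_\infty}(x_0)=\delta>0$, then $u_n\geq\delta/2$ on a neighborhood of the approximating points of fixed size, and full-support equidistribution forces that neighborhood to carry $\mu_{S_n}$-mass bounded below, contradicting $\int u_n\,d\mu_{S_n}\to0$. Hence $S_\infty$ is umbilical at $x_0$, and letting the basepoint vary it is totally umbilical, so $A=\sqrt{k}\,g$ and $\nabla A=0$ along it. The Codazzi equation $(\nabla_XA)Y-(\nabla_YA)X=(R(X,Y)N)^{\top}$ then gives $(R(X,Y)N)^{\top}=0$ for all tangent $X,Y$; ranging over all planes $P_0$, every off-diagonal Ricci component vanishes in every orthonormal frame, so $(X,h)$ is Einstein, and in dimension $3$ an Einstein manifold has constant sectional curvature, which is $(1)$.

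I expect the genuine obstacle to lie in $(3)\Rightarrow(1)$, and within it in two linked points. First, one must establish (or cite, for the $k$-surface setting) the equidistribution of nearly-Fuchsian $k$-surfaces with a full-support limit, and extract from it the lower density bound used to promote averaged smallness to a pointwise statement. Second, one must justify the passage from $\int u_n\,d\mu_{S_n}\to0$ to the pointwise umbilicity of local limits, which rests on the uniform $C^\infty$-compactness of $k$-surfaces ensuring that $u$ is continuous under local convergence. By contrast, the closing Codazzi computation and the dimension-$3$ implication Einstein $\Rightarrow$ space form are routine, and the reduction via the identity $H^2-k=\tfrac12|\mathring A|^2$ is purely algebraic.
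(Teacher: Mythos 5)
Your proposal is correct and follows the same strategy as the paper: reduce the asymptotic equality of spectra to the vanishing of an averaged, nonnegative umbilicity defect, then feed this into the equidistributing Kahn--Markovi\'c sequence of Theorem~\ref{KM_equidistrib}, whose full-support limit measure forces the defect to vanish pointwise, and finally pass from total umbilicity of all the $k$-leaves to constancy of the sectional curvature. (Your reduction via $H^2-k=\tfrac12|\mathring A|^2$ is equivalent to the paper's identity involving $k^{-(p-q)/2}H^{p-q}-1$, and your compactness argument for $(1)\Rightarrow(2)$ is exactly Lemma~\ref{l.almost_tot_umbilical}.) The one genuine difference is the last step: the paper observes that total umbilicity of every leaf means $(X,h)$ satisfies the \emph{axiom of $2$-spheres} and cites Leung--Nomizu \cite{Leung_Nomizu}, whereas you rederive that statement by hand via the Codazzi equation, the vanishing of the off-diagonal Ricci components in every frame, and the fact that $3$-dimensional Einstein manifolds have constant curvature; this buys self-containedness at the cost of a computation that is essentially the proof of the cited theorem. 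One presentational point where the paper's route is cleaner: rather than extracting local graphical limits at each $(x_0,P_0)$ and then asserting that ``letting the basepoint vary'' the limit is totally umbilical --- which tacitly requires identifying the local limits obtained at nearby basepoints with one another, i.e.\ the uniqueness coming from the foliated Plateau problem (Theorem~\ref{th_foliated_Plateau}) --- the paper works directly with the continuous mean-curvature function $H$ on the foliated sphere bundle $S_hX$ and concludes $H=k^{1/2}$ everywhere from $\int(k^{-1/2}H-1)\,d\mu_\infty=0$ and full support of $\mu_\infty$ (Proposition~\ref{p.tot_asymptotic}); you should phrase your pointwise promotion this way to avoid the identification issue.
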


We point out that besides the curvature upper bound of $-a$, which is necessary to ensure that $k$-surfaces exist in the first place, the previous theorem, unlike the other theorems stated before, has no assumptions on sectional curvature.

\subsection{Area and energy entropies for $k$-surfaces} The method of proofs of the results above are similar to those of \cite{ALS}. We use the solution of foliated Plateau problems, and the equidistribution properties of closed quasi-Fuchsian $k$-surfaces. This is the method outlined by Labourie in his influential Bourbaki seminar \cite{LabourieBourbaki} in the case of minimal surfaces.

These results can also be phrased in terms of counting. We will define in \S \ref{sss.Modified_entropy} an \emph{entropy} similar to those appearing in \cite{ALS,CMN}. It is actually analogous to the \emph{modified entropy} defined by Marques-Neves in \cite{MN_currents} and is a number, denoted by $\Ent^p_{k,\hat m}(X,h)$, that counts, according to their $p$-energies, the closed quasi-Fuchsian $k$-surfaces whose limit quasicircles becomes more and more round \emph{and equidistribute} to $\hat m$, the unique ergodic fully supported \emph{conformal current in the space $\cC^+$ of round circles}. We refer to \S \ref{sss.conf_currents_PSL_inv_laminar_measures} for the definition of conformal currents as defined by Labourie in \cite{LabourieBourbaki} (see also \cite{ALS,MN_currents}) and to \S \ref{sss.Modified_entropy} for the definition of entropy. The analogue of Theorem \ref{thm.Rigidity_hyp_energy_spectrum} for entropy is a rigid inequality.

\begin{theorem}[Rigid inequality]\label{th.rigidity_entropy}
Let $(X,h_0)$ be a closed hyperbolic $3$-manifold and $h$ be a Riemannian metric on $X$ with sectional curvature  $-1\leq\sect_h\leq -a$ for some $a>0$. Let $k\in (0,a)$ and $p\geq 0$. Then  $\Ent^p_{k,\hat m}(X,h)\leq\Ent^p_{k,\hat m}(X,h_0)$ and equality holds if and only if $h$ and $h_0$ are isometric.
\end{theorem}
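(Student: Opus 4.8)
The plan is to reduce Theorem \ref{th.rigidity_entropy} to the already-proved energy rigidity Theorem \ref{thm.Rigidity_hyp_energy_spectrum} via the equidistribution theory for closed quasi-Fuchsian $k$-surfaces, following the same architecture used in \cite{ALS} to pass from the marked-spectrum statement to a counting statement. Recall that the entropy $\Ent^p_{k,\hat m}(X,h)$ is defined (in \S\ref{sss.Modified_entropy}) as a weighted exponential growth rate counting closed quasi-Fuchsian $k$-surfaces by $p$-energy, restricted to surfaces whose limit quasicircles become asymptotically round and whose associated conformal currents equidistribute toward $\hat m$. The key analytic input is that the $p$-energy $W^p_h(S)$ of such a $k$-surface is governed, to leading order, by an integral against the equidistributing current; more precisely, I expect a variational/integral-geometric formula expressing $\Ent^p_{k,\hat m}(X,h)$ in terms of a single functional $\Phi_h^p(\hat m)$ obtained by integrating a local energy density against the conformal current $\hat m$. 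This is the mechanism that converts a pointwise comparison of energies into a comparison of growth rates.

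First I would make precise the monotonicity of the entropy in the energy functional: because $\hat m$ is fixed and fully supported, and because the counting is organized by the $p$-energy, a pointwise domination of the energy density for $h$ by that for $h_0$ along surfaces equidistributing to $\hat m$ yields $\Ent^p_{k,\hat m}(X,h)\leq\Ent^p_{k,\hat m}(X,h_0)$. The pointwise domination itself should come from the same curvature-comparison estimate underlying Theorem \ref{thm.Rigidity_hyp_energy_spectrum}: under the pinching $-1\leq\sect_h\leq -a$ and for $0<k<a$, a nearly-Fuchsian $k$-surface in $(X,h)$ has mean curvature and area controlled from above by the corresponding hyperbolic model, so that the integrand $H^p\,d\Area_h$ is dominated in the limit. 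Second, I would establish the equality case. Here the rigidity content of Theorem \ref{thm.Rigidity_hyp_energy_spectrum} does the work: if $\Ent^p_{k,\hat m}(X,h)=\Ent^p_{k,\hat m}(X,h_0)$, then the dominating inequality for the energy densities must be an asymptotic equality $\hat m$-almost everywhere, which forces the marked energy spectra to agree asymptotically along sequences equidistributing to $\hat m$, and hence (invoking density of closed leaves and full support of $\hat m$) to agree as the spectra $\mathrm{MES}^p_{k,h}=\mathrm{MES}^p_{k,h_0}$. By Theorem \ref{thm.Rigidity_hyp_energy_spectrum} this is equivalent to $h$ being hyperbolic, and then Mostow rigidity \cite{Mostow} upgrades this to $h$ being isometric to $h_0$, giving the stated equality case.

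The technical heart is the first step: rigorously identifying the entropy with an integral functional against $\hat m$ and justifying the interchange of the limit defining the entropy with the energy comparison. The main obstacle I anticipate is the uniformity of the energy estimates as the quasicircles round out. One must control $W^p_h(S_n)$ uniformly over the family of $(1+\eps_n)$-quasi-Fuchsian $k$-surfaces equidistributing to $\hat m$, with error terms that vanish fast enough not to perturb the exponential growth rate; this requires the compactness and regularity theory for the foliated Plateau problem of \cite{LabourieInvent,Smith_asymp} together with the asymptotic-umbilicity fact (\emph{in constant curvature, nearly Fuchsian $k$-surfaces are almost totally umbilical}) quoted above, applied to $h_0$ and compared to $h$. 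A secondary subtlety is ensuring that conditioning the count on equidistribution to the specific current $\hat m$ is compatible with the variational formula: one needs the weight structure of the modified entropy (in the sense of \cite{MN_currents}) to localize the growth rate at $\hat m$, so that the comparison is genuinely between the two metrics evaluated at the \emph{same} current rather than between optimizing currents. Once the integral representation and its uniformity are in place, both the inequality and the rigidity in the equality case follow formally from Theorem \ref{thm.Rigidity_hyp_energy_spectrum}.
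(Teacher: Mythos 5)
Your proposal rests on two steps that do not hold up. First, for the inequality you invoke a ``pointwise domination of the energy density for $h$ by that for $h_0$'' along nearly-Fuchsian surfaces, packaged through a conjectural integral representation $\Ent^p_{k,\hat m}(X,h)=\Phi^p_h(\hat m)$. No such domination exists, and the paper explicitly warns about this (see the remark after Theorem \ref{th.rigidity_entropy}: ``we don't get a domination of marked energy spectra\dots The domination is thus asymptotic and appears at the level of entropies''). The actual mechanism for $\Ent^p_{k,\hat m}(X,h)\leq\Ent^p_{k,\hat m}(X,h_0)$ is a universal \emph{lower} bound on the energy of each surface in $(X,h)$: by the arithmetic--geometric mean inequality $H\geq k^{1/2}$, and by Gauss--Bonnet together with $\sect_h\geq -1$ one gets $\Area_h(S)\geq 2\pi|\chi(S)|/(1-k)$, hence $W^p_h(S)\geq \frac{4\pi k^{p/2}}{1-k}(\mathrm{g}-1)$ (Lemma \ref{lem_lower_bounds_energy}). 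This converts the count by energy into a count by genus, which Kahn--Markovi\'c's topological counting (Theorem \ref{th.KM2}) bounds by $\frac{k^{-p/2}(1-k)}{2\pi}$; the matching value of $\Ent^p_{k,\hat m}(X,h_0)$ is obtained separately from the almost-umbilicity Lemma \ref{l.almost_tot_umbilical} plus the M\"uller--Puchta/Lowe--Neves lower bound. Your route would require an upper bound on $W^p_h$ surface-by-surface in terms of the hyperbolic model, which is exactly what is unavailable.

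Second, in the equality case you claim that equality of entropies forces the full marked spectra to coincide, $\mathrm{MES}^p_{k,h}=\mathrm{MES}^p_{k,h_0}$, so that Theorem \ref{thm.Rigidity_hyp_energy_spectrum} can be applied as a black box. That upgrade is not justified: equality of the two growth rates only yields (by the soft pigeonhole argument of Lemma \ref{lem_converging_sequence}) \emph{one} sequence $[\Gamma_n]\in\QF_{\hat m}(\eta_n)$, $\eta_n\to 0$, with $W^p_h(S_{k,h}([\Gamma_n]))/W^p_{h_0}(S_{k,h_0}([\Gamma_n]))\to 1$; it says nothing about all conjugacy classes. The correct conclusion of the argument is not to recover the hypothesis of Theorem \ref{thm.Rigidity_hyp_energy_spectrum} but to rerun its \emph{proof}, which only needs this asymptotic ratio along a sequence whose laminar measures converge to the fully supported ergodic measure: combining Corollary \ref{corollarouille} with Gauss--Bonnet gives $\int_{S_hX}(1+\sigma)\,d\overline{\mu}_{S_n,h}\to 0$, and full support of the limit measure forces $\sect_h\equiv -1$, whence Mostow rigidity. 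Your overall instinct --- that the equality case reduces to the marked-spectrum analysis via equidistribution --- is right, but as written the reduction passes through a false intermediate statement, and the inequality half of the theorem is argued by a mechanism that does not exist.
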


\begin{remark}
Unlike the case of the marked area spectra, we don't get a domination of marked energy spectra, since we control only the energy of almost Fuchsian $k$-surfaces in $\Hyp$, which are nearly totally umbilical. The domination is thus asymptotic and appears at the level of entropies. So the situation is similar to that of minimal surface areas in \cite{CMN}, contrasting with our result in \cite{ALS}.
\end{remark}

The analogue of Theorem \ref{thm.Rigidity_hyp_energy_spectrum} is the following theorem proving entropy rigidity (we refer to \S \ref{sss.equality_modified_entropies} for the definition of $\Ent^\mathrm{Area}_{k,\hat m}(X,h)$, the modified area entropy for $k$-surfaces).

\begin{theorem}[Entropy rigidity]\label{equality_energy_area}
Let $(X,h)$ be a closed $3$-manifold with sectional curvature  $\sect_h\leq -a$ for some $0<a$. Let $k\in (0,a)$. Then the following properties are equivalent.
\begin{enumerate}
\item The sectional curvature of $h$ is constant.
\item For some $p\neq q\geq 0$ we have
$$k^{-p/2} \Ent^p_{k,\hat m}(X,h)=k^{-q/2}\Ent^q_{k,\hat m}(X,h).$$
\item The modified area entropy and modified $p$-energy entropy coincide for some $p\neq 0$
$$\Ent^\mathrm{Area}_{k,\hat m}(X,h)=k^{-p/2}\Ent^p_{k,\hat m}(X,h).$$
\end{enumerate}
\end{theorem}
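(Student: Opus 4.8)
The plan is to deduce this entropy statement from the asymptotic spectrum result, Theorem~\ref{thm.asymptotic_spectra}, by showing that the normalized energy entropies record precisely the asymptotic energy ratios along the equidistributing families of increasingly round $k$-surfaces that saturate the modified entropy. I would start from the elementary pointwise inequality coming from the defining relation $\kappa_1\kappa_2=k$ of a $k$-surface: by the arithmetic--geometric mean inequality its mean curvature satisfies $H\geq\sqrt{k}$, with equality exactly at umbilic points. Hence $(H/\sqrt{k})^p$ is nondecreasing in $p$ and bounded below by $1$, so that for $p\geq q\geq 0$, after integration,
\begin{equation*}
\overline{W}^{p}_h(S)\geq\overline{W}^{q}_h(S)\geq\Area_h(S),
\end{equation*}
with equality if and only if $S$ is totally umbilic. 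This is the rigid pointwise input, and it is the direct analogue of the observation, recorded before Theorem~\ref{thm.asymptotic_spectra}, that nearly totally umbilic $k$-surfaces have normalized $p$-energy close to their area.

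Second, I would translate this pointwise comparison into a rigid comparison of the normalized entropies $k^{-p/2}\Ent^p_{k,\hat m}(X,h)$. Since $\Ent^p_{k,\hat m}(X,h)$ counts, according to $p$-energy, only those closed quasi-Fuchsian $k$-surfaces whose limit quasicircles become round and equidistribute to $\hat m$, every family contributing to its exponential growth is asymptotically round, hence asymptotically nearly Fuchsian. Comparing energy thresholds through the inequality above makes $p\mapsto k^{-p/2}\Ent^p_{k,\hat m}(X,h)$ monotone, and, crucially, the normalized entropies for two exponents coincide if and only if the ratio $\overline{W}^{p}_h(S_n)/\overline{W}^{q}_h(S_n)$ tends to $1$ along the entropy-saturating sequences $(S_n)$ equidistributing to $\hat m$. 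This upgrade from an equality of $\limsup$ growth rates to an asymptotic statement about the surfaces is the step I expect to be the main obstacle: I would argue by contradiction, showing that a persistent gap $\overline{W}^{p}_h(S_n)\geq(1+\delta)\overline{W}^{q}_h(S_n)$ along all admissible sequences forces a strict inequality of the normalized entropies, using the equidistribution and foliated Plateau compactness machinery of the earlier sections so that the limiting energy ratio along an $\hat m$-equidistributing family is a single number obtained by integrating the (nonnegative) umbilic defect against $\hat m$. Because $\hat m$ has full support, the vanishing of this integral would force the umbilic defect to vanish along every increasingly round family, i.e. that the marked $p$- and $q$-energy spectra are asymptotic in the sense of Theorem~\ref{thm.asymptotic_spectra}.

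With this dictionary the equivalences follow quickly. The implication $(3)\Rightarrow(2)$ is immediate, since $\Ent^{\mathrm{Area}}_{k,\hat m}(X,h)=\Ent^{0}_{k,\hat m}(X,h)$ is the case $q=0$ of $(2)$. For $(1)\Rightarrow(3)$ I would use that in constant curvature Theorem~\ref{thm.asymptotic_spectra} makes the normalized spectra asymptotic, so that along the round, $\hat m$-equidistributing surfaces defining the entropy one has $\overline{W}^{p}_h(S_n)/\Area_h(S_n)\to 1$, whence $k^{-p/2}\Ent^p_{k,\hat m}(X,h)=\Ent^{\mathrm{Area}}_{k,\hat m}(X,h)$ by the dictionary. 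Finally, for $(2)\Rightarrow(1)$, the equality of normalized entropies for some $p\neq q$ forces, via the rigidity of the entropy comparison and the full support of $\hat m$, the asymptotic energy ratio to equal $1$ along all increasingly round equidistributing families; this is exactly the assertion that the normalized marked $p$- and $q$-energy spectra are asymptotic, so Theorem~\ref{thm.asymptotic_spectra} yields that $h$ has constant sectional curvature. The concluding special case recorded for area is then the choice $q=0$.
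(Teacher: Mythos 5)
Your proposal is correct and follows essentially the same route as the paper: a contradiction argument in the style of Lemma~\ref{lem_converging_sequence} extracts from the entropy equality a sequence in $\QF_{\hat m}(\eta_n)$, $\eta_n\to 0$, whose normalized energy ratios tend to $1$; equidistribution to the fully supported ergodic laminar measure, together with nonnegativity and continuity of the umbilic defect, then forces $H\equiv k^{1/2}$ on $S_hX$, and constant curvature follows exactly as in Section~\ref{ss.proof_of_thm_B} via the axiom of $2$-spheres. The only cosmetic difference is that you route the final step back through Theorem~\ref{thm.asymptotic_spectra} as a black box (which needs the extra compactness argument upgrading $H\equiv k^{1/2}$ on $\MKD_{k,h}(1)$ to uniform near-umbilicity of $k$-disks spanned by nearly round quasicircles), whereas the paper concludes directly from Proposition~\ref{p.tot_asymptotic}; note also that your ``dictionary'' inherits the normalization $k^{-p/2}\Ent^p_{k,\hat m}$ from the theorem's statement, although the hyperbolic computation $\Ent^p_k(X,h_0)=k^{-p/2}(1-k)/2\pi$ shows that the $p$-independent (and hence comparable) quantity is $k^{+p/2}\Ent^p_{k,\hat m}$.
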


These results give the first counting result of closed $k$-surfaces according to their energy, which we view as partially addressing Labourie's original question \cite[Question (ii) p.243]{LabourieInvent}.

We note that the previous two results continue to hold with the modified entropies replaced by the standard entropies that count $k$-surfaces whose limit sets become more and more round but without an equidistribution condition, under a topological condition on $X$: that $X$ contain no closed totally geodesic surfaces in its hyperbolic metric (see Question \ref{question:intro}, Remark \ref{remarkattheend}). Examples of closed hyperbolic 3-manifolds without closed totally geodesic surfaces are given in \cite{MR03}. Let us note that many hyperbolic knot complements have no totally geodesic surfaces: see \cite{Basilio_Lee_Malionek,Reid} and references therein.

\subsection{Background and Future Directions}

\subsubsection{Marked length spectrum and domination}

Our work is motivated in part by the theory developed around \textit{marked length spectrum rigidity.} Recall that, for any closed $3$-dimensional Riemannian manifold $(X,h)$ with negative sectional curvature, we define its marked length spectrum to be the function $\text{MLS}_h$ which associates to every conjugacy class of $\Pi=\pi_1(X)$ the length of its geodesic representative. The following conjecture (see \cite[Problem 3.1]{BurnsKatok}) remains open.
\begin{conjecture}[Rigidity of the marked length spectrum]
Let $h_1$ and $h_2$ be two Riemannian metrics with negative sectional curvature on a closed manifold $X$. Then $\mathrm{MLS}_{h_1}=\mathrm{MLS}_{h_2}$ if and only if $h_1$ and $h_2$ are isometric, that is, if and only if there exists a diffeomorphism $\phi:X\to X$ such that $\phi^\ast h_2=h_1$.
\end{conjecture}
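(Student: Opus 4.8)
The plan is to reduce the equality of marked length spectra to a dynamical conjugacy and then to reconstruct the metric, following the two strategies available in the literature; I flag at the outset that the global statement in dimension $\geq 3$ is open, so the reconstruction step is exactly where any honest attempt stalls. Both metrics $h_1,h_2$ have negative curvature, so the universal cover $\widetilde X$ carries two Hadamard metrics whose boundaries at infinity are each $\Pi$-equivariantly homeomorphic to the Gromov boundary $\partial_\infty\Pi$; identifying them, the two geodesic flows on the unit tangent bundles $S_{h_1}X$ and $S_{h_2}X$ acquire a common orbit space, namely the space of ordered pairs of distinct boundary points. The first step is to upgrade $\mathrm{MLS}_{h_1}=\mathrm{MLS}_{h_2}$ to a time-preserving conjugacy of these flows: the common boundary yields an orbit equivalence matching periodic orbits in the same free homotopy class, the hypothesis says that matched closed orbits have equal periods, and the Liv\v{s}ic theorem promotes this orbit equivalence to an honest time-preserving conjugacy.

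The second step is to extract the metric from this conjugacy. In dimension two this is Otal's theorem: the time-preserving conjugacy determines the marked cross-ratio on $\partial_\infty\Pi$, hence the Liouville current, and equality of Liouville currents for two negatively curved surfaces forces the metrics to be isometric via a diffeomorphism isotopic to the identity. I would try to import this mechanism essentially verbatim, so that the two-dimensional case is recovered cleanly.

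In dimension three and higher this boundary argument breaks down, because the space of geodesics has dimension $2(\dim X-1)$ and the cross-ratio on $\partial_\infty\Pi$ no longer determines the metric; this is the essential reason the conjecture is open. The natural substitute is the infinitesimal problem. Given a smooth family $(h_t)$ with $\mathrm{MLS}_{h_t}$ constant, differentiating the length functional along closed geodesics expresses the constraint as the vanishing of the geodesic X-ray transform $I_2$ of the symmetric $2$-tensor $\dot h_t$. The $s$-injectivity of $I_2$ on negatively curved, indeed Anosov, manifolds (Croke--Sharafutdinov, Paternain--Salo--Uhlmann, Guillarmou) then forces $\dot h_t$ to be a Lie derivative of $h_t$, i.e. the infinitesimal deformation is trivial. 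This yields deformation rigidity, and when $h_1$ and $h_2$ are assumed a priori $C^2$-close it can be integrated, via the microlocal methods of Guillarmou--Lefeuvre, into a genuine local rigidity theorem.

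The decisive obstacle is globalization. The infinitesimal argument only constrains deformations, and there is no known procedure for joining two arbitrary metrics with a common marked length spectrum by a path of metrics sharing that spectrum, so the X-ray computation cannot be integrated without a closeness assumption. The alternative rigidity results that avoid closeness, such as those of Hamenst\"adt and Besson--Courtois--Gallot, all require the reference metric to be locally symmetric and exploit the rigidity of the symmetric space in an essential way. Producing a global argument in dimension $\geq 3$ that assumes neither closeness nor local symmetry is precisely the open content of the conjecture, and I would not expect any of the steps above to close this gap without a genuinely new idea.
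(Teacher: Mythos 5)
This statement is labelled a \emph{conjecture} in the paper, and the paper offers no proof of it: the authors explicitly state that it remains open (it is the Burns--Katok marked length spectrum rigidity problem) and cite only the known partial results --- Otal for surfaces, the case where one metric has constant curvature (Hamenst\"adt, Besson--Courtois--Gallot), and the case of close metrics (Guillarmou--Lefeuvre). You correctly recognized that the statement is open rather than attempting to manufacture a proof, and your survey of the available strategies --- Liv\v{s}ic plus the boundary/Liouville-current argument in dimension two, $s$-injectivity of the X-ray transform on symmetric $2$-tensors for deformation and local rigidity, and the locally symmetric techniques --- matches the partial results the paper itself lists, together with an accurate identification of the globalization step as the genuine obstruction. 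There is nothing to compare against, since no proof exists in the paper or in the literature; your assessment is consistent with the paper's own framing.
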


This conjecture has been proved for surfaces by Otal \cite{Otal} (see also \cite{Croke} for results in non-positive curvature). In dimension $3$ it has been established in two particular cases
\begin{enumerate}
\item if one of the metrics has constant sectional curvature: see \cite{besson1996minimal,HamenstadtMLS};
\item if the two metrics are close enough: see \cite{GuillarmouLefeuvre}.
\end{enumerate}

Recently, some variations on the marked length spectrum rigidity problem have been considered. For example there is the problem of understanding the consequences of the property of \emph{domination} of marked length spectra $\text{MLS}_h\geq\text{MLS}_{h'}$. Conjecturally this should imply that for two metrics $h$ and $h'$ of negative sectional curvatures, with the sectional curvatures of $h$ bounded below by those of $h'$, that $\Vol(X,h)\geq\Vol(X,h')$ (see \cite{Croke_Dairbekov,Croke_Dairbekov_Sharafutdinov} as well as \cite{Gogolev_Reber} for related rigidity results). This domination property was studied in the context of representation of surface groups in \cite{Deroin_Tholozan,Gueritaud_Kassel_Wolff} and later generalized to other contexts, such as harmonic maps, higher Teichm\"uller theory and partial hyperbolicity \cite{Alvarez_Yang,Barman_Gupta,Dai_Li,Li,Sagman}.

A direct analogue of marked length spectrum rigidity for $k$-surfaces is marked energy spectrum rigidity for $\pi_1$-injective $k$-surfaces, that is, where we consider the function which assigns to each conjugacy class of surface subgroups of $\Pi$ the energy of the corresponding $k$-surface in the given negatively curved metric on $X$. Establishing marked energy spectrum rigidity even infinitesimally is an interesting and hard problem. If instead of the energy, one considers the map which assigns to each conjugacy class of surface subgroups of $\Pi$ the quotient of the $p$ and $q$-energies of the corresponding $k$-surface, for some $p \neq q$, then Theorem \ref{thm.asymptotic_spectra} implies the corresponding marked spectral rigidity statement for metrics of negative sectional curvature, one of which has constant curvature.

\subsubsection{Thermodynamical formalism for $k$-surfaces}

The marked length spectrum rigidity and domination results for the geodesic flow fit into the theory of Hölder cocycles developed by Ledrappier and Hamenst\"adt \cite{HamenstadtMLS,Ledrappier_bord}. In this theory, one associates two objects to every real-valued Hölder function $F$ over the sphere bundle $S_hX$, henceforth called a \emph{potential}. The first is a function, called the \emph{spectrum}, mapping every conjugacy class of $\Pi$ to the integral of $F$ over its unique geodesic representative, and the second is a probability measure, defined by the \emph{variational principle}, and invariant under the geodesic flow, which we refer to as an \emph{equilibrium state}. By Liv\v{s}ic's theorem (see \cite{Guillemin_Kazhdan,Livsic}), up to a suitable normalization, two potentials have the same periods if and only if they are cohomologous, which in turn holds if and only if they have the same equilibrium states.

It is classical to consider three potentials in $S_hX$: the constant potential (associated to the maximal entropy measure), the geometric potential (associated to the Liouville measure) and the harmonic potential (associated to the harmonic measure). In dimension $2$, it is known (see \cite{Katok_entropy_closed,Katok4,Ledrappier_har_BM}) that if any two of these three potentials have the same periods then the ambient metric has constant curvature (the only known results in higher dimension hold for perturbations of hyperbolic metrics and are due to Flaminio \cite{Flaminio} and Humbert \cite{humbert2024katok}).

It is natural to ask whether there is a similar theory of thermodynamical formalism applying to $k$-surfaces containing Theorems \ref{thm.asymptotic_spectra} and \ref{equality_energy_area}. Indeed, a similar question is raised by Labourie in \cite[Question 8]{Labourie_phase_space}. A concrete starting point for such a theory would be the analogue of Liv\v{s}ic's theorem in the context of $k$-surfaces.

\begin{question}
Let $X$ be a closed negatively curved manifold, let $F$ be a Hölder function defined on the space of marked frame bundles $FS$ of quasi-Fuchsian $k$-surfaces $S$ of $X$. Assume that for every quasi-Fuchsian $k$-surface $S$ the integral of $F$ over $FS$ vanishes. What can be concluded about $F$?
\end{question}

This question seems to belong to a cohomological theory of $\PSL$-actions, though it is not immediately clear what should be considered as a coboundary in this context. One of the main difficulties is the absence in the case of $k$-surfaces of a theory of hyperbolic dynamics, which has proven invaluable in the study of the geodesic flow. A key challenge lies in identifying what may serve as a substitute.

The main respect in which our work is less general than the setting proposed by Labourie's question is that we focus on quasi-Fuchsian surfaces which, in particular, are $\pi_1$-injective. It would be interesting to determine the extent to which the results of this paper hold for entropy functionals defined using larger families of compact $k$-surfaces.

\subsubsection{Closed totally umbilical surfaces}

Finally we state a question whose affirmative answer would allow us to extend the results of Section \ref{sss.Modified_entropy} to more general entropy functionals.

\begin{question} \label{question:intro}
Let $X$ be a closed hyperbolizable riemannian 3-manifold with negative sectional curvature greater than or equal to $-1$. Suppose that $X$ contains a closed totally umbilical surface $S$ with mean curvature $k^{1/2}$ and with constant curvature $-1+k$ in its intrinsic metric (that is, the sectional curvature of $X$ through every tangent plane of $S$ is equal to $-1$). Does it follow that $X$ is hyperbolic?
\end{question}

\subsection*{Acknowledgements}

{\footnotesize S.A. was supported by CSIC, via the Grupo I+D 149 ``Geometr\'ia y acciones de Grupos''. He was also supported by the IRL-2030 IFUMI, Laboratorio del Plata. B.L. was supported by the NSF under grant DMS-2202830. We wish to thank the anonymous referee for his/her valuable comments}.

\section{Foliated Plateau problems and invariant measures}

\subsection{Foliated Plateau problems and actions of $\PSL$}\label{ss.as_Plateau}

\subsubsection{Setting and definitions}\label{ss.as_Plateau_definitions} Throughout the sequel, $X$ will denote a closed, connected, hyperbolic $3$-manifold, $h_0$ will denote its hyperbolic metric, $\tilde{X}$ will denote its universal cover, and $\Pi$ will denote its fundamental group. We identify $\tilde{X}$ with hyperbolic $3$-space $\Hyp$, and we identify $\Pi$ with a discrete subgroup of $\Isom^+(\Hyp)=\PSLc$. $\tilde{X}=\Hyp$ is naturally compactified by its ideal boundary $\bord\Hyp$, which in turn identifies with the Riemann sphere $\CP$. We thus denote this compactification by $\tilde{X}\cup\CP$.

By Mostow's rigidity theorem \cite{Mostow}, $h_0$ is unique up to diffeomorphism. Our aim is to characterize this metric within the space of riemannian metrics on $X$ of sectional curvature pinched between $-b$ and $-a$, for $0<a<b$. Let $h$ be such a riemannian metric, and let $h$ also denote its lift to the universal cover $\tilde{X}$.

We will be interested in the following two bundles over $\tilde{X}$.
\begin{itemize}
\item \emph{The unit sphere bundle} $S_h\tilde{X}\dans T\tilde{X}$, whose fibre at the point $x$ is
\begin{equation*}
S_h\tilde{X}_x:=\{v\ |\ \|v\|_h=1\}\ .
\end{equation*}
\item \emph{The frame bundle} $F_h\tilde{X}\dans T\tilde{X}\oplus T\tilde{X}\oplus T\tilde{X}$, whose fibre at the point $x$ is
\begin{equation*}
F_h\tilde{X}_x:=\{\xi:=(v,e_1,e_2)\ |\ (v,e_1,e_2)\ \text{oriented and $h$-orthonormal}\}\ .
\end{equation*}
\end{itemize}
There is natural projection $p:F_h\tilde{X}\rightarrow S_h\tilde{X}$ given by
\begin{equation*}
p(v,e_1,e_2):=v\ ,
\end{equation*}
making $F_h\tilde{X}$ into a bundle over $S_h\tilde{X}$ with fibre $\Bbb{S}^1$. Furthermore, since $\Pi$ preserves $h$, it acts on both $S_h\tilde{X}$ and $F_h\tilde{X}$ by differentials of isometries, and the projection $p$ is equivariant with respect to this action. We denote by $S_hX$ and $F_hX$ the respective quotients of these bundles, and we observe that they are respectively the unit sphere bundle and the unit frame bundle of $(X,h)$.

Given an oriented, embedded disk $D\dans\tilde{X}$, with unit normal vector field $n:D\rightarrow S_h\tilde{X}$, we define its \emph{Gauss lift} $\hat{D}\dans S_h\tilde{X}$ by
\begin{equation*}
\hat{D} := \{n(p)\ |\ p\in D\}\ .
\end{equation*}
Given a point $p\in D$, we define a \emph{frame} of $D$ at $p$ to be a triple $\xi:=(n(p),e_1,e_2)$, where $(e_1,e_2)$ is an oriented, orthonormal pair of tangent vectors of $D$ at $p$. We define the \emph{frame bundle} $FD\dans F_h\tilde{X}$ by
\begin{equation*}
FD := \{\xi(p)\ |\ \xi\ \text{a frame of $D$ at some point $p$}\}\ .
\end{equation*}
The Gauss lift is naturally diffeomorphic to $D$, whilst the frame bundle is naturally diffeomorphic to the unit circle bundle $SD$.

We define a \emph{marked disk} in $\tilde{X}$ to be a pair $(D,p)$ where $D\dans\tilde{X}$ is an oriented disk, and $p\in D$. It will be convenient to identify each marked disk $(D,p)$ with its marked \emph{Gauss lift} $(\hat{D},n(p))$. Likewise, we define a \emph{marked frame bundle} of a disk $D$ to be a pair $(FD,\xi)$ where $FD$ is its frame bundle, and $\xi:=(n(p),e_1,e_2)$ is a frame of $FD$ based at some point $p\in D$.

\subsubsection{The space of $k$-disks}\label{sss.space_k_discs} Fix $0<k<a$. We define a \emph{$k$-disk} in $(\tilde{X},h)$ to be a smoothly embedded, oriented disk $D\dans \tilde{X}$, whose extrinsic curvature with respect to $h$ is constant and equal to $k$, and whose closure in $\tilde{X}\cup\CP$ is a closed topological disk. Note that since $D$ has positive extrinsic curvature, it is locally strictly convex, and we choose its orientation such that its unit normal vector field $n$ points outwards from the convex set that it bounds. In particular, with this convention, its mean curvature $H$ is everywhere positive. Let $\KD^+_{h,k}$ denote the space of all \emph{oriented} $k$-disks in $(\tilde{X},h)$, endowed with the $C^\infty_{loc}$-topology.

Recall that the \emph{intrinsic} and \emph{extrinsic curvatures} of any immersed surface $D$ in $(\tilde{X},h)$ are related by \emph{Gauss' equation}
\begin{equation}
\kappa_{int}=\kappa_{ext}+\sect_h|_{TD}\ .
\end{equation}
In particular, the intrinsic curvature of any $k$-disk satisfies
\begin{equation}\label{eq_intr_curv}
k-b\leq\kappa_{int}\leq k-a<0\ ,
\end{equation}
so that every $k$-disk is conformally of hyperbolic type.

\subsubsection{Asymptotic Plateau problem}\label{sss.asymptotic_Plateau_problem} We denote by $\JC^+$ the space of all oriented Jordan curves in $\CP$ endowed with the Hausdorff topology. Given $c\in\JC^+$, we say that $c$ \emph{spans} a $k$-disk $D$ whenever $\bord D=c$, with $\bord D$ furnished with the orientation that it inherits from $D$. We will require the following result from \cite[Theorem \& Definition 2.1.1]{ALS}, which is a corollary of the main results of \cite{LabourieInvent} (in the geometrically finite case) and \cite{Smith_asymp} (in the general case).

\begin{theorem}[Asymptotic Plateau problem]\label{th.As_plateau_problem}
For every $0<k<a$, and for every $c\in\JC^+$ there exists a unique $k$-disk $D:=\rD_{k,h}(c)$ which spans $c$. Furthermore, the function $c\in\JC^+\mapsto\rD_{k,h}(c)\in\KD^+_{k,h}$ is continuous.
\end{theorem}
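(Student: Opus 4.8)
The plan is to establish existence by a compactness-and-exhaustion argument resting on uniform a priori estimates, to prove uniqueness via the geometric maximum principle, and then to deduce continuity from uniqueness together with the same compactness. \textbf{A priori estimates and compactness.} The starting point is that every $k$-disk is locally strictly convex with constant extrinsic curvature $k$, so that the defining condition is a fully nonlinear elliptic (Monge--Amp\`ere-type) equation for the local graphing or support function. Combined with the pinching of the intrinsic curvature recorded in \eqref{eq_intr_curv}, local convexity yields uniform bounds on the second fundamental form on compact subsets of $\tilde X$, and elliptic regularity upgrades these to uniform $C^\infty_\loc$ bounds. Hence any family of $k$-disks meeting a fixed compact set is precompact in the $C^\infty_\loc$-topology, which is the essential input for both existence and continuity.

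\textbf{Barriers, localization, and existence.} I would next confine the putative solutions to a controlled region, using geodesic spheres and the comparison surfaces supplied by the pinching $-b\leq\sect_h\leq-a$ as barriers. This shows that any $k$-disk spanning $c$ is trapped in a bounded neighborhood of the convex hull of $c$, with its ideal boundary pinned to $c$; this confinement prevents solutions and contact points from escaping to $\CP$. For existence itself, I would approximate $c\in\JC^+$ by a monotone family of smooth Jordan curves, solve the Plateau problem for smooth boundary data by a continuity or degree argument (kept open and closed by the estimates above), and pass to the limit: the compactness of the first step produces a limiting $k$-disk, and the localization guarantees that its ideal boundary is exactly $c$.

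\textbf{Uniqueness.} Given two $k$-disks $D_1,D_2$ spanning the same $c$, I would invoke the tangency principle: two locally strictly convex surfaces of the same constant extrinsic curvature that meet tangentially at an interior point with matching coorientation, one lying locally to one side of the other, must coincide. The crux is to force such an interior tangency. Since $\bord D_1=\bord D_2=c$, the signed separation of the two surfaces tends to $0$ along the ideal boundary, so its supremum is either $0$ (in which case we are done) or attained at an interior point. Ruling out that the supremum is merely approached at infinity is exactly where the confinement of the previous step and the strict negativity of the intrinsic curvature ($k<a$ in \eqref{eq_intr_curv}) enter, giving enough asymptotic control to place the extremal contact in the interior; the maximum principle then yields $D_1=D_2$. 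I expect this step to be the principal obstacle, as guaranteeing that the tangency cannot escape to $\CP$ requires the careful barrier geometry and the full strength of the pinching hypotheses.

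\textbf{Continuity.} Finally, for $c_n\to c$ in $\JC^+$, the disks $\rD_{k,h}(c_n)$ are confined and hence precompact, so every subsequence admits a further subsequence converging in $C^\infty_\loc$ to a $k$-disk; its ideal boundary is $c$ by Hausdorff convergence of the boundaries, so by uniqueness the limit equals $\rD_{k,h}(c)$. As every subsequence subconverges to this same limit, the full sequence converges, establishing that $c\mapsto\rD_{k,h}(c)$ is continuous.
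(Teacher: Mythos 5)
First, a point of orientation: the paper does not prove this statement. It is quoted from \cite[Theorem \& Definition 2.1.1]{ALS}, where it is obtained as a corollary of the main results of \cite{LabourieInvent} (geometrically finite case) and \cite{Smith_asymp} (general case). So there is no in-paper argument to compare yours against; your sketch has to stand in for those external proofs, and its overall architecture (compactness and barriers for existence, maximum principle for uniqueness, uniqueness plus compactness for continuity) is indeed the right one. The continuity paragraph in particular is essentially complete \emph{modulo} the earlier steps.

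The genuine gap is in the first step. It is not true that local strict convexity together with the intrinsic pinching \eqref{eq_intr_curv} yields uniform bounds on the second fundamental form of $k$-disks meeting a fixed compact set: the equation $\kappa_{ext}=\lambda_1\lambda_2=k$ permits the principal curvatures to degenerate as $(\lambda,k/\lambda)$ with $\lambda\to\infty$ along a sequence of $k$-disks, and elliptic regularity gives you nothing until this is excluded. Controlling this degeneration is precisely the content of Labourie's compactness theorem for $k$-surfaces, and the correct statement is weaker than the one you assert: a sequence of marked $k$-disks subconverges, after passing to Gauss lifts, either to a $k$-disk or to a degenerate limit supported over a complete geodesic, whose ideal boundary is a singleton or a pair of points. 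Excluding the degenerate alternative when the boundary data is a genuine Jordan curve is the role of Labourie's Morse lemma for convex surfaces --- the same mechanism this paper invokes via \cite[Proposition 2.5.3]{LabourieInvent} in the proof of Lemma \ref{l.almost_tot_umbilical} to rule out a $k$-surface with singleton ideal boundary. Without this input, your compactness claim, and with it both the existence and the continuity arguments, does not close. A secondary issue: for an arbitrary $c\in\JC^+$, which is merely a topological Jordan curve (possibly non-rectifiable), the proposed ``monotone smooth approximation plus continuity/degree method'' for existence is itself a substantial theorem rather than a routine openness-and-closedness argument; this is essentially why the general case required \cite{Smith_asymp}. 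You are right, finally, that preventing the tangency point from escaping to $\CP$ is the delicate point of the uniqueness step, but flagging the difficulty is not the same as resolving it; the known resolutions use the convexity of $k$-disks and a sliding argument along equidistant surfaces, whose extrinsic curvature is controlled by the pinching of $\sect_h$.
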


\subsubsection{Quasicircles}\label{sss.PSL_quasicircles} Let $C\geq 1$. Recall that a Jordan curve $c$ in $\CP$ is a $C$-\emph{quasicircle} whenever it is the image of the real projective line $\RP$ under the action of some $C$-quasiconformal map.\footnote{We refer the reader to Ahlfors' book \cite{Ahlfors} for the basic properties of quasiconformal maps.} Let $\QC^+(C)$ denote the space of oriented $C$-quasicircles. In what follows, we will use the following properties of quasicircles and quasiconformal maps.
\begin{enumerate}
\item The composition of a $C$-quasiconformal map with a M\"obius transformation, on the left or the right, is also a $C$-quasiconformal map, and the image of any $C$-quasicircle under a Möbius transformation is also a $C$-quasicircle;
\item $1$-quasiconformal maps are Möbius transformations and $1$-quasicircles are round circles of $\CP$;
\item Any closed subset of $\QC^+(C)$ is compact if and only if it does not accumulate on a singleton.
\end{enumerate}
In particular, by $(1)$, for all $C$, the group $\Pi$ sends $\QC^+(C)$ to itself.

\subsubsection{Space of $k$-disks spanned by quasicircles}
For $C\geq 1$ and $0<k<a$, we will use the following two spaces.
\begin{itemize}
\item The space $\FKD_{k,h}(C)$, formed of all marked frame bundles $(FD,\xi)$, where $D$ is an oriented $k$-disk spanned by a $C$-quasicircle and $\xi\in FD$. We furnish this space with the $C^\infty_\loc$-topology.
\item The space $\MKD_{k,h}(C)$, formed of all Gauss lifts $(\hat D,n)$, where $D$ is an oriented $k$-disk spanned by a $C$-quasicircle and $n\in\hat D$. We likewise furnish this space with the $C^\infty_\loc$-topology.
\end{itemize}
$\Pi$ acts on these spaces cocompactly on the left (see \cite{ALS,LabourieGAFA}), and the canonical projection $p:\FKD_{k,h}(C)\to\MKD_{k,h}(C)$ is continuous, surjective, and $\Pi$-equivariant.

We will also use the following boundary maps
\begin{align*}
&\bord:\FKD_{k,h}(C)\to\QC^+(C);(FD,\xi)\mapsto\bord D\ \text{and}\\
&\bord:\MKD_{k,h}(C)\to\QC^+(C);(\hat{D},n)\mapsto\bord D\ .
\end{align*}
In \cite{ALS}, we show that these maps are continuous.

\subsubsection{Laminations, simultaneous uniformization, and the action of $\PSL$}
\label{sssec:LaminationsSimultaneousUniformization}
Since $k$-disks have intrinsic curvature bounded above by $k-a<0$ (see \S \ref{sss.space_k_discs}), they are uniformized by the hyperbolic plane $\H2$. The space $\text{MKD}_{k,h}(C)$ therefore carries a natural lamination by hyperbolic surfaces, whose leaves are the preimages of $\bord$, and which we denote by $\cL_{k,h}$. Since $\Pi$ sends leaves to leaves, the quotient $\text{MKD}_{k,h}(C)/\Pi$ also carries a hyperbolic surface lamination which, furthermore, is compact (see \cite{LabourieGAFA}). We denote this lamination also by $\cL_{k,h}$ when no ambiguity arises. In a similar manner, the space $\FKD_{k,h}(C)$ carries a natural lamination by $3$-manifolds, whose leaves are likewise the preimages of $\bord$, and which we denote by $F\cL_{k,h}$. We also denote the natural lamination of the quotient $\FKD_{k,c}(C)/\Pi$ by $F\cL_{k,h}$ when no ambiguity arises. Finally, note that, in both cases, the leaves of $F\cL_{k,h}$ are the unit circle bundles of the leaves of $\cL_{k,h}$.

There is a natural right action of $\PSL$ on $\FKD_{k,h}(C)$ which we now describe. We first show how every frame $\xi$ in $\FKD_{k,h}(C)$ naturally identifies with a uniformizing map of the leaf of $\MKD_{k,h}(C)$ which passes through its base-point. Indeed, identifying $\H2$ with the upper half-space in $\Bbb{C}$ in the standard manner, we define the oriented frame $(e_1^0,e_2^0)$ of this space at the point $i:=\sqrt{-1}$ by
\begin{equation*}
e_1^0 := 1\qquad\text{and}\qquad e_2^0 := i\ .
\end{equation*}
Given a $k$-disk $D$ and a frame $\xi:=(n(p),e_1,e_2)\in FD$, we now define $u_\xi:\H2\rightarrow D$ to be the unique conformal map such that
\begin{equation*}
u_\xi(i)=p\qquad\text{and}\qquad Du_\xi(e_1^0)=\lambda e_1\,\,\text{for some }\lambda>0\ .
\end{equation*}
The right action of $\PSL$ is now defined as follows. For all $g\in\PSL$, there exists a unique function $\alpha_k(g):\FKD_{k,h}(C)\rightarrow\FKD_{k,h}(C)$ such that, for every $k$-disk $D$, and for every frame $\xi:=(n(p),e_1,e_2)\in FD$,
\begin{equation*}
u_\xi\circ g = u_{\alpha_k(g)(\xi)}\ .
\end{equation*}
More explicitly,
\begin{equation*}
\alpha_k(g)(\xi) = (n(q),e_1',e_2')\ ,
\end{equation*}
where
\begin{equation*}
q := u_\xi(g(0))\qquad\text{and}\qquad e_1'=\lambda Du_\xi\cdot Dg\cdot e_1^0\,\,\text{for some }\lambda>0.
\end{equation*}
It is straightforward to show that this action of $\PSL$ on $\FKD_{k,h}(C)$ is free, transitive on each leaf, and commutes with the left action of $\Pi$. Furthermore, by Candel's simultaneous uniformization theorem  (in particular, the version given in \cite[Th\'eor\`eme 2.7]{AlvarezSmith}), the correspondence $\xi\mapsto u_\xi$ varies smoothly over every fibre, and continuously in the $C^\infty_{\mathrm{loc}}$ sense transversally to every fibre. In particular,  $\alpha_k(g)$ is a homeomorphism for all $g$, and the action is smooth over every fibre, and varies continuously in the $C^\infty_{\mathrm{loc}}$ sense transversally to every fibre. Thus this action is continuous.

Using this $\PSL$-action, we proved in \cite[Theorems 3.1.2. and 3.2.2]{ALS} the following result.

\begin{theorem}[Fibered Plateau problem]\label{t.fibered_PP}
For all $C\geq 1$, the map
\begin{equation*}
\bord :\FKD_{k,h}(C)\to\mathrm{QC}^+(C)\ ,
\end{equation*}
is a continuous principal $\PSL$-bundle over $\QC^+(C)$, and the map
\begin{equation*}
\bord :\mathrm{MKD}_{k,h}(C)\to\mathrm{QC}^+(C)\ ,
\end{equation*}
is a continuous disk-bundle, which is an associated bundle of $\FKD_{k,h}(C)$.
\end{theorem}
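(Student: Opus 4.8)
The plan is to verify the three defining properties of a continuous principal $\PSL$-bundle — a free continuous action, fibres that are single orbits, and local triviality — using as inputs the uniqueness and continuity statement of Theorem \ref{th.As_plateau_problem}, the continuity of $\bord$ established in \cite{ALS}, and the structural properties of the $\PSL$-action recorded in \S\ref{sssec:LaminationsSimultaneousUniformization}. Two of these three are essentially already in hand: the map $\bord$ is continuous, surjective (every $c\in\QC^+(C)$ spans a $k$-disk carrying frames), and $\PSL$-invariant, since each $\alpha_k(g)$ acts within a single leaf and hence preserves the spanning quasicircle; moreover the action is free, and since the fibres $\bord^{-1}(c)$ are exactly the leaves of $F\cL_{k,h}$ on which the action is transitive, each fibre is a single free orbit, homeomorphic to $\PSL$. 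The entire content therefore lies in establishing \emph{local triviality}, for which it suffices to produce a continuous local section of $\bord$ over a neighbourhood of each $c_0\in\QC^+(C)$.

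To build such a section, fix $c_0$, let $D_0:=\rD_{k,h}(c_0)$, and choose a base point $p_0\in D_0$ together with a frame $\xi_0=(n(p_0),e_1,e_2)\in FD_0$. Let $\gamma$ be a short geodesic arc through $p_0$ in the direction $n(p_0)$, so that $\gamma$ is transverse to $D_0$. By Theorem \ref{th.As_plateau_problem}, the assignment $c\mapsto\rD_{k,h}(c)$ is continuous in the $C^\infty_\loc$-topology; the main subtlety is that this control is only local, so one localises near $p_0$. For $c$ in a small neighbourhood $U$ of $c_0$, the disk $\rD_{k,h}(c)$ is $C^1$-close to $D_0$ near $p_0$ and hence meets $\gamma$ transversally in a unique point $p(c)$ depending continuously on $c$, with $p(c_0)=p_0$. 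At $p(c)$ the outward normal and the tangent plane of $\rD_{k,h}(c)$ vary continuously with $c$; projecting $e_1$ orthogonally onto this tangent plane and orthonormalising (well defined on $U$ after shrinking) yields a continuous oriented frame $\sigma(c)\in F\rD_{k,h}(c)$ with $\bord(\sigma(c))=c$.

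With $\sigma$ in hand, define $\Phi:U\times\PSL\to\bord^{-1}(U)$ by $\Phi(c,g):=\alpha_k(g)(\sigma(c))$. It is continuous because $\sigma$ and the $\PSL$-action are continuous, it is $\PSL$-equivariant by construction, and for each fixed $c$ it restricts to a bijection $\PSL\to\bord^{-1}(c)$ by freeness and fibrewise transitivity. The delicate point — and the second half of the main obstacle — is the continuity of $\Phi^{-1}$, where the uniformising maps do the work: for $\xi\in\bord^{-1}(c)$ both $u_{\sigma(c)}$ and $u_\xi$ are conformal parametrisations $\H2\to\rD_{k,h}(c)$ of the \emph{same} disk, so $g(\xi):=u_{\sigma(c)}^{-1}\circ u_\xi$ is a conformal automorphism of $\H2$, i.e. an element of $\PSL$, and the defining relation $u_{\alpha_k(g)(\xi)}=u_\xi\circ g$ gives $\alpha_k(g(\xi))(\sigma(c))=\xi$. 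Since $\xi\mapsto u_\xi$ varies continuously in the $C^\infty_\loc$-sense (Candel's simultaneous uniformisation theorem, as recorded in \S\ref{sssec:LaminationsSimultaneousUniformization}) and $\bord,\sigma$ are continuous, the map $\xi\mapsto(\bord(\xi),g(\xi))$ is a continuous inverse to $\Phi$. Thus $\Phi$ is a $\PSL$-equivariant homeomorphism and $\bord:\FKD_{k,h}(C)\to\QC^+(C)$ is a continuous principal $\PSL$-bundle.

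Finally, the statement for $\MKD_{k,h}(C)$ follows formally. The projection $p:\FKD_{k,h}(C)\to\MKD_{k,h}(C)$ forgets the tangential part of the frame, and its fibres are precisely the orbits of the right action of the rotation subgroup $K:=\mathrm{Stab}_{\PSL}(i)\cong\mathrm{SO}(2)$; indeed, for $g\in K$ one has $g(i)=i$, so $\alpha_k(g)$ fixes the base point and merely rotates $(e_1,e_2)$. Hence $\MKD_{k,h}(C)=\FKD_{k,h}(C)/K$ and $\bord$ is the induced quotient map with fibre $\PSL/K\cong\H2$, an open disk. Pushing each trivialisation $\Phi$ forward through $p$ yields trivialisations $U\times(\PSL/K)\to\bord^{-1}(U)$ exhibiting $\bord:\MKD_{k,h}(C)\to\QC^+(C)$ as the disk bundle associated to the principal bundle $\FKD_{k,h}(C)$, as claimed.
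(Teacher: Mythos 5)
Your argument is correct, but note that the paper itself gives no proof of this statement: it is quoted from \cite[Theorems 3.1.2 and 3.2.2]{ALS}, so there is no in-text proof to compare against. What you write is a sound self-contained reconstruction using exactly the ingredients the paper assembles beforehand (uniqueness and $C^\infty_{\loc}$-continuity of the asymptotic Plateau solution, freeness and leafwise transitivity of $\alpha_k$, and Candel's simultaneous uniformization), and you correctly identify and handle the two genuinely nontrivial points: producing a continuous local section by intersecting nearby $k$-disks with a short normal transversal, and proving continuity of the inverse trivialization via the division map $\xi\mapsto u_{\sigma(c)}^{-1}\circ u_\xi\in\PSL$. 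The only place deserving an extra line is the claim that $\rD_{k,h}(c)$ meets $\gamma$ in a \emph{unique} point: you should note that $C^\infty_{\loc}$-convergence of embedded $k$-disks to $D_0$ forces $\rD_{k,h}(c)$ to be, inside a fixed small ball about $p_0$, a single graph over $D_0$ (no extra sheets accumulating), which is what rules out further intersections with the arc; this follows from the way the topology on $\KD^+_{k,h}$ is set up but is worth saying. The reduction of the $\MKD_{k,h}(C)$ statement to the quotient by $K=\mathrm{Stab}_{\PSL}(i)\cong\mathrm{SO}(2)$ is also correct.
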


\subsubsection{Foliated Plateau problem}\label{sss.FPP} Consider now the case where $C=1$. Let $\cC^+=\text{QC}^+(1)$ denote the set of oriented round circles in $\CP$. In this case, the spaces $\FKD_{k,h}(1)$ and $\MKD_{k,h}(1)$ identify respectively with the frame bundle $F_h\tilde{X}$ and the sphere bundle $S_h\tilde{X}$, as the following theorem, proven in \cite[Theorem 2.1.3]{ALS}, shows.

\begin{theorem}[Foliated Plateau problem]\label{th_foliated_Plateau} The Gauss lifts of oriented $k$-disks spanned by round circles of $\CP$ form a continuous foliation $\cF_{k,h}$ of $S_h\tilde{X}$, and their frames form a continuous foliation $F\cF_{k,h}$ of $F_h\tilde{X}$. Furthermore, these foliations are $\Pi$-invariant.
\end{theorem}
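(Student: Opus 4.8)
The plan is to reduce the statement to showing that the natural map
\[
\iota:\MKD_{k,h}(1)\to S_h\tilde{X},\qquad (\hat D,n)\mapsto n,
\]
which remembers only the based unit normal vector and forgets the disk, is a homeomorphism. Granting this, the Gauss lifts $\hat D$ of $k$-disks spanned by round circles are exactly the fibers of $\bord\circ\iota^{-1}:S_h\tilde X\to\cC^+$ (where $\cC^+=\QC^+(1)$), so they partition $S_h\tilde X$; and since $\bord:\MKD_{k,h}(1)\to\cC^+$ is a continuous disk-bundle by Theorem~\ref{t.fibered_PP}, its local triviality transports through $\iota$ to a local product structure on $S_h\tilde X$, which is precisely a continuous foliation $\cF_{k,h}$. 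The smoothness of leaves and their continuous variation in the $C^\infty_\loc$ sense are inherited from the continuity of $c\mapsto\rD_{k,h}(c)$ in Theorem~\ref{th.As_plateau_problem}.

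To prove $\iota$ is a homeomorphism, note first that it is continuous and that source and target are both $5$-dimensional manifolds: the source is a disk-bundle over the $3$-manifold $\cC^+$ by Theorem~\ref{t.fibered_PP}, while $S_h\tilde X\cong\Hyp\times\Bbb{S}^2$. The crux is \textbf{injectivity}. Suppose two $k$-disks $D_1=\rD_{k,h}(c_1)$ and $D_2=\rD_{k,h}(c_2)$, spanned by round circles $c_1,c_2\in\cC^+$, share a based normal vector, i.e.\ are tangent at a common interior point $x$ with $n_1(x)=n_2(x)$. Writing both locally as graphs over their common tangent plane, each solves the same fully nonlinear, uniformly elliptic (on the locally convex class) constant-extrinsic-curvature equation with datum $k$; the difference of the graphs then solves a linear elliptic equation and vanishes to second order at $x$. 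The geometric strong maximum principle and Hopf lemma force the two surfaces to coincide near $x$, and unique continuation propagates this to $D_1=D_2$, whence $c_1=c_2$. I expect this tangency principle to be the \textbf{main obstacle}, since it is the one step genuinely using the defining PDE rather than the formal bundle structure, and some care is needed to arrange the two surfaces so that one lies locally on one side of the other; here I would exploit that distinct round circles are either nested or transverse to control that local side.

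The remaining input is \textbf{properness} of $\iota$. If a sequence $(\hat D_n,n_n)$ satisfies $n_n\to(x,v)\in S_h\tilde X$, then the base points stay in a fixed compact subset of $\Hyp$; a convexity-and-curvature barrier argument shows that a $k$-disk whose boundary circle is very small lies in a small neighborhood of $\bord\Hyp$, so the circles $c_n=\bord D_n$ cannot accumulate on a point of $\CP$. By property~(3) of quasicircles the family $\{c_n\}$ is then relatively compact in $\cC^+$, and Theorem~\ref{th.As_plateau_problem} lets me extract a limit in $\MKD_{k,h}(1)$, establishing properness. Now invariance of domain makes the injective continuous map $\iota$ open, so its image is open; properness makes the image closed; and connectedness of $S_h\tilde X$ forces $\iota$ to be onto. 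A proper continuous bijection between locally compact Hausdorff spaces is a homeomorphism, which completes the reduction.

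Finally, the frame statement and $\Pi$-invariance are formal. The circle bundle $p:F_h\tilde X\to S_h\tilde X$ satisfies $p^{-1}(\hat D)=FD$ for every $k$-disk $D$, so $F\cF_{k,h}:=p^{\ast}\cF_{k,h}$ is the partition of $F_h\tilde X$ into the frame bundles $FD$; it is a continuous foliation because $\cF_{k,h}$ is one and $p$ is a locally trivial $\Bbb{S}^1$-bundle, and its leaves are the unit circle bundles of the leaves of $\cF_{k,h}$, as claimed. For $\Pi$-invariance, every $g\in\Pi\subset\PSLc$ acts on $\CP$ as a Möbius transformation, hence sends round circles to round circles by property~(2), and acts on $\tilde X$ by an $h$-isometry, hence sends $k$-disks to $k$-disks and Gauss lifts to Gauss lifts. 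Uniqueness in Theorem~\ref{th.As_plateau_problem} gives $g\cdot\rD_{k,h}(c)=\rD_{k,h}(g\cdot c)$, so $\Pi$ permutes the leaves of $\cF_{k,h}$; since $p$ is $\Pi$-equivariant, the same holds for $F\cF_{k,h}$.
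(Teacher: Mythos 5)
The paper does not prove this theorem in situ: it is quoted from \cite[Theorem 2.1.3]{ALS}, so there is no internal proof to compare against line by line. Your overall architecture --- reduce everything to showing that the forgetful map $\iota:\MKD_{k,h}(1)\to S_h\tilde{X}$ is a homeomorphism, prove properness via non-degeneration of the boundary circles, deduce surjectivity from invariance of domain together with properness and connectedness of $S_h\tilde{X}$, and then transport the disk-bundle structure of Theorem \ref{t.fibered_PP} to a foliated atlas --- is sound, and your treatment of properness, of the frame-bundle statement, and of $\Pi$-invariance (via uniqueness in Theorem \ref{th.As_plateau_problem}) is correct.

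The gap is in the injectivity step, which is the mathematical heart of the theorem: it is precisely the assertion that distinct leaves are disjoint. The tangency principle does not apply as you state it. For a second-order elliptic equation, the difference $w=u_1-u_2$ of two solutions tangent at a point satisfies a linear elliptic equation with $w(0)=0$ and $Dw(0)=0$, and this does \emph{not} force $w\equiv 0$; the strong maximum principle and the Hopf lemma require in addition that $w$ have a sign near $0$, i.e.\ that one surface lie locally on one side of the other. Worse, for the constant extrinsic curvature equation this one-sidedness fails automatically whenever the second fundamental forms at the tangency point differ: they are two positive-definite forms with the same determinant (namely $k$ times the determinant of the common first fundamental form), so their difference, if nonzero, is indefinite and the two graphs cross to second order. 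You identify this as the main obstacle, but the proposed remedy (``distinct round circles are either nested or transverse'') is not yet an argument. Relating the configuration of the ideal boundaries to the local side at an interior tangency is itself a global statement: in the nested case one needs to know that $k$-disks bound convex bodies and that one convex body contains the other, after which the one-sided Gauss-curvature comparison does apply; and the case of two round circles meeting in two points is not addressed at all and cannot be settled by a purely local tangency analysis. As written, the disjointness of leaves --- hence the partition of $S_h\tilde{X}$, hence the foliation --- is not established.
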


We call the above foliations the $k$-\emph{surface foliation} of $S_h\tilde{X}$ and the $k$-\emph{frame foliation} of $F_h\tilde{X}$ respectively. By $\Pi$-invariance, they descend respectively to foliations of $S_h X$ and $F_h X$ whose respective leaves have dimension $2$ and $3$. The above theorem has the following consequences.

\begin{enumerate}
\item The laminated space $(\MKD_{k,h}(1),\cL_{k,h})$ identifies with the $k$-surface foliation of the sphere bundle $(S_h\tilde{X},\cF_{k,h})$. This identification is equivariant under the natural left action of $\Pi$.
\item The laminated space $(\FKD_{k,h}(1),F\cL_{k,h})$ identifies with the foliation of the frame bundle $(F_h\tilde{X},F\cF_{k,h})$. This identification is also equivariant by the natural left actions of $\Pi$.
\item $F_h\tilde{X}$ inherits a free right-action of $\PSL$, which we also denote by $\alpha_k$. The orbit foliation of this action identifies with the $k$-frame foliation $F\cF_{k,h}$.
\end{enumerate}

We emphasize the strong analogies between the preceding constructions and those arising from geodesics. Indeed, we view the action of $\PSL$ on $F_h\tilde X$ as a higher-dimensional analogue of the geodesic flow. We likewise view the $k$-surface foliation of $S_h\tilde{X}$ as a higher-dimensional analogue of the geodesic foliation of this manifold. This analogy is further justified, for example, by the fact that the $k$-surface foliation is independent of the metric in the following sense. For $k<a$, if $h$ and $h'$ are two negatively curved metrics on $X$ with sectional curvatures bounded above by $-a<0$, then there exists a canonical homeomorphism of $S_h\tilde{X}$ conjugating the $k$-surface foliations (see \cite[Theorem 2.1.5.]{ALS}). We view this as the analogue in the present context of the \emph{geodesic rigidity theorem} proven by Gromov in \cite{Gromov3}.

\begin{remark}
The analogous foliated Plateau problem for minimal surfaces has a solution for perturbations $h$ of the hyperbolic metric, as well as a slightly larger neighborhood of the hyperbolic metric subject to strong curvature conditions (see, \cite{Gromov_FolPlateau1,LoweGAFA}). However, such foliations do not always exist. Indeed, in \cite{LoweGAFA}, the second author constructs negatively-curved metrics $h$ for which $S_h\tilde{X}$ admits no foliation by lifts of minimal surfaces equivalent to the totally geodesic foliation of the unit tangent bundle of the hyperbolic metric by a leaf-preserving homeomorphism. It remains an open problem to determine necessary and sufficient conditions under which the foliated Plateau problem for minimal surfaces admits a solution.
\end{remark}

\begin{remark}We note that the above laminated spaces are all sublaminations of the laminated space introduced by Labourie in \cite{LabourieGAFA,LabourieInvent,LabourieAnnals} (see also \cite{Smith_asymp}). These sublaminations arise naturally in the study of quasi-Fuchsian $k$-surfaces.
\end{remark}

\subsubsection{The homogeneous case}
In the case where $h=h_0$ is the hyperbolic metric, the foliation $F\cF_{h_0,k}$ of Theorem \ref{th_foliated_Plateau} and the action $\alpha_k$ described in  \S \ref{sssec:LaminationsSimultaneousUniformization} are smoothly conjugated to a straightforward homogeneous model which we now describe. Note first that the natural action of $\Isom^+(\Hyp)\simeq\PSLc$ on the frame bundle $F_{h_0}\tilde{X}$ is free and transitive. Thus, up to a choice of base-frame, $F_{h_0}\tilde{X}\simeq\PSLc$, and right-multiplication defines a natural, free right-action of $\PSL$ on $F_{h_0}\tilde{X}$, which we denote by $\alpha_0$. Note that the orbit of any point $\xi:=(v_p,e_1,e_2)$ under this action is simply the frame bundle $FP$ of the unique, oriented, totally geodesic plane $P$ which passes through $p$ and has outward-pointing normal $v$ at this point. In particular, upon identifying $F_{h_0}\tilde{X}$ with the set of uniformizing maps of oriented, totally geodesic planes of $(\tilde{X},h_0)$, $\alpha_0$ may also be constructed exactly as in \S \ref{sssec:LaminationsSimultaneousUniformization}.

The right action $\alpha_0$ is smoothly conjugated to the right action $\alpha_k$ as follows. Let $FP$ be the frame bundle of an oriented totally geodesic plane $P$. Let $k:=\tanh^2(R)$ and consider the time $R$ map of the \emph{frame flow} $\hat G_R:F_{h_0}\tilde X\to F_{h_0}\tilde X,(p,v,e_1,e_2)\mapsto(G_R(p,v),e_1',e_2')$, where $G_R$ denotes the time $R$ map of the geodesic flow, and $(e_1',e_2')$ denotes the parallel transport of $(e_1,e_2)$ along this flow. The image of the frame bundle $FP$ of the totally geodesic plane $P$ is the frame bundle $FD$ of the $k$-disk $D$ spanned by $\bord P$, and $\hat G_R$ maps $P$ conformally onto $D$. It follows that $\hat G_R$ conjugates $\alpha_0$ and $\alpha_k$. We thus have the following identification.

\begin{theorem}[The homogeneous model]\label{t.homogeneous_model}
The right action $\alpha_k$ of $\PSL$ on $F_{h_0}\tilde{X}$ is smoothly conjugated to the homogeneous action of $\PSL$ on $\PSLc$ by multiplication on the right.
\end{theorem}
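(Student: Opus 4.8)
The plan is to split the claimed conjugacy into two smooth conjugacies, routed through the totally geodesic model: first, that $\alpha_0$ (the right $\PSL$-action on $F_{h_0}\tilde X$ by frames of totally geodesic planes) is literally right multiplication once $F_{h_0}\tilde X$ is identified with $\PSLc$; and second, that $\alpha_k$ is smoothly conjugate to $\alpha_0$ via the frame flow $\hat G_R$.

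For the first conjugacy I would make the torsor identification precise. The group $\PSLc=\Isom^+(\Hyp)$ acts on $F_{h_0}\tilde X$ freely (an orientation-preserving isometry fixing a frame fixes a point together with the derivative there, hence is the identity) and transitively (by homogeneity of $\Hyp$ and transitivity of $\mathrm{SO}(3)$ on oriented orthonormal frames at a point). Fixing a base frame $\xi_0$ lying over the totally geodesic plane $P_0$ with $\bord P_0=\RP$, normalized so that $u_{\xi_0}$ is the standard isometric identification $\H2\simeq P_0$, yields a diffeomorphism $\Psi\colon\PSLc\to F_{h_0}\tilde X$, $g\mapsto g\cdot\xi_0$, intertwining left multiplication with the isometry action. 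Here $\PSL\subset\PSLc$ is exactly the subgroup preserving the oriented plane $P_0$, and $u_{\xi_0}$ conjugates the action of $h\in\PSL=\Isom^+(\H2)$ on the source $\H2$ to the action of $h$ on $\Hyp$ fixing $P_0$. Writing a general frame as $\xi=g\xi_0$ and using the equivariance $u_{g\xi_0}=g\circ u_{\xi_0}$, the defining relation $u_\xi\circ h=u_{\alpha_0(h)(\xi)}$ of \S\ref{sssec:LaminationsSimultaneousUniformization} unwinds to $\alpha_0(h)(\xi)=(gh)\xi_0$; that is, $\Psi^{-1}\circ\alpha_0(h)\circ\Psi$ is right multiplication by $h$.

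For the second conjugacy I would use the frame flow with $R:=\operatorname{arctanh}\sqrt k$, so that $k=\tanh^2R$. The surface equidistant to a totally geodesic plane $P$ at distance $R$ on the side of its normal is totally umbilical with both principal curvatures equal to $\tanh R$, hence has constant extrinsic curvature $k$ and the same ideal boundary $\bord P$; by the uniqueness in Theorem~\ref{th.As_plateau_problem} it is precisely the $k$-disk $\rD_{k,h_0}(\bord P)$. In Fermi coordinates the metric around $P$ is $dr^2+\cosh^2 r\,g_P$, so the normal projection $\Phi_R\colon P\to D$ is a homothety of ratio $\cosh R$, in particular conformal and orientation-preserving, sending the base point and a positive multiple of the first tangent vector of a frame $\xi\in FP$ to those of $\hat G_R\xi\in FD$. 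Consequently the uniformizing maps satisfy $u^{(k)}_{\hat G_R\xi}=\Phi_R\circ u^{(0)}_\xi$; composing on the right with $h\in\PSL$, comparing with both defining relations, and invoking the bijectivity of $\xi\mapsto u_\xi$ gives $\alpha_k(h)\circ\hat G_R=\hat G_R\circ\alpha_0(h)$. Since $\hat G_R$ is a diffeomorphism of $F_{h_0}\tilde X$ carrying the $\alpha_0$-orbit foliation onto $F\cF_{k,h_0}$, the composite $\hat G_R\circ\Psi$ is the required smooth conjugacy between right multiplication and $\alpha_k$.

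The main obstacle — essentially the only content beyond bookkeeping — is the geometric input of the second step: identifying equidistant surfaces of totally geodesic planes as the $k$-disks with $k=\tanh^2R$, and checking that $\hat G_R$ matches frames correctly (normal to normal, first tangent vector to a positive multiple, orientations preserved), so that the composed uniformizing maps inherit the correct normalization. Both reduce to the explicit Fermi-coordinate form of the hyperbolic metric, but the frame-matching must be carried out with care so that the uniqueness clause for uniformizing maps (Candel's simultaneous uniformization, \S\ref{sssec:LaminationsSimultaneousUniformization}) upgrades the equality of maps $u^{(k)}_{\hat G_R\xi}=\Phi_R\circ u^{(0)}_\xi$ to the equality of frames $\alpha_k(h)(\hat G_R\xi)=\hat G_R(\alpha_0(h)\xi)$. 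Smoothness of the conjugacy is then automatic, since $\hat G_R$ is a smooth flow map and $\Psi$ is a smooth torsor identification.
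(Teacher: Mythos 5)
Your proposal is correct and follows essentially the same route as the paper: the torsor identification $F_{h_0}\tilde X\simeq\PSLc$ making $\alpha_0$ right multiplication, followed by conjugating $\alpha_0$ to $\alpha_k$ via the frame flow $\hat G_R$ with $k=\tanh^2R$. Your Fermi-coordinate verification that the equidistant surface is the $k$-disk and that $\hat G_R$ is a conformal, frame-compatible map simply fills in details the paper leaves implicit.
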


Since $p:F_{h_0}\tilde{X}\rightarrow S_{h_0}\tilde{X}$ is a principal $\text{SO}(2)$-bundle, $S_{h_0}\tilde{X}$ likewise identifies with the quotient space $\PSLc/\text{SO}(2)$. The images of the orbits of the homogeneous $\PSL$-action under this identification are then the Gauss lifts of totally umbilic planes. It follows that the $k$-surface foliation of $S_{h_0}\tilde{X}$ is likewise smoothly conjugate to the homogeneous foliation of $\PSLc/\text{SO}(2)$ by $\PSL$-orbits.

\subsection{Quasi-Fuchsian surfaces and topological counting}

\subsubsection{Quasi-Fuchsian surfaces} Given $C\geq 1$, we say that a surface subgroup $\Gamma\leq\Pi$ is $C$-\emph{quasi-Fuchsian} whenever $\partial_\infty\Gamma\dans\C\PP^1$ is a $C$-quasicircle. Let $\QF(C)$ denote the set of conjugacy classes of $C$-quasi-Fuchsian surface subgroups of $\Pi$. For any such conjugacy class $[\Gamma]$, we define the $k$-surface $S_{k,h}([\Gamma])$ by
\begin{equation}\label{eq.rep_surface}
S_{k,h}([\Gamma]):=\rD_{k,h}(\partial_\infty\Gamma)/\Gamma\ ,
\end{equation}
where $\Gamma$ is some representative of $[\Gamma]$. Note that the quotient of this surface under the action of $\Gamma$ is a closed $k$-surface in $(X,h)$ whose fundamental group identifies with $\Gamma$. Note, furthermore, that this quotient is independent of the representative chosen. We thus call $S_{k,h}([\Gamma])$ the $k$-surface \emph{representing} $[\Gamma$].

\subsubsection{Topological counting of quasi-Fuchsian surfaces} It follows from a theorem of Thurston (see, for example, \cite[Theorem 4.5]{LabourieBourbaki} for a minimal-surfaces based proof) that the set of conjugacy classes of quasi-Fuchsian subgroups $\Gamma\leq\Pi$ of genus $\text{g}(\Gamma)$ less than some fixed $g$ is finite. In \cite{KM2} (see also \cite{CMN}), Kahn--Markovi\'c provide a precise asymptotic estimate of the number of such conjugacy classes as a function of their genus, showing, in particular, that such classes are abundant.

\begin{theorem}[Kahn--Markovi\'c's topological counting \cite{KM2} (see also \cite{CMN})]\label{th.KM2}
If $(X,h_0)$ is a closed hyperbolic $3$-manifold then, for every $C\geq 1$,
\begin{equation}\label{eq.topo_counting}\lim_{g\to\infty}\frac{1}{2g\log(g)}\log\#\{[\Gamma]\in\mathrm{QF}(C);\,\mathrm{g}(\Gamma)\leq g\}=1.
\end{equation}
\end{theorem}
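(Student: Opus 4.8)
The statement is Kahn--Marković's counting theorem \cite{KM2} (see also \cite{CMN}), so the plan is to follow their strategy: prove the two inequalities $\liminf\geq 1$ and $\limsup\leq 1$ separately, the common exponent $2g\log g$ emerging from a permanent/matching estimate for gluings of pairs of pants. The guiding principle is that a closed oriented surface of genus $g$ decomposes into $2g-2$ pairs of pants glued along $3g-3$ cuffs, and that counting the \emph{geometric} realizations of such gluings by quasi-Fuchsian surfaces matches, to leading exponential order, the count of \emph{combinatorial} gluing patterns.

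\textbf{Lower bound.} First I would fix a large $R$ and work with Kahn--Marković's \emph{good pants} \cite{KM1}: immersed pairs of pants in $X$ whose three cuffs are closed geodesics of complex length within $e^{-\delta R}$ of $2R$, and whose feet are nearly evenly distributed around each cuff. The existence of such pants in abundance, together with the equidistribution of their feet, follows from the exponential mixing of the frame flow on the frame bundle of $(X,h_0)$. I would then glue $2g-2$ good pants along matching cuffs using the Kahn--Marković shear (offsetting paired feet by approximately $1$ along each cuff), so that the two pants meet almost straight across every cuff; the resulting closed surface is then nearly totally geodesic, hence $C(R)$-quasi-Fuchsian with $C(R)\to 1$ as $R\to\infty$, so that for any fixed $C>1$ one chooses $R$ large enough to land in $\QF(C)$. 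The number of admissible matchings is estimated by a permanent of a large nonnegative matrix; the delicate bookkeeping of Kahn--Marković shows that its logarithm is $(2+o(1))g\log g$, and that distinct matchings yield non-conjugate subgroups up to a sub-dominant multiplicity (this is where the \emph{good pants homology} enters, both to correct the remainders when closing up surfaces and to separate conjugacy classes). Dividing by $2g\log g$ gives $\liminf\geq 1$.

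\textbf{Upper bound.} For the reverse inequality I would bound the total number of conjugacy classes of quasi-Fuchsian subgroups of genus at most $g$ by (number of combinatorial pants-gluing patterns) $\times$ (geometric multiplicity per pattern). Any such subgroup has a geodesic pants decomposition into at most $2g-2$ pants; the number of combinatorial gluing patterns of $2g-2$ pants into an oriented surface of genus at most $g$ is $g^{(2+o(1))g}$, so the task reduces to showing that, after fixing the pattern and the quasiconformal constant $C$, the geometric data (which cuff geodesics, and the twists) is pinned down up to a factor that is sub-dominant on the scale $e^{2g\log g}$. Granting this, $\limsup\leq 1$.

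\textbf{Main obstacle.} The crux is the lower bound. Producing genuinely quasi-Fuchsian surfaces with a \emph{uniform} quasiconformal constant, rather than merely $\pi_1$-injective ones, is exactly the deep content of \cite{KM1} and rests on the full strength of exponential mixing and the equidistribution of good pants and their feet; and preserving the factorial count upon passing to conjugacy classes requires the good pants homology to close up surfaces without collapsing the count. On the upper-bound side, the subtle point is to prevent long or irregular cuffs from creating an over-count of order $e^{\Omega(g^2)}$, so that the combinatorial pattern genuinely dominates; controlling the geometry to this precision is what makes the sharp constant $2$ — rather than merely a finite rate — accessible in both directions.
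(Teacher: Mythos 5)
This theorem is not proved in the paper: it is imported wholesale from \cite{KM2}, with the remark following the statement explaining that Kahn--Markovi\'c establish it only for $C=\infty$ and that the case of a fixed finite constant $C>1$ is due to Calegari--Marques--Neves \cite{CMN}, who deduce it from the subgroup-growth formula of M\"uller--Puchta (the content of Lemma~\ref{l.Muller_Puchta} in this paper). So there is no internal proof to compare yours against, and what you have written is a sketch of the external argument rather than a proof; essentially all of the hard content (exponential mixing, equidistribution of feet, the permanent estimate, good pants homology) is deferred to \cite{KM1,KM2}.

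Two substantive comments on the sketch itself. For the lower bound, your route --- rerun the gluing construction at large $R$ so that the output lands in $\QF(C)$ --- is viable in principle, but it is not the route the literature (or this paper) takes for fixed finite $C$: the efficient argument is to take a \emph{single} $(1+\eps)$-quasi-Fuchsian subgroup $\Gamma_0$ from \cite{KM1}, observe that every finite-index subgroup has the same limit set and hence lies in the same $\QF(C)$, and then count conjugacy classes of finite-index subgroups via M\"uller--Puchta; this already yields $\liminf\geq 1$ with none of the permanent bookkeeping. For the upper bound, your proposed argument via geodesic pants decompositions has a genuine gap: an arbitrary genus-$g$ quasi-Fuchsian subgroup need not admit a pants decomposition with cuffs of controlled length or geometry, and you give no mechanism for "pinning down the geometric data" per combinatorial pattern --- which is exactly the step you flag as subtle. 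The actual upper bound in \cite{KM2} applies to \emph{all} surface subgroups (quasi-Fuchsian or not) and proceeds by homotoping to a pleated or simplicial surface of area $O(g)$ and counting such surfaces against a net in $X$; since $\QF(C)$ is contained in the set of all surface subgroups, that bound suffices here and avoids your difficulty entirely.
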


\begin{remark}To be precise, Kahn--Markovi\'c prove the above result only for $C=\infty$. In \cite[Theorem 4.2.]{CMN}, Calegari--Marques--Neves use the formula proven by M\"uller--Puchta in Lemma \ref{l.Muller_Puchta} to show that the above estimate also holds for all $C>1$.
\end{remark}

\subsection{Equidistribution of $k$-surfaces and $\PSL$-invariant measures}

\subsubsection{Conformal currents, $\PSL$-invariant measures and laminar measures}\label{sss.conf_currents_PSL_inv_laminar_measures} The terminology of fibered Plateau problems introduced in \S \ref{sss.PSL_quasicircles} allows us to generalize to higher dimensions the identification between geodesic currents and invariant measures of the geodesic flow studied, for example, by Bonahon in \cite{Bonahon_laminations}. The following construction was first suggested by Labourie in \cite{LabourieBourbaki}, and further developed by the present authors in \cite{ALS} (see also the recent works \cite{Brody_Reyes,MN_currents}).

Consider the bundles $\bord:\text{FKD}_{k,h}(C)\to\text{QC}^+(C)$ and $\bord:\text{MKD}_{k,h}(C)\to\text{QC}^+(C)$ defined in \S \ref{sss.PSL_quasicircles}. Recall that the space $\QC^+(C)$ is separable and locally compact. We say that a Borel regular measure $\mu$ on $\FKD_{k,h}(C)$ (resp. $\MKD_{k,h}(C)$) is $\PSL$-invariant whenever there exists a Borel regular measure $m$ on $\QC^+(C)$ such that, in every trivializing chart $U\times F$ of $\bord$,
\begin{equation*}
\mu|_{U\times F}=m|_U\times\lambda_F\ ,
\end{equation*}
where $\lambda_F$ denotes the Haar measure on $\PSL$ (resp. the hyperbolic area of $\D$). Borel regular $\PSL$-invariant measures on $\FKD(C)$ (resp. $\MKD(C)$) are trivially in one-to-one correspondence with Borel regular measures on $\QC^+$ (see \cite[Lemma 4.2.1.]{ALS}). We call $m$ the \emph{factor} of $\mu$, and we call $\mu$ the \emph{lift} of $m$. By construction, a $\PSL$-invariant measure $\mu$ is $\Pi$-invariant if and only if its factor $m$ is.

We thus have natural bijective correspondences between each of the following classes of objects (see \cite[\S 4.2]{ALS} for a slightly different formalism).
\begin{enumerate}
\item $\Pi$-invariant Borel regular measures $\hat m$ on the space $\text{QC}^+(C)$ of oriented $C$-quasicircles; \emph{these are the analogues in our context of geodesic currents}.
\item $(\Pi,\PSL)$ bi-invariant Borel measures $\hat\nu$ over $\FKD_{k,h}(C)$; \emph{these are the analogues in our context of measures invariant under the geodesic flow}.
\item $\Pi$-invariant Borel measures $\hat\mu$ on $\text{MKD}_{k,h}(C)$ which are totally invariant for the lamination $\cL_{k,h}$, that is, whose disintegration on each leaf of $\cL_{k,h}$ is a multiple of the hyperbolic area; \emph{these are the analogues in our context of totally invariant measures of the geodesic foliation}.
\end{enumerate}

According to Labourie's terminology (see \cite{LabourieBourbaki,MN_currents}), measures of the first class are called \emph{conformal currents}, measures of the second are called \emph{$(\Pi,\PSL)$-bi-invariant measures}, and measures of the third are called \emph{laminar measures}. This is summarized in the following table.

\begin{center}
\begin{tabularx}{\textwidth}{|>{\raggedright\arraybackslash}p{5cm}
                              |>{\raggedright\arraybackslash}X
                              |>{\raggedright\arraybackslash}X|}
    \hline
    \small\textbf{Measure type} &
    \small\textbf{Geodesic flow analogue} &
    \small\textbf{AKA. (see \cite{LabourieBourbaki,MN_currents})} \\
    \hline
    \small $\Pi$ invariant over $\text{QC}^+(C)$ &
    \small geodesic currents &
    \small conformal currents \\
    \small $(\Pi,\PSL)$ bi-invariant over $\FKD_{k,h}(C)$ &
    \small invariant measures of geodesic flow &
    \small bi-invariant measures \\
    \small $\Pi$ invariant over $\text{MKD}_{k,h}(C)$ and invariant wrt.\ $\cL_{k,h}$ &
    \small invariant measures of geodesic foliation &
    \small laminar measures \\
    \hline
\end{tabularx}
\end{center}

\begin{defi}[Ergodic conformal currents]
A conformal current $\hat m$ in $\QC^+(C)$ is said to be \emph{ergodic} if every $\Pi$-invariant measurable function on $\QC^+(C)$ is constant $\hat m$-almost everywhere.
\end{defi}

\subsubsection{Lifts of closed quasi-Fuchsian surfaces}\label{sss.lifts} Choose $C\geq 1$, let $[\Gamma]$ be a conjugacy class in $\text{QF}(C)$ and let $S:=S_{k,h}([\Gamma])$. We define the measure $\mu_S$ on the sphere bundle $S_h X$ such that, for every Borel subset $E$ of $S_h X$,
\begin{equation}\label{eq.inv_measure}
\mu_S(E)=\frac{1}{2\pi|\chi(S)|}\lambda(\pi(E\cap\hat S))\ ,
\end{equation}
where $\hat S$ denotes the Gauss lift of $S$, $\lambda$ denotes the area form of the Poincar\'e metric of $S$, and $\pi:S_hX\rightarrow X$ denotes the canonical projection.\\

Denote $c:=\bord\Gamma$, and let $\delta(c)$ denote the Dirac mass on $\text{QC}^+(C)$ supported on $c$. We now introduce the following conformal measure, bi-invariant measure, and laminated measure.\\

\noindent\emph{The conformal current associated to $S$}. We define
\begin{equation*}
\hat m(c)=\frac{1}{2\pi|\chi(S)|}\sum_{\gamma\in\Pi/\Gamma}\delta(\gamma\cdot c)\ ,
\end{equation*}
where $\gamma\cdot c$ abusively denotes the image of $c$ under any representative of $\gamma\in\Pi/\Gamma$. By construction, $m(c)$ is infinite, atomic and $\Pi$-invariant. We call it the \emph{conformal current} associated to $c$.\\

\noindent\emph{The bi-invariant measure associated to $S$}. Recall that the frame bundle $FD_c$ of $D_c$ naturally identifies, up to a choice of base point, with $\PSL$. Let $\omega_{k,h}(c)$ denote the image of the Haar measure under this identification. We define the $(\Pi,\PSL)$-bi-invariant measure associated to $S$ over $\FKD_{k,h}(C)$ by
\begin{equation*}
\hat\nu_{k,h}(c)=\frac{1}{2\pi|\chi(S)|}\sum_{\gamma\in\Pi/\Gamma}\omega_{k,h}(\gamma.c)\ .
\end{equation*}
This measure is analogous to the lift to $S_h \tilde X$ of the Dirac mass supported by a periodic orbit of the geodesic flow.\\

\noindent \emph{The laminar measure associated to $S$.} Let $\lambda_{k,h}(c)$ denote the area measure of the Poincar\'e metric of the disk $D_c$. The laminar measure associated to $S$ is the measure
\begin{equation*}
\hat\mu_{k,h}(c)=\frac{1}{2\pi|\chi(S)|}\sum_{\gamma\in\Pi/\Gamma}\lambda_{k,h}(\gamma.c)\ .
\end{equation*}
The quotient of this measure under the action of $\Pi$ yields a Borel regular measure $\mu_{k,h}$ over $\MKD_{k,h}(C)/\Pi$ supported on $\hat S$, where here $\hat S$ is viewed as a closed leaf of $\cL_{k,h}$. We identify the measure $\mu_{k,h}$ with $\mu_S$ as follows. Note first that leaves of $\cL_{k,h}$ are identified with Gauss lifts of $k$-surfaces of $X$. It follows that any continuous function $\phi:\MKD_{k,h}(C)/\Pi\to\R$ restricts to a continuous function $\phi:\hat S\to\R$. We then have
\begin{equation}\label{eq.ident_measures}
\int_{\MKD_{k,h}(C)/\Pi} \phi d\mu_{k,h}(c)=\int_{\hat S}\phi d\mu_S\ .
\end{equation}
Note that the conformal current depends neither on the metric $h$, nor on the constant $k$, but that bi-invariant measures and laminar measures do depend on these parameters.\\

\noindent \emph{The area measure.} Finally, we define the measure $\overline{\mu}_{S,h}$ on $S_hX$ by
\begin{equation}\label{eq.area_measure}
\overline{\mu}_{S,h}(E)=\frac{1}{2\pi|\chi(S)|}\Area_h(\pi(E\cap\hat S))\ ,
\end{equation}
where $\Area_h$ denotes the area measure of the restriction to $S$ of the metric $h$, and $\pi:S_hX\rightarrow X$ again denotes the canonical projection. The relationship between the measures $\mu_S$ and $\overline\mu_{S,h}$ is given by the following result.

\begin{lemma}[\cite{ALS}, Lemma 4.2.2]\label{lem_comparison_area_laminar}
If $-b\leq \sect_h\leq -a<0$ over $X$, and if $0<k<a$, then, for every closed, quasi-Fuchsian $k$-surface $S\dans X$,
\begin{equation*}
(a-k)\overline{\mu}_{S,h}\leq\mu_S\leq(b-k)\overline{\mu}_{S,h}.
\end{equation*}
\end{lemma}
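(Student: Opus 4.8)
The plan is to reduce the stated inequality between measures to a pointwise comparison of two conformal area forms on the surface $S$, and then to obtain that comparison from the maximum principle applied to the conformal factor relating them.

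First I would unwind the definitions. Both $\mu_S$ and $\overline\mu_{S,h}$ are supported on the Gauss lift $\hat S$, carry the same normalizing constant $\frac{1}{2\pi|\chi(S)|}$, and push forward under $\pi$ to area forms on $S$: by \eqref{eq.inv_measure} the measure $\mu_S$ is built from $\lambda$, the area form of the Poincar\'e (constant curvature $-1$) metric $g_P$ in the conformal class of $S$, whereas by \eqref{eq.area_measure} the measure $\overline\mu_{S,h}$ is built from the area form of the induced metric $g:=h|_{TS}$. Hence the claimed double inequality is equivalent to the pointwise estimate $(a-k)\,dA_g\leq dA_{g_P}\leq (b-k)\,dA_g$ between these two area forms on $S$.

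Next, since $g$ and $g_P$ are conformal I would write $g=\phi\,g_P$ for a positive function $\phi$ on $S$, so that $dA_g=\phi\,dA_{g_P}$ and the target becomes the everywhere bound $\frac{1}{b-k}\leq\phi\leq\frac{1}{a-k}$. By Gauss' equation together with the curvature pinching $-b\leq\sect_h\leq -a$, the intrinsic curvature of the $k$-disk satisfies $k-b\leq\kappa_{int}\leq k-a<0$ (this is exactly \eqref{eq_intr_curv}). Writing $\phi=e^{2\varphi}$ and using the conformal change of Gauss curvature in the form $\kappa_{int}=\phi^{-1}\bigl(-1-\Delta_{g_P}\varphi\bigr)$, I would rearrange this as $\kappa_{int}\,\phi=-1-\Delta_{g_P}\varphi$.

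The key step is then the maximum principle, which is available because $S$ is compact. At a point where $\phi$ (equivalently $\varphi$) attains its maximum, $\Delta_{g_P}\varphi\leq 0$, so $\kappa_{int}\,\phi\geq -1$; since $\kappa_{int}\leq k-a<0$ there, this forces $\phi\leq \frac{1}{-\kappa_{int}}\leq\frac{1}{a-k}$ at that point, and hence everywhere. Symmetrically, at a minimum of $\phi$ one has $\Delta_{g_P}\varphi\geq 0$, so $\kappa_{int}\,\phi\leq -1$, and using $\kappa_{int}\geq k-b$ there gives $\phi\geq\frac{1}{-\kappa_{int}}\geq\frac{1}{b-k}$ at that point and hence everywhere. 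This yields $\frac{1}{b-k}\leq\phi\leq\frac{1}{a-k}$, which translates back through $dA_g=\phi\,dA_{g_P}$ and the identification of the two area forms into the desired inequality $(a-k)\overline\mu_{S,h}\leq\mu_S\leq (b-k)\overline\mu_{S,h}$. The only real subtlety is the bookkeeping of signs in the curvature pinching and in the conformal curvature formula; there is no genuine analytic obstacle, since compactness of $S$ makes the maximum principle immediate. One could alternatively phrase the same comparison via the Ahlfors--Schwarz lemma, but the maximum-principle argument is the most direct.
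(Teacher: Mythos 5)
Your proof is correct: the reduction to the pointwise comparison of the Poincar\'e and induced area forms is exactly what the definitions \eqref{eq.inv_measure} and \eqref{eq.area_measure} call for, and the maximum-principle bound $\tfrac{1}{b-k}\leq\phi\leq\tfrac{1}{a-k}$ on the conformal factor, using the pinching \eqref{eq_intr_curv} from Gauss' equation, is the standard Ahlfors--Schwarz-type argument. The present paper does not reprove this lemma (it cites \cite{ALS}, Lemma 4.2.2), but your argument is the natural one and matches the intended proof; the sign bookkeeping in the conformal curvature formula and in the two applications of the maximum principle is handled correctly.
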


\subsubsection{Classification of ergodic $(\Pi,\PSL)$-bi-invariant measures}

Following the main idea of \cite{CMN} (see also \cite{LabourieBourbaki}), we use Ratner's theory to study limits of measures of the form $\hat\mu_{k,h}(c)$ as $c$ tends to some round circle $c_0$. Note that the space $\cC^+$ of round circles in $\CP$ is a $3$-dimensional manifold carrying a natural Haar measure which we denote by $\Leb$.

\begin{theorem}[Classification of ergodic conformal currents]
\label{thm.Ratner} For every ergodic conformal current $\hat m$ on $\cC^+$, the following dichotomy holds.
\begin{itemize}
\item Either $\hat m$ is proportional to $\Leb$.
\item Or $\hat m$ is the conformal current of a closed Fuchsian surface.
\end{itemize}
\end{theorem}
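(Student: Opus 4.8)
The plan is to translate the statement into homogeneous dynamics and then invoke Ratner's measure classification theorem, exactly along the lines of the approach of \cite{CMN}. First I would use the homogeneous model of Theorem \ref{t.homogeneous_model}: fixing a base frame identifies the frame bundle $F_{h_0}\tilde X$ with $\PSLc$, under which the right $\PSL$-action $\alpha_0$ becomes right multiplication and the left $\Pi$-action becomes left multiplication. Since the boundary map $\bord$ sends a frame to the round circle bounding its totally geodesic plane, and its fibres are precisely the $\PSL$-orbits, this identifies $\cC^+$ with the homogeneous space $\PSLc/\PSL$, with $\Pi$ acting on the left. Via the correspondence of \S\ref{sss.conf_currents_PSL_inv_laminar_measures} (see \cite[Lemma 4.2.1]{ALS}), a conformal current $\hat m$ on $\cC^+$ lifts to a $(\Pi,\PSL)$-bi-invariant measure on $\PSLc$, which descends to a right-$\PSL$-invariant measure $\nu$ on the \emph{compact} quotient $\Pi\backslash\PSLc=F_{h_0}X$; moreover $\hat m$ is ergodic for the $\Pi$-action exactly when $\nu$ is ergodic for the $\PSL$-action. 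Because $F_{h_0}X$ is compact, $\nu$ is finite, so after normalization I may assume $\nu$ is an ergodic $\PSL$-invariant \emph{probability} measure.

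The group $\PSL=\mathrm{PSL}_2(\R)$ is generated by one-parameter unipotent subgroups, so Ratner's measure classification theorem applies to the right $\PSL$-action on $\Pi\backslash\PSLc$. It yields that $\nu$ is algebraic: there exist a point $g\in\PSLc$ and a closed connected subgroup $H\le\PSLc$ such that the orbit $\Pi g H$ is closed in $\Pi\backslash\PSLc$, the measure $\nu$ is the $H$-invariant probability measure supported on it, and $g^{-1}\Pi g\cap H$ is a lattice in $H$. Since $\nu$ is $\PSL$-invariant and $H$ is the connected group generated by the unipotents preserving $\nu$, we have $\PSL\subseteq H\subseteq\PSLc$.

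The dichotomy then follows from the purely algebraic fact that $\mathfrak{sl}_2(\R)$ is a maximal real subalgebra of $\mathfrak{sl}_2(\C)$. Indeed, writing $\mathfrak{sl}_2(\C)=\mathfrak{sl}_2(\R)\oplus i\,\mathfrak{sl}_2(\R)$ as real vector spaces, any intermediate subalgebra strictly containing $\mathfrak{sl}_2(\R)$ must contain some nonzero element $iB$ with $B\in\mathfrak{sl}_2(\R)$; bracketing with $\mathfrak{sl}_2(\R)$ and using the simplicity of $\mathfrak{sl}_2(\R)$ shows that it contains $i\,\mathfrak{sl}_2(\R)$ in its entirety, hence equals $\mathfrak{sl}_2(\C)$. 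Therefore the only closed connected subgroups $H$ with $\PSL\subseteq H\subseteq\PSLc$ are $H=\PSL$ and $H=\PSLc$, and I obtain exactly two cases.

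In the case $H=\PSLc$ the orbit is all of $\Pi\backslash\PSLc$ and $\nu$ is the bi-invariant Haar probability measure; pushing forward under $\bord$ recovers the $\PSLc$-invariant measure $\Leb$ on $\cC^+$, so $\hat m$ is proportional to $\Leb$. In the case $H=\PSL$ the orbit $\Pi g\PSL$ is closed, which forces $\Lambda:=g^{-1}\Pi g\cap\PSL$ to be a cocompact lattice in $\PSL$; geometrically $g$ determines a totally geodesic plane $P\dans\Hyp$ whose $\Pi$-stabilizer is the Fuchsian group $\Lambda$, so $P/\Lambda$ is a closed totally geodesic (hence Fuchsian) surface, and unwinding the correspondence shows that $\hat m$ is precisely its associated conformal current. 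I expect the main subtlety to lie not in the invocation of Ratner's theorem but in this last bookkeeping step: one must verify carefully that the closed $\PSL$-orbit corresponds to a \emph{closed} surface (that is, that the relevant stabilizer is genuinely a cocompact lattice rather than an infinite-covolume Fuchsian group), that the homogeneous measure on it matches the atomic conformal current of \S\ref{sss.lifts}, and that ergodicity and local finiteness pass correctly through the correspondence so that the hypotheses of Ratner's theorem are genuinely met.
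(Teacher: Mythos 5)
Your proposal is correct and follows essentially the same route as the paper: pass to the homogeneous model $F_{h_0}\tilde X\simeq\PSLc$, lift the ergodic conformal current to a $(\Pi,\PSL)$-bi-invariant measure, apply Ratner's classification to conclude the measure is homogeneous for some closed connected $H$ with $\PSL\subseteq H\subseteq\PSLc$, and use the maximality of $\PSL$ in $\PSLc$ to obtain the two cases. The paper's proof is just a terser version of this argument (it cites \cite{Einsiedler} for the $\PSL$-action case and leaves the final bookkeeping implicit), so your extra care with the maximality of $\mathfrak{sl}_2(\R)$ and with verifying that a closed $\PSL$-orbit yields a closed Fuchsian surface only makes explicit what the paper elides.
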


\begin{proof}
We work with the hyperbolic metric $h:=h_0$ on $\tilde{X}$. Let $\hat\nu_0$ be an ergodic measure over $\PSLc\simeq F_{h_0}\tilde{X}$ which is both invariant under the left action of $\Pi$, and under the right action by multiplication of $\PSL$. By Ratner's classification theorem \cite{Ratner_Duke} (see \cite{Einsiedler} for a simple proof for the case of $\PSL$-actions), this measure is \emph{homogeneous}. That is, it is the Haar measure supported on some closed orbit $v_0G$ of some closed, connected Lie subgroup $G\leq\PSLc$ containing $\PSL$. Since the only closed and connected Lie subgroups of $\PSLc$ containing $\PSL$ are $\PSL$ and $\PSLc$ itself, this yields the following dichotomy:
\begin{itemize}
\item either $\nu_0$ is the Haar measure of $F_{h_0}\tilde X/\Pi$;
\item or $\nu_0$ is the Haar measure of the frame bundle of some closed, totally-umbilic surface of $\tilde X/\Pi$.
\end{itemize}
The result now follows.
\end{proof}

Theorem \ref{thm.Ratner} also yields the following dichotomy for ergodic laminar measures over $S_hX$.

\begin{theorem}[Dichotomy for laminar measures]\label{thm.dichotomy_laminar} There exists a unique ergodic laminar probability measure $\mu$ on $S_hX$ with full support. All other ergodic laminar probability measures on $S_hX$ are associated to closed Fuchsian $k$-surfaces.
\end{theorem}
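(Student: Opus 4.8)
The plan is to transport the dichotomy of Theorem \ref{thm.Ratner} from the space of round circles $\cC^+ = \QC^+(1)$ to the sphere bundle $S_hX$ by means of the bijective correspondence between conformal currents and laminar measures established in \S\ref{sss.conf_currents_PSL_inv_laminar_measures}. First I would specialize to the case $C=1$, where, by the Foliated Plateau problem (\S\ref{sss.FPP}), the space $\MKD_{k,h}(1)$ is identified $\Pi$-equivariantly with $S_h\tilde{X}$ and the lamination $\cL_{k,h}$ with the $k$-surface foliation $\cF_{k,h}$; passing to the $\Pi$-quotient identifies $\MKD_{k,h}(1)/\Pi$ with the compact bundle $S_hX$. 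Under this identification, $\Pi$-invariant laminar measures on $S_h\tilde{X}$ descend to laminar measures on $S_hX$, which are finite since $S_hX$ is compact and hence normalize to probability measures, and by \cite[Lemma 4.2.1.]{ALS} these correspond bijectively, via the factor map $\bord\colon S_h\tilde{X}\to\cC^+$, with $\Pi$-invariant conformal currents on $\cC^+$.

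The key step is to verify that this correspondence matches ergodic objects with ergodic objects, and this is the point I expect to be the main obstacle. Since a laminar measure $\hat\mu$ is by definition totally invariant — its disintegration along the fibres of $\bord$ is the hyperbolic area of the corresponding leaf — every measurable subset of $S_hX$ saturated by $\cF_{k,h}$ is, the leaves being exactly the fibres of $\bord$, the preimage of a subset of $\cC^+$, and $\Pi$-invariance of the two sets matches. Equivalently, pulling back by $\bord$ yields a bijection between $\Pi$-invariant measurable functions on $\cC^+$ (modulo null sets for the factor $\hat m$) and $\Pi$-invariant leafwise-constant functions on $S_h\tilde{X}$ (modulo $\hat\mu$-null sets); here the fact that the leaf measures have full support and lie in a single measure class is what guarantees that a leafwise-constant function is determined by its values on the base. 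It follows that $\hat\mu$ is ergodic if and only if its factor $\hat m$ is ergodic as a conformal current.

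Finally I would invoke Theorem \ref{thm.Ratner}. An ergodic laminar probability measure $\hat\mu$ on $S_hX$ has an ergodic factor $\hat m$ on $\cC^+$, which is either proportional to the Haar measure $\Leb$ or is the conformal current of a closed Fuchsian surface. In the first case there is, up to scale, a single such current, so after normalization it yields a single laminar probability measure $\mu$; its support is full because in any trivializing chart $U\times\D$ of $\bord$ the measure reads $\Leb|_U\times\lambda_\D$, the product of the full-support Haar measure on $\cC^+$ with the full-support hyperbolic area on the disk fibre, whence $\mu$ charges every nonempty open subset of $S_hX$. In the second case the factor is supported on the discrete $\Pi$-orbit of the limit circle of a Fuchsian subgroup $\Gamma$, so $\hat\mu$ is supported on the single closed leaf that is the Gauss lift of the Fuchsian $k$-surface $S_{k,h}([\Gamma])$; as this leaf is a proper $2$-dimensional subset of the $5$-dimensional space $S_hX$, such a measure can never have full support. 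This simultaneously shows that $\mu$ is the unique full-support ergodic laminar probability measure and that every other ergodic laminar probability measure is associated to a closed Fuchsian $k$-surface.
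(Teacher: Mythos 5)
Your proposal is correct and follows essentially the same route as the paper: pass from the laminar measure to its factor conformal current on $\cC^+$, apply the Ratner-based classification of Theorem \ref{thm.Ratner}, and translate the two alternatives back to $S_hX$. The paper's own proof is terser (it leaves implicit the matching of ergodicity and full support across the correspondence, which you spell out), but the underlying argument is identical.
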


\begin{remark}
Measures which arise as limits of random minimal surfaces are studied by Kahn--Markovi\'c--Smilga in \cite{KMS}. We expect analogous results to hold for limits of random $k$-surfaces.
\end{remark}

\begin{remark}
When $\Pi$ contains no Fuchsian subgroup, there exists a unique laminar probability measure on $S_hX$ which is fully supported.
\end{remark}

\begin{proof}
Let $\hat\mu$ denote the lift of $\mu$ to $S_h\tilde{X}$ and let $\hat m$ denote its associated conformal current on $\cC^+$. If $\mu$ has full support, then so too does $\hat m$ so that, by Theorem \ref{thm.Ratner}, $\hat m$ is proportional to $\Leb$, and the first assertion follows. Likewise, if $\mu$ does not have full support, then, by Theorem \ref{thm.Ratner} again, $\hat\mu$ is the laminar measure associated to some closed Fuchsian $k$-surface, as in \S \ref{sss.lifts}, and the second assertion follows.
\end{proof}

\subsubsection{Equidistribution and Kahn-Markovi\'c's sequences} The following facts are key to proving the main results of \cite{ALS}.

\begin{lemma}\label{lem.conv_fuchsian}
Let $(\eps_n)_{n\in\mathbb{N}}$ be a sequence of positive numbers converging to zero. For all $n$, let $\Gamma_n$ be a $(1+\eps_n)$-quasi-Fuchsian subgroup of $\Pi$, denote $c_n:=\bord\Gamma_n$ and let $S_n=S_{k,h}([\Gamma_n])$ denote the $k$-surface representative of $\Gamma_n$ in $(X,h)$. If $\lim_{n\to\infty}\eps_n=0$, then every accumulation point of $(\hat\mu_{k,h}(c_n))_{n\in\mathbb{N}}$ is a laminar measure on $\MKD_{k,h}(1)\simeq S_{h}\tilde{X}$.
\end{lemma}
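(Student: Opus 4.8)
The plan is to pass to the compact quotient to extract accumulation points, and then to establish separately that any such limit (i) concentrates on the round-circle sublamination $\MKD_{k,h}(1)$ and (ii) remains laminar.

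First I would fix $C$ with $1+\eps_n\le C$ for all $n$ (possible since $\eps_n\to 0$) and work on $\MKD_{k,h}(C)$. Each $\hat\mu_{k,h}(c_n)$ is $\Pi$-invariant, and its quotient $\mu_{S_n}$ on the compact space $\MKD_{k,h}(C)/\Pi\simeq S_hX$ is a probability measure: its total mass is $\tfrac{1}{2\pi|\chi(S_n)|}$ times the Poincaré area of $S_n$, which equals $1$ by Gauss--Bonnet. Weak-$*$ compactness of probability measures on a compact metrizable space then furnishes accumulation points, and since $\Pi$-invariance is a closed condition under vague limits, every accumulation point of $(\hat\mu_{k,h}(c_n))$ arises as the $\Pi$-invariant lift of a weak-$*$ accumulation point $\mu_\infty$ of $(\mu_{S_n})$, with no loss of mass.

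Next I would prove the concentration statement. For each $\delta>0$ the set $\QC^+(1+\delta)$ is closed in $\QC^+(C)$: a Hausdorff limit of $(1+\delta)$-quasicircles is again a $(1+\delta)$-quasicircle, by the compactness of suitably normalized $(1+\delta)$-quasiconformal maps (normal families). As $\Pi$ acts by Möbius maps it preserves $\QC^+(1+\delta)$ (property (1) of \S\ref{sss.PSL_quasicircles}), so $\QC^+(1+\delta)/\Pi$ is closed, and by continuity of $\bord$ the saturated preimage $\MKD_{k,h}(1+\delta)/\Pi$ is closed. For every $n$ with $\eps_n<\delta$ each curve $\gamma\cdot c_n$ is a $(1+\eps_n)$-quasicircle, so $\mu_{S_n}$ is supported on this closed set; the portmanteau theorem then forces $\mu_\infty$ to be supported there as well. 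Since a Jordan curve that is a $(1+\delta)$-quasicircle for every $\delta>0$ is a round circle (property (2)), we have $\bigcap_{\delta>0}\QC^+(1+\delta)=\cC^+$, and intersecting over $\delta$ shows $\mu_\infty$ is carried by $\MKD_{k,h}(1)/\Pi$. Lifting, the accumulation point is supported on $\MKD_{k,h}(1)\simeq S_h\tilde{X}$.

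Finally, laminarity passes to the limit almost formally. In a trivializing chart $U\times\D$ of $\bord$ built from the uniformizing maps of \S\ref{sssec:LaminationsSimultaneousUniformization}, each $\hat\mu_{k,h}(c_n)$ has the product form $m_n|_U\times\lambda_\D$, where $\lambda_\D$ is the hyperbolic area of the disk --- independent of the transverse parameter --- and $m_n:=\hat m(c_n)$ is the associated conformal current playing the role of the transverse factor. A weak-$*$ limit of such products is again of the form $m_\infty|_U\times\lambda_\D$ with the same leafwise factor, so $\mu_\infty$ disintegrates as a multiple of hyperbolic area along the leaves of $\cL_{k,h}$; equivalently, one lifts to the frame bundle and notes that the $\PSL$- and $\Pi$-invariance of the bi-invariant measures $\hat\nu_{k,h}(c_n)$ persist under vague limits (each being closed under the continuous actions $\alpha_k$ and $\Pi$), so the limit pushes forward under $p$ to a laminar measure. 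Combined with the concentration step, this shows $\mu_\infty$ is a laminar measure on $\MKD_{k,h}(1)$, as desired. I expect the concentration in the third paragraph to be the crux: it is the only place the hypothesis $\eps_n\to 0$ enters, and it hinges on the closedness of $\QC^+(1+\delta)$ in $\QC^+(C)$ together with the exhaustion $\bigcap_{\delta>0}\QC^+(1+\delta)=\cC^+$; without it the limit could be carried by genuinely non-round quasicircles. Laminarity, by contrast, is inherited for free from the fixed leafwise hyperbolic-area factor in the $\PSL$-trivializations.
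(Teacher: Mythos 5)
Your argument is correct, and it follows the standard route that the paper itself only sketches by deferring to \cite{ALS}: extract weak-$*$ limits of the probability quotients on the compact space $\MKD_{k,h}(C)/\Pi$, use closedness of $\QC^+(1+\delta)$ in $\QC^+(C)$ together with $\bigcap_{\delta>0}\QC^+(1+\delta)=\cC^+$ and portmanteau to force the limit onto $\MKD_{k,h}(1)$, and use the continuous product structure of the trivializations (equivalently, closedness of $(\Pi,\PSL)$-bi-invariance under vague limits upstairs) to preserve laminarity. You correctly identify the concentration step as the only place where $\eps_n\to 0$ is used; no gaps.
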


Kahn--Markovi\'c's construction \cite{KM1} of quasi-Fuchsian subgroups of $\Pi$ yields a sequence of surfaces with remarkable properties.

\begin{theorem}[Equidistribution of Kahn-Markovi\'c sequences]\label{KM_equidistrib}
There exists a sequence $(\eps_n)_{n\in\mathbb{N}}$ of positive numbers converging to zero, and a sequence $(\Gamma_n)_{n\in\mathbb{N}}$ of quasi-Fuchsian subgroups of $\Pi$ satisfying the following properties.
\begin{enumerate}
\item For all $n$, $\Gamma_n$ is $(1+\eps_n)$-quasi-Fuchsian; and
\item the sequence of laminar measures $(\hat \mu_{k,h}(\bord\Gamma_n))_{n\in\mathbb{N}}$ converges to a laminar measure $\hat \mu_\infty$ on $S_h\tilde{X}$ with full support.
\end{enumerate}
\end{theorem}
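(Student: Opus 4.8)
The plan is to derive the statement from Kahn--Markovi\'c's equidistribution theorem together with the correspondence between conformal currents and laminar measures set up in \S\ref{sss.conf_currents_PSL_inv_laminar_measures}. The crucial structural remark is that the conformal current $\hat m(c)$ attached to a quasicircle $c$ depends neither on the auxiliary metric $h$ nor on the constant $k$. Consequently the whole equidistribution phenomenon can be established once and for all at the level of conformal currents, in the homogeneous model of Theorem \ref{t.homogeneous_model}, and only afterwards transported to $S_h\tilde X$ for an arbitrary admissible metric $h$. So the target is to produce $\eps_n\to 0$ and $(1+\eps_n)$-quasi-Fuchsian subgroups $\Gamma_n$, with $c_n:=\bord\Gamma_n$, such that the conformal currents $\hat m(c_n)$ converge weakly to the Haar measure $\Leb$ on the space $\cC^+$ of round circles.

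First I would invoke Kahn--Markovi\'c's construction \cite{KM1}, in the quantitative equidistributed form used in \cite[\S 3]{CMN}: there is a sequence $\eps_n\to 0$ and a family of $(1+\eps_n)$-quasi-Fuchsian subgroups $\Gamma_n\leq\Pi$ whose representing surfaces equidistribute in the frame bundle of the hyperbolic manifold. Writing $S_n:=S_{k,h_0}([\Gamma_n])$, this says precisely that the probability measures $\mu_{S_n}$ on the compact quotient $S_{h_0}X$ converge weakly to the natural (Liouville/Haar) measure. Here the normalization $1/(2\pi|\chi(S_n)|)$ appearing in \eqref{eq.inv_measure} is exactly the Gauss--Bonnet normalization making each $\mu_{S_n}$ a probability measure, so no genus-dependent rescaling is required. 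Translating through the homogeneous identification $F_{h_0}\tilde X\simeq\PSLc$ of Theorem \ref{t.homogeneous_model} and the bijection of \S\ref{sss.conf_currents_PSL_inv_laminar_measures}, this convergence is the desired statement $\hat m(c_n)\to\Leb$ on $\cC^+$.

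Next I would transport the conclusion to an arbitrary metric $h$ with $-b\le\sect_h\le -a$ and $0<k<a$. The measures $\hat\mu_{k,h}(c_n)$ descend to probability measures on the compact space $S_hX$, so any subsequence has an accumulation point $\mu$; by Lemma \ref{lem.conv_fuchsian} every such $\mu$ is laminar. Using the continuity of the correspondence between laminar measures and their conformal currents established in \cite[\S 4.2]{ALS} (the factor map is governed by the continuous boundary map $\bord$, and is insensitive to $h$ and $k$), the conformal current of $\mu$ is the weak limit of $\hat m(c_n)$, namely $\Leb$. By Theorem \ref{thm.Ratner}, $\Leb$ is the fully supported member of the ergodic dichotomy, and by Theorem \ref{thm.dichotomy_laminar} the laminar measure lifting it is the \emph{unique} ergodic laminar probability measure on $S_hX$ with full support. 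Hence every subsequential limit equals this single measure $\hat\mu_\infty$, so $\hat\mu_{k,h}(c_n)\to\hat\mu_\infty$; lifting back to $S_h\tilde X$ preserves full support by $\Pi$-invariance, giving property (2).

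The main obstacle is the equidistribution input of the second paragraph: the assertion that the Kahn--Markovi\'c surfaces equidistribute to $\Leb$, and not to the conformal current of some lower-dimensional closed Fuchsian surface, is the deep geometric content, resting on the fine mixing estimates underlying \cite{KM1,CMN}. By contrast, once this is in hand, the transfer to general $h$ is essentially formal: it relies only on the metric-independence of conformal currents and on the continuity of the factor/lift correspondence, the latter being the sole routine point that must be verified carefully because the leaves of $\cL_{k,h}$ are noncompact and carry infinite Poincar\'e area, so one must argue on the compact quotient $S_hX$ throughout.
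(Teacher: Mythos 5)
Your proposal is correct in substance, but it organizes the argument differently from the paper. The paper's own proof (given in \cite{ALS} and summarized here in one sentence) establishes only the \emph{weaker} statement that the $k$-surfaces representing the Kahn--Markovi\'c subgroups do not accumulate on closed surfaces, and then lets the Ratner-based dichotomy of Theorem \ref{thm.dichotomy_laminar} do the upgrading: any subsequential limit is a laminar probability measure whose ergodic components are, by the dichotomy, either the fully supported measure or measures carried by closed Fuchsian $k$-surfaces, and the non-accumulation statement kills the second alternative. You instead import the \emph{full} equidistribution of the Kahn--Markovi\'c family as a black box from \cite{KM1,CMN} and transfer it to arbitrary $(k,h)$ via the metric- and $k$-independence of conformal currents; once the currents $\hat m(c_n)$ converge to $\Leb$, the appeal to Theorems \ref{thm.Ratner} and \ref{thm.dichotomy_laminar} at the end of your argument is actually redundant, since the limit laminar measure is already pinned down as the lift of $\Leb$ by continuity of the lift correspondence. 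What your route buys is a cleaner, essentially formal transfer step and a proof that isolates all the dynamics in the hyperbolic model; what it costs is (i) a stronger input than the paper needs --- note that \cite{CMN} phrase their equidistribution for limit sets/minimal surfaces, not for $k$-surfaces, so you must state the input at the level of conformal currents rather than for $S_{k,h_0}([\Gamma_n])$ directly, and that equidistribution is itself proved in the literature by the same ``non-accumulation plus Ratner'' scheme the paper uses --- and (ii) the obligation, which you correctly flag, to verify that the current-to-laminar-measure correspondence is continuous for weak-$*$ convergence as the quasicircle constant $1+\eps_n$ varies, working in a fixed $\QC^+(C)$ containing all the $c_n$.
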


We provide a proof of this theorem in \cite{ALS} by showing that the $k$-surfaces representing the subgroups constructed by Kahn--Markovi\'c in \cite{KM1} do not accumulate on closed surfaces, from which Theorem \ref{KM_equidistrib} follows as a consequence of Theorem \ref{thm.dichotomy_laminar}. In \cite[Proposition 6.1]{ln21}, in collaboration with A. Neves, the second author proves the following stronger result.

\begin{theorem}[Lowe--Neves]\label{th.lowe_neves}
There exists a sequence $(\eps_n)_{n\in\mathbb{N}}$ of positive numbers converging to zero, and a sequence $(\Gamma_n)_{n\in\mathbb{N}}$ of quasi-Fuchsian subgroups of $\Pi$ satisfying the following properties.
\begin{enumerate}
\item For all $n$, $\Gamma_n$ is $(1+\eps_n)$-quasi-Fuchsian; and
\item the sequence of laminar measures $(\hat \mu_{k,h}(\bord\Gamma_n))_{n\in\mathbb{N}}$ converges to a fully supported and ergodic laminar measure on $S_h\tilde{X}$.
\end{enumerate}
\end{theorem}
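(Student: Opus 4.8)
The plan is to strengthen Theorem \ref{KM_equidistrib}, which already furnishes a Kahn--Markovi\'c sequence $(\Gamma_n)$ whose laminar measures $\hat\mu_{k,h}(\bord\Gamma_n)$ converge to a \emph{fully supported} laminar measure $\hat\mu_\infty$; the only thing left to establish is \emph{ergodicity} of the limit. I emphasize at the outset that full support alone does not give ergodicity, since a convex combination $\alpha\,\mu+(1-\alpha)\,\nu$ of the fully supported ergodic measure $\mu$ of Theorem \ref{thm.dichotomy_laminar} with a Fuchsian laminar measure $\nu$ is again fully supported but not ergodic whenever $0<\alpha<1$. So genuine equidistribution, and not merely non-escape of mass, must be proved.

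First I would reduce the problem to the homogeneous model. Under the dictionary of \S\ref{sss.conf_currents_PSL_inv_laminar_measures}, the laminar measure $\hat\mu_{k,h}(\bord\Gamma_n)$ corresponds to its factor conformal current $\hat m(\bord\Gamma_n)$ on the space $\cC^+$ of round circles, and this correspondence is a measure-theoretic isomorphism intertwining the $\Pi$-actions; in particular a laminar measure is ergodic if and only if its factor conformal current is. Crucially, the conformal current depends neither on $h$ nor on $k$, so it suffices to prove ergodicity of the limiting conformal current, which reduces the entire statement (for all admissible $h$ at once) to the hyperbolic model. By Theorem \ref{thm.Ratner}, the \emph{only} ergodic conformal current with full support is a multiple of $\Leb$, which under Theorem \ref{t.homogeneous_model} is the factor of the Haar measure on $F_{h_0}\tilde X/\Pi\simeq\PSLc/\Pi$. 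Hence it is enough to produce a Kahn--Markovi\'c sequence whose associated $(\Pi,\PSL)$-bi-invariant measures $\hat\nu_{k,h}(\bord\Gamma_n)$ converge weakly to this Haar measure.

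The heart of the argument, and the main obstacle, is precisely this equidistribution to Haar, which is the refinement of \cite{ALS} carried out in \cite[Proposition 6.1]{ln21}. Here I would return to the construction of \cite{KM1}: for a large parameter $R$, closed quasi-Fuchsian surfaces are assembled from $(R,\eps)$--good pants glued along good curves, and the combinatorics of the assembly are governed by the distribution in the frame bundle of the feet of the pants. The analytic input is the exponential mixing of the frame flow on $\PSLc/\Pi$ (equivalently, the spectral gap via Howe--Moore), which forces the normalized counting measures on the frames of the assembled surfaces to converge weakly to Haar as $R\to\infty$, rather than merely to charge every open set. Transporting this along the smooth conjugacy $\hat G_R$ of Theorem \ref{t.homogeneous_model}, which intertwines $\alpha_0$ and $\alpha_k$ and pushes Haar to Haar, yields the desired convergence of the $\hat\nu_{k,h}(\bord\Gamma_n)$.

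Finally I would assemble the pieces. Since the good pants become arbitrarily close to totally geodesic as $R_n\to\infty$, the limit sets $\bord\Gamma_n$ are $(1+\eps_n)$--quasicircles with $\eps_n\to 0$, giving property $(1)$. The Haar measure on $F_{h_0}\tilde X/\Pi$ is ergodic (indeed mixing) for the right $\PSL$-action by Moore's ergodicity theorem, and it has full support; by the uniqueness clause of Theorem \ref{thm.dichotomy_laminar}, its associated laminar measure is therefore the unique fully supported ergodic laminar measure on $S_h\tilde X$. Lemma \ref{lem.conv_fuchsian} guarantees a priori that every accumulation point of $(\hat\mu_{k,h}(\bord\Gamma_n))$ is laminar, so the convergence established above identifies the limit with this measure, giving property $(2)$ and completing the proof.
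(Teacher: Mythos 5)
The paper offers no proof of this statement: it is quoted directly from \cite[Proposition 6.1]{ln21}, so there is no internal argument to compare yours against. Judged on its own terms, your reductions are correct and use the paper's machinery appropriately: the passage from laminar measures to their factor conformal currents, the observation that the conformal current depends on neither $h$ nor $k$ (so the whole statement reduces to the homogeneous model), the identification via Theorem \ref{thm.Ratner} of the unique fully supported ergodic current with $\Leb$, and the final assembly using Lemma \ref{lem.conv_fuchsian} and the uniqueness clause of Theorem \ref{thm.dichotomy_laminar} are all sound. Your opening remark that full support does not imply ergodicity --- because a nontrivial convex combination of the fully supported ergodic measure with a Fuchsian laminar measure is still fully supported --- correctly isolates what is ``stronger'' about this theorem relative to Theorem \ref{KM_equidistrib}.

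The gap is that the one step carrying all the weight is asserted rather than proved. The claim that exponential mixing of the frame flow ``forces the normalized counting measures on the frames of the assembled surfaces to converge weakly to Haar'' is precisely the content of \cite[Proposition 6.1]{ln21}, and it does not follow formally from mixing. Mixing (equivalently, the spectral gap) yields equidistribution of the feet of $(R,\eps)$-good pants in the frame bundle; to conclude that a closed surface built by gluing exponentially many such pants itself equidistributes, one needs the quantitative counting and matching arguments of Kahn--Markovi\'c, control of the geometry of the assembled surface away from the boundary geodesics of the pants, and --- most importantly for ergodicity --- an argument ruling out that a definite fraction of the mass concentrates on a closed Fuchsian leaf, which is exactly the statement that the limit charges no Fuchsian ergodic component. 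Since none of this is carried out, your proposal amounts to a correct framing of the problem followed, at its crux, by an appeal to the very result it is meant to establish; it would serve well as an exposition of why the theorem reduces to equidistribution in the hyperbolic frame bundle, but not as an independent proof.
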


\section{Fuchsian, almost Fuchsian and totally umbilical $k$-surfaces} In this section we study some geometric properties of $k$-surfaces in closed $3$-manifolds of negative sectional curvature bounded below by $-1$. We define the $p$-energy of such a surface, and we prove a $k$-surface analogue of the result \cite{Seppi} of Seppi, namely that almost-Fuchsian $k$-surfaces in hyperbolic $3$-space are almost totally umbilical.

\subsection{$p$-energy of surfaces} Recall that $(X,h)$ is a closed $3$-dimensional riemannian manifold of sectional curvature pinched between $-b$ and $-a$, where $0<a<b$. Given a closed, oriented immersed surface $S\dans X$, and a real number $p\geq 0$, we define the \emph{$p$-energy} of $S$ by
\begin{equation*}
W_h^{p}(S):=\int_S H^p d\Area_h\ ,
\end{equation*}
and we define its \emph{normalized $p$-energy} by
\begin{equation*}
\overline{W}_h^{p}(S):=k^{-p/2}W_h^p(S)=k^{-p/2}\int_S H^p d\Area_h\ .
\end{equation*}

\begin{lemma}\label{lem_lower_bounds_energy}
If $-1\leq\sect_h\leq -a$, $0<k<a$, $p\geq 0$, then, for every closed $k$-surface $S$,
\begin{enumerate}
\item $\Area_h(S)\geq 2\pi|\chi(S)|/(1-k)$ with equality holding if and only if the sectional curvature of $h$ equals $-1$ along $TS$;
\item $W_h^{p}(S)\geq k^{p/2}\Area_h(S)$ with equality holding if and only if $S$ is totally umbilical.
\end{enumerate}
\end{lemma}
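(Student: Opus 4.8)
The plan is to establish both inequalities as consequences of the Gauss equation together with elementary pointwise estimates, treating the two items separately.

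For the first item, I would start from the Gauss equation $\kappa_{int}=\kappa_{ext}+\sect_h|_{TS}$ recorded in \S\ref{sss.space_k_discs}. Since $S$ is a $k$-surface, its extrinsic curvature is identically $k$, so the lower curvature bound $\sect_h\geq -1$ gives the pointwise estimate $\kappa_{int}\geq k-1$. As $k<a<1$, the right-hand side is strictly negative, so $S$ is of hyperbolic type and $\chi(S)<0$. Integrating and applying Gauss--Bonnet, $-2\pi|\chi(S)|=\int_S\kappa_{int}\,d\Area_h\geq (k-1)\Area_h(S)$; dividing by the negative number $k-1$ reverses the inequality and yields $\Area_h(S)\geq 2\pi|\chi(S)|/(1-k)$. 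Equality forces $\kappa_{int}=k-1$ almost everywhere, hence everywhere by continuity, which is exactly the condition $\sect_h|_{TS}\equiv -1$.

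For the second item, I would argue pointwise using the principal curvatures $\kappa_1,\kappa_2$ of $S$. Since $\kappa_{ext}=\kappa_1\kappa_2=k$, and with the convention $H=(\kappa_1+\kappa_2)/2$, the arithmetic--geometric mean inequality gives $H\geq\sqrt{\kappa_1\kappa_2}=k^{1/2}$ at every point, with equality precisely where $\kappa_1=\kappa_2$, that is, where $S$ is umbilical. Raising to the power $p\geq 0$ preserves this, so $H^p\geq k^{p/2}$ pointwise, and integration gives $W_h^{p}(S)\geq k^{p/2}\Area_h(S)$. For $p>0$, equality in the integral forces $H\equiv k^{1/2}$, hence $S$ is totally umbilical; for $p=0$ both sides coincide identically and the umbilicity condition is vacuous.

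Neither step presents a genuine obstacle: both reduce to a sign analysis of the Gauss equation and a single application of AM--GM. The only points requiring care are the direction of the inequality when dividing by the negative factor $k-1$ in the first item, and the degenerate case $p=0$ in the second, where the characterization of the equality case should be understood as applying to $p>0$.
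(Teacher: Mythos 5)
Your proof is correct and follows essentially the same route as the paper's: Gauss' equation combined with Gauss--Bonnet for the area bound, and the arithmetic--geometric mean inequality applied to the principal curvatures (positive by the orientation convention) for the energy bound. Your additional remark that the equality characterization in item (2) is vacuous for $p=0$ is a fair point of care that the paper leaves implicit.
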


\begin{proof}
Indeed, by Gauss' equation and the Gauss--Bonnet theorem,
\begin{equation*}
2 \pi \chi(S) = \int_{S} \kappa_{int}d\Area_h \geq \int_{S} (-1 + k)d\Area_h=\Area_h(S) (k-1)\ .
\end{equation*}
Hence
\begin{equation*}
\Area_h(S) \geq \frac{2\pi\left|\chi(S)\right|}{(1-k)}\ ,
\end{equation*}
with equality holding if and only if $\sect_h=-1$ over $TS$. This proves the first assertion.

Since the surfaces studied here are oriented in such a way that their principal curvatures are always positive, the algebraic-geometric mean inequality yields
\begin{equation*}
H\geq k^{\frac{1}{2}}\ ,
\end{equation*}
so that
\begin{equation*}
W_h^p(S) \geq k^{p/2}\Area_h(S)\ .
\end{equation*}
Furthermore, equality holds if and only if $H$ is constant and equal to $k^{\frac{1}{2}}$, that is, if and only if $S$ is totally umbilical, and this proves the second assertion.
\end{proof}

\subsection{Almost totally umbilical $k$-disks in hyperbolic space}\label{ss.totally_umb} Consider first the case where $h:=h_0$ is the hyperbolic metric on $\tilde{X}$. Fix $k\in(0,1)$ and $p>0$. We show that, as $\eps$ tends to $0$, $k$-disks in $(\tilde{X},h_0)$ spanned by $(1+\eps)$-quasicircles become uniformly close to totally umbilical surfaces in the sense that their principal curvatures become uniformally close to $k^{\frac{1}{2}}$. This is the $k$-surface analogue of the result \cite{Seppi} of Seppi concerning minimal surfaces spanned by almost round quasicircles. However, in the present case, the proof is simpler.

\begin{lemma}\label{l.almost_tot_umbilical}
For all $\delta>0$ there exists $\eps>0$ having the property that, for every oriented $(1+\eps)$-quasicircle $c$, if $H$ denotes the mean curvature of the unique $k$-disk in $(\tilde{X},h_0)$ spanned by $c$, then
\begin{equation*}
k^{\frac{1}{2}}\leq H\leq k^{\frac{1}{2}}(1+\delta)\ .
\end{equation*}
\end{lemma}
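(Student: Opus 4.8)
The plan is to prove this by a compactness argument, exploiting the smooth conjugacy between $\alpha_k$ and $\alpha_0$ provided by the frame flow in Theorem \ref{t.homogeneous_model}. The key geometric input is that, in hyperbolic space, $1$-quasicircles are exactly the round circles (property $(2)$ of \S \ref{sss.PSL_quasicircles}), and the $k$-disk spanned by a round circle is a totally umbilical $k$-surface, whose mean curvature is identically $k^{1/2}$ by the discussion of the homogeneous model. So the statement asserts a kind of continuity of the mean curvature functional as the boundary quasicircle degenerates to a round circle.

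First I would argue by contradiction. Suppose the conclusion fails for some fixed $\delta>0$: then there is a sequence $\eps_n\to 0$ and oriented $(1+\eps_n)$-quasicircles $c_n$ such that the $k$-disk $D_n:=\rD_{k,h_0}(c_n)$ has a point $p_n$ where the mean curvature satisfies $H(p_n)> k^{1/2}(1+\delta)$. (The lower bound $H\geq k^{1/2}$ is automatic from Lemma \ref{lem_lower_bounds_energy}(2), or directly from the arithmetic-geometric mean inequality applied to the principal curvatures whose product is $k$, so only the upper bound can fail.) Using cocompactness of the $\Pi$-action on $\MKD_{k,h_0}(C)$ (\S \ref{sss.space_k_discs}), I would translate each marked disk $(\hat D_n, n(p_n))$ by an element $\gamma_n\in\Pi$ so that the base frames lie in a fixed compact fundamental domain. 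After passing to a subsequence, the normalized quasicircles $\gamma_n\cdot c_n$ then live in $\QC^+(1+\eps_n)$ with $\eps_n\to 0$, and by the non-degeneracy/compactness property $(3)$ of \S \ref{sss.PSL_quasicircles} they converge in the Hausdorff topology to a $1$-quasicircle, that is, a round circle $c_\infty\in\cC^+$.

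Next I would invoke the continuity of the solution to the asymptotic Plateau problem, Theorem \ref{th.As_plateau_problem}: since $\gamma_n\cdot c_n\to c_\infty$ in $\JC^+$, the spanning $k$-disks converge, $\rD_{k,h_0}(\gamma_n\cdot c_n)\to\rD_{k,h_0}(c_\infty)$ in the $C^\infty_\loc$-topology. The limit $D_\infty:=\rD_{k,h_0}(c_\infty)$ is the $k$-disk spanned by a round circle, hence the Gauss lift of a totally umbilical plane in $(\tilde X,h_0)$ with constant mean curvature $k^{1/2}$ everywhere, by the homogeneous model of Theorem \ref{t.homogeneous_model}. Because convergence is $C^\infty_\loc$ and the base points have been arranged to stay in a compact region, the second fundamental forms converge, so the translated mean curvatures at the marked points converge to $k^{1/2}$. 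This contradicts the strict inequality $H(p_n)>k^{1/2}(1+\delta)$, which is $\Pi$-invariant since $\Pi$ acts by isometries of $h_0$, and completes the argument.

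The main obstacle is making the last continuity step fully rigorous, namely ensuring that the $C^\infty_\loc$-convergence of the disks genuinely controls the mean curvature \emph{at the marked points} uniformly. This requires that the marked points $n(p_n)$, after the $\Pi$-translation, do not escape to infinity within the limiting disk, which is exactly what cocompactness of the $\Pi$-action on $\MKD_{k,h_0}(C)$ guarantees; one must verify that the compactness furnished there matches the $C^\infty_\loc$ topology in which Theorem \ref{th.As_plateau_problem} gives continuity, so that convergence of disks upgrades to convergence of their principal curvatures at a uniformly bounded base point. Once this matching of topologies is in hand, the remainder is the soft contradiction sketched above; the authors' remark that the proof is simpler than Seppi's reflects that in constant curvature the limiting object is explicitly totally umbilical, so no delicate estimate on the rate of umbilicity is needed, only the qualitative limit.
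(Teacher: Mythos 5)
Your argument is essentially the paper's proof: the lower bound via the arithmetic--geometric mean inequality applied to the principal curvatures, then a contradiction argument that translates the marked disks so the base points stay in a fixed compact set, extracts a Hausdorff-convergent subsequence of boundary quasicircles, and uses the continuity of $c\mapsto\rD_{k,h_0}(c)$ from Theorem~\ref{th.As_plateau_problem} to pass to a totally umbilical limit disk of constant mean curvature $k^{1/2}$. Whether the translations $\gamma_n$ are taken in $\Pi$ (using cocompactness, as you do) or in the full isometry group of $(\tilde X,h_0)$ (as the paper does) is immaterial.

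The one step you assert without justification is that the translated quasicircles $\gamma_n\cdot c_n$ converge to a \emph{round circle} rather than degenerating. Property $(3)$ of \S\ref{sss.PSL_quasicircles} is only a criterion: a closed subset of $\QC^+(C)$ is compact if and only if it does not accumulate on a singleton, so you must still rule out accumulation on a single ideal point $q_\infty\in\CP$. The paper does this explicitly: if $\gamma_n\cdot c_n\to\{q_\infty\}$, then the disks $\gamma_n\cdot D_n$, which all meet a fixed compact subset of $\Hyp$, would converge to a $k$-surface with ideal boundary the singleton $\{q_\infty\}$, which is impossible by Proposition~2.5.3 of \cite{LabourieInvent}. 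You should supply this step (or an equivalent convexity argument); once it is in place, the remainder of your sketch coincides with the paper's proof.
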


\begin{proof}
The first inequality follows immediately from the algebraic-geometric mean inequality.

Suppose now that the second inequality does not hold. There exists $\delta>0$, a sequence $(\eps_n)_{n\in\N}$ converging to $0$, and a sequence $(D_n,p_n)_{n\in\N}$ of marked $k$-disks such that, for all $n$, the curve $c_n:=\bord D_n$ is a $(1+\eps_n)$-quasicircle, and the mean curvature $H_n$ of $D_n$ satisfies
\begin{equation}\label{eq.umbilical}
\liminf_{n\to\infty} H_n(p_n)>(1+\delta)k^{\frac{1}{2}}\ .
\end{equation}
There exists a sequence $(\gamma_n)_{n\in\N}$ of isometries of $(\tilde{X},h_0)$ such that, for all $n$, $\gamma_n\cdot p_n$ lies within some fixed compact set. Upon extracting a subsequence, we may suppose that the sequence $(\gamma_n\cdot c_n)_{n\in\N}$ of quasicircles converges in the Hausdorff sense, either to a round circle $c_\infty$, say, or to a single ideal point $q_\infty$, say. The latter is impossible, for otherwise the sequence $(\gamma_n\cdot D_n)_{n\in\N}$ would converge to a $k$-surface in $(\tilde{X},h_0)$ with ideal boundary $\{q_\infty\}$, which is absurd by \cite[Proposition 2.5.3]{LabourieInvent}. By Theorem \ref{th.As_plateau_problem}, the sequence $(\gamma_n\cdot D_n)_{n\in\N}$ then converges smoothly to a $k$-disk $D_\infty$, say, of $(\tilde{X},h_0)$ whose ideal boundary is the round circle $c_\infty$. In particular, $D_\infty$ is totally umbilical, so that its mean curvature is everywhere equal to $k^{\frac{1}{2}}$ (see \S \ref{sss.FPP}), contradicting \eqref{eq.umbilical}. This is absurd, and the result follows.
\end{proof}

By Gauss' equation, every $k$-surface of $(\tilde{X},h_0)$ has intrinsic curvature equal to $(k-1)$. Upon combining Lemma \ref{l.almost_tot_umbilical} with the Gauss--Bonnet theorem, we therefore obtain the following result.

\begin{corollary}\label{corollarouille}
For all $0<k<1$, for all $p>0$, for every sequence $(\eps_n)_{m\in\N}$ of positive numbers that tends to zero, and every sequence $(S_n)_{n\in\N}$ of closed  $(1+\eps_n)$-quasi-Fuchsian surfaces in $(X,h_0)$,
\begin{equation*}
\lim_{n\to\infty}\frac{k^{-p/2}}{\Area_{h_0}(S_n)}\int_{S_n} H_n^p d\Area_{h_0}=1\ ,
\end{equation*}
and
\begin{equation*}
\lim_{n\to\infty}\frac{(1-k)k^{-p/2}}{2\pi|\chi(S_n)|}\int_{S_n} H_n^p d\Area_{h_0}=1\ .
\end{equation*}
That is, the marked normalized $p$-energy spectrum of $k$-surfaces in $(\tilde{X},h_0)$ is asymptotic to the marked area spectrum of $k$-surfaces in this space.
\end{corollary}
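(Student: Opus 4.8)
The plan is to combine the uniform pinching of the mean curvature furnished by Lemma~\ref{l.almost_tot_umbilical} with a rigid computation of the area via Gauss--Bonnet; the statement is then a squeeze argument.

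First I would record the exact area identity available in constant curvature. Since $h_0$ is hyperbolic, Gauss's equation gives that every $k$-surface in $(\tilde{X},h_0)$ has intrinsic curvature identically equal to $k-1$. Applying the Gauss--Bonnet theorem to the closed surface $S_n$ therefore yields $2\pi\chi(S_n)=(k-1)\Area_{h_0}(S_n)$, and since $\chi(S_n)<0$ and $k<1$ this gives
\[
\Area_{h_0}(S_n)=\frac{2\pi|\chi(S_n)|}{1-k}\ .
\]
In particular the two displayed limits differ only by the substitution of this identity, so it suffices to establish the first one.

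Next I would exploit that the surfaces are becoming Fuchsian. The surface $S_n$ is the quotient of the $k$-disk $D_n:=\rD_{k,h_0}(c_n)$ spanned by the $(1+\eps_n)$-quasicircle $c_n=\bord\Gamma_n$, and since $\Gamma_n$ acts by isometries the mean curvature $H_n$ descends to $S_n$. Given $\delta>0$, Lemma~\ref{l.almost_tot_umbilical} supplies $\eps=\eps(\delta)>0$ such that the mean curvature of \emph{any} $k$-disk spanned by a $(1+\eps)$-quasicircle lies in $[k^{1/2},k^{1/2}(1+\delta)]$ at every point. As $\eps_n\to 0$, for all sufficiently large $n$ we have $\eps_n<\eps(\delta)$, whence $k^{1/2}\leq H_n\leq k^{1/2}(1+\delta)$ pointwise on $S_n$.

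Finally I would integrate this pinching. Raising to the $p$-th power and integrating against $d\Area_{h_0}$ gives
\[
k^{p/2}\Area_{h_0}(S_n)\leq\int_{S_n}H_n^p\,d\Area_{h_0}\leq k^{p/2}(1+\delta)^p\Area_{h_0}(S_n)\ ,
\]
so that the normalized ratio $k^{-p/2}\Area_{h_0}(S_n)^{-1}\int_{S_n}H_n^p\,d\Area_{h_0}$ is trapped in $[1,(1+\delta)^p]$ for all large $n$. Letting $\delta\to 0$ establishes the first limit, and the second follows by substituting the area identity. I do not expect a genuine obstacle here beyond bookkeeping; the only point requiring care is that Lemma~\ref{l.almost_tot_umbilical} provides a \emph{pointwise} (hence uniform over the whole disk) bound on $H$, which is precisely what lets the pinching pass through the integral rather than merely controlling an average of $H_n^p$.
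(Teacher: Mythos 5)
Your proposal is correct and follows essentially the same route as the paper: Gauss' equation plus Gauss--Bonnet give the exact area identity $\Area_{h_0}(S_n)=2\pi|\chi(S_n)|/(1-k)$, and the uniform pointwise pinching $k^{1/2}\leq H_n\leq k^{1/2}(1+\delta)$ from Lemma~\ref{l.almost_tot_umbilical} is integrated and squeezed as $\delta\to 0$. The paper states this argument only in outline, and your write-up supplies exactly the intended details.
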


\section{The marked energy spectrum}

\subsection{Proof of Theorem \ref{thm.Rigidity_hyp_energy_spectrum}}\label{ss.Rigidity}
Now let $h$ be any riemannian metric on $X$ with sectional curvature pinched between $-1$ and $-a$ for some $0<a<1$. Suppose, furthermore, that the marked $p$-energy spectrum of $k$-surfaces of $h$ is equal to that of $h_0$, that is, that equality holds in \eqref{eqn.rigidity_energy}. We will prove that $h$ is isometric to $h_0$. By Mostow's rigidity theorem \cite{Mostow}, it suffices to show that $h$ has constant sectional curvature equal to $-1$.

\subsubsection{Sectional curvature approaches $-1$ over asymptotically Fuchsian $k$-surfaces}Let $([\Gamma_n])_{n\in\N}\in(\QF^+)^\N$ be a sequence of conjugacy classes of quasi-Fuchsian surface subgroups of $\Pi$. For all $n$, let $c_n$ denote the ideal boundary of $\Gamma_n$. Suppose that there exists a sequence $(\eps_n)_{n\in\N}$ converging to $0$ such that, for all $n$, $c_n$ is a $(1+\eps_n)$-quasicircle. Suppose, furthermore, that $(c_n)_{n\in\N}$ converges in the Hausdorff sense to some round circle $c_\infty$, say. For all $n$, denote
\begin{equation*}
S_n=\rD_{k,h}(c_n)/\Gamma_n\dans X\ ,
\end{equation*}
and let $H_n$ denote the mean curvature of this surface. Let $\sigma:S_hX\to\R$ denote the map which associates to every $v\in S_hX$ the sectional curvature of $h$ along the plane $v^\perp$.

Recall the definition of the area measure $\overline{\mu}_{S_n,h}$ from \S \ref{sss.lifts}.

\begin{lemma}\label{l.sect_approaches-1}
\begin{equation*}
\lim_{n\to\infty} \int_{S_hX}(1+\sigma)d\overline{\mu}_{S_n,h}=0\ .
\end{equation*}
\end{lemma}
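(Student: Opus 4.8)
The plan is to convert the integral into the ratio of the $h$-area of $S_n$ to its topological lower bound, and then to pin that ratio down using the energy hypothesis. First I would observe that $\overline{\mu}_{S_n,h}$ is, up to the normalization $1/2\pi|\chi(S_n)|$, the push-forward of the $h$-area measure of $S_n$ under its Gauss map, and that $\sigma$ evaluated at the unit normal $n(x)$ is exactly $\sect_h|_{T_xS_n}$. Hence
$$\int_{S_hX}(1+\sigma)\,d\overline{\mu}_{S_n,h}=\frac{1}{2\pi|\chi(S_n)|}\Big(\Area_h(S_n)+\int_{S_n}\sect_h|_{TS_n}\,d\Area_h\Big).$$
Since $S_n$ has extrinsic curvature $k$, Gauss' equation gives $\sect_h|_{TS_n}=\kappa_{int}-k$, and Gauss--Bonnet gives $\int_{S_n}\kappa_{int}\,d\Area_h=2\pi\chi(S_n)=-2\pi|\chi(S_n)|$. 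Substituting collapses the expression to
$$\int_{S_hX}(1+\sigma)\,d\overline{\mu}_{S_n,h}=\frac{(1-k)\Area_h(S_n)}{2\pi|\chi(S_n)|}-1.$$
So it suffices to prove that $\Area_h(S_n)$ is asymptotic to the bound $2\pi|\chi(S_n)|/(1-k)$ of Lemma \ref{lem_lower_bounds_energy}(1).

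Next I would feed in the hypothesis. Write $S_n^0:=S_{k,h_0}([\Gamma_n])$ for the representative of the same class in the hyperbolic metric; since $S_n$ and $S_n^0$ have the same fundamental group $\Gamma_n$, they share the same Euler characteristic, which I abbreviate $\chi_n$. The assumed equality of marked $p$-energy spectra \eqref{eqn.rigidity_energy} reads $W^p_h(S_n)=W^p_{h_0}(S_n^0)$. Now $S_n^0$ is $(1+\eps_n)$-quasi-Fuchsian in $(X,h_0)$ with $\eps_n\to 0$, so the second limit of Corollary \ref{corollarouille} applies and yields
$$\frac{(1-k)\,k^{-p/2}}{2\pi|\chi_n|}\,W^p_{h_0}(S_n^0)\To 1.$$
Combined with the hypothesis, this says that $(1-k)k^{-p/2}W^p_h(S_n)/2\pi|\chi_n|\to 1$. (Here I take $p>0$, as Corollary \ref{corollarouille} requires; for $p=0$ the energy spectrum is the area spectrum and the statement reduces to Theorem \ref{th.rigidity_area}.)

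Finally I would squeeze. Lemma \ref{lem_lower_bounds_energy}(2) gives $k^{p/2}\Area_h(S_n)\leq W^p_h(S_n)$, so that
$$1\leq\frac{(1-k)\Area_h(S_n)}{2\pi|\chi_n|}\leq\frac{(1-k)\,k^{-p/2}W^p_h(S_n)}{2\pi|\chi_n|},$$
where the left inequality is Lemma \ref{lem_lower_bounds_energy}(1), and the right-hand side tends to $1$ by the previous paragraph. Squeezing forces $(1-k)\Area_h(S_n)/2\pi|\chi_n|\to 1$, and by the identity of the first paragraph this is precisely $\int_{S_hX}(1+\sigma)\,d\overline{\mu}_{S_n,h}\to 0$, as claimed.

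Within this lemma the work is essentially bookkeeping: the substantive inputs---the almost-umbilicity of nearly Fuchsian $k$-disks in $\Hyp$ underlying Corollary \ref{corollarouille}, and the sharp intrinsic-area bound---are already established. The only points demanding genuine care are the measure-theoretic identity of the first step (verifying that integrating $\sigma$ against $\overline{\mu}_{S_n,h}$ really recovers the total integrated tangential sectional curvature) and the sign bookkeeping coming from $\chi_n<0$, which is what makes the telescoping collapse exactly to $(1-k)\Area_h(S_n)/2\pi|\chi_n|-1$.
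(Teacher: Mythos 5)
Your proposal is correct and follows essentially the same route as the paper: both arguments combine the spectral hypothesis with Corollary \ref{corollarouille}, the pointwise bound $H\geq k^{1/2}$, Gauss' equation and Gauss--Bonnet, and the lower curvature bound $\sect_h\geq-1$ (which is what makes $1+\sigma\geq 0$, equivalently your left-hand squeeze inequality). The only difference is organizational --- you first collapse the integral to the area ratio $(1-k)\Area_h(S_n)/2\pi|\chi_n|-1$ and then squeeze, whereas the paper runs the same inequalities as a single chain bounding $\frac{1}{2\pi|\chi_n|}\int_{S_n}H_n^p\,d\Area_h$ from below --- and this is a matter of presentation, not substance.
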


\begin{remark}
Note that we use here the fact that the sectional curvature of $h$ is bounded below by $-1$.
\end{remark}

\begin{proof}
Indeed, for all $n$, denote
\begin{equation*}
S_n^0=\text D_{k,h_0}(c_n)/\Gamma_n\ ,
\end{equation*}
and let $H_n^0$ denote the mean curvature of this surface with respect to the hyperbolic metric $h_0$.

By hypothesis, the marked $p$-energy spectra of $h$ and $h_0$ coincide so that, for all $n$,
\begin{equation*}
\int_{S^0_n}(H_n^0)^p d\Area_{h_0}=\int_{S_n} H_n^p d\Area_{h}\ ,
\end{equation*}
and it follows by Corollary \ref{corollarouille} that
\begin{equation}\label{eq.rigidity_marked_spectra}
\lim_{n\to\infty}\frac{1}{2\pi|\chi(S_n)|}\int_{S_n} H_n^p d\Area_{h}=\frac{k^{p/2}}{(1-k)}\ .
\end{equation}
On the other hand, the algebraic-geometric mean inequality yields
\begin{eqnarray*}
\int_{S_n} H_n^p d\Area_{h}&\geq&\int_{S_n} k^{p/2}d\Area_{h}\\
&=&\frac{k^{p/2}}{(1-k)}\bigg(\int_{S_n}(1+\sect_h|_{TS_n})d\Area_{h}\\
& &\qquad\qquad + \int_{S_n} (-\sect_h|_{TS_n} - k)d\Area_{h}\bigg)\ .
\end{eqnarray*}
By Gauss' equation, for all $n$, the intrinsic curvature of $S_n$ is equal to $k+\sect_h|_{TS_n}$, so that, by the Gauss--Bonnet theorem,
\begin{equation*}
\int_{S_n} H_n^p d\Area_{h} \geq \frac{k^{p/2}}{(1-k)}\bigg(\int_{S_n}(1+\sect_h|_{TS_n})d\Area_{h} - 2\pi\chi(S_n)\bigg)\ ,
\end{equation*}
and dividing by $2\pi\left|\chi(S_n)\right|$ yields
\begin{equation}\label{eq.GB}
\frac{1}{2\pi|\chi(S_n)|}\int_{S_n} H_n^p d\Area_{h}\geq \frac{k^{p/2}}{(1-k)}\bigg(\int_{S_hX}(1+\sigma)d\overline\mu_{S_n,h}+1\bigg)\ .
\end{equation}
Taking limits in \eqref{eq.GB}, and bearing in mind \eqref{eq.rigidity_marked_spectra}, we therefore obtain
\begin{equation*}
\lim_{n\rightarrow\infty}\int_{S_hX}(1+\sigma)d\overline\mu_{S_n,h}\leq 0\ .
\end{equation*}
However, by hypothesis,
\begin{equation*}
(1+\sigma)\geq 0\ .
\end{equation*}
The above integral is therefore also non-negative, and the result follows.
\end{proof}

\subsubsection{Using the equidistribution} We now make use of the equidistribution result of Theorem \ref{KM_equidistrib}. Indeed, let $([\Gamma_n])_{n\in\N}$ be a sequence of conjugacy classes of oriented, quasi-Fuchsian subgroups of $\Pi$ as in Theorem \ref{KM_equidistrib}.
\begin{itemize}
\item By Lemma \ref{lem_comparison_area_laminar}, for all $n$, $(a-k)\overline\mu_{S_n,h}\leq\mu_{S_n}\leq(1-k)\overline\mu_{S_n,h}$;
\item By Theorem \ref{KM_equidistrib}, the weak-$\ast$ limit of the sequence $\mu_{S_n}$ is a probability measure $\mu_\infty$ on $S_hX$ of full support.
\end{itemize}
By Lemma \ref{l.sect_approaches-1},
\begin{equation*}
\int(1+\sigma)d\mu_\infty=0\ ,
\end{equation*}
and since $1+\sigma\geq 0$,
\begin{equation*}
\sigma=-1\qquad \mu_\infty\text{-a.e.}
\end{equation*} in $S_hX$. Since $\mu_\infty$ has full support, it follows that $\sigma=-1$ over $S_hM$. In other words, $h$ has sectional curvature everywhere equal to $-1$, so that, by Mostow's rigidity theorem, $h$ and $h_0$ are isometric.

\subsection{Proof of Theorem \ref{thm.asymptotic_spectra}}\label{ss.proof_of_thm_B}

Suppose now that the sectional curvature of $X$ is pinched between $-b$ and $-a$ for some $0<a\leq b$. We first assume that the marked normalized energy spectrum is asymptotic to the marked area spectrum, that is
\begin{equation}\label{eq.hypothesis}
\lim_{n\to\infty}\frac{1}{\Area_h(S_n)}\int_{S_n}\big(k^{-\frac{1}{2}}H_n-1\big)d\Area_h=0\ ,
\end{equation}
for every sequence $(S_n)_{n \in\N}$ of $(1+\eps_n)-$quasi-Fuchsian surfaces such that $\eps_n\to 0$ as $n\to\infty$.

Given $C\geq 1$, let $H:\MKD_{k,h}(C)\to\R$ denote the map which associates to every $(D,p)\in \MKD_{k,h}(C)$ the mean curvature of the oriented $k$-disk $D$ at the point $p$. This map is obviously $\Pi$-invariant, and we denote the function that it defines over the quotient $\MKD_{k,h}(C)/\Pi$ also by $H$. When $C=1$, $H$ is the map which associates to each $v\in S_hX$ the mean curvature of the leaf of $\cF$ containing $v$. Theorem \ref{thm.asymptotic_spectra} will follow from the following result.

\begin{prop}\label{p.tot_asymptotic}
$H=k^{\frac{1}{2}}$ over $S_hX$.
\end{prop}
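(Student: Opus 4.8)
The plan is to regard $k^{-1/2}H-1$ as a continuous, pointwise nonnegative function on the compact space $S_hX\cong\MKD_{k,h}(1)/\Pi$, and to deduce from the hypothesis \eqref{eq.hypothesis} that its integral against a fully supported laminar measure vanishes; full support then forces it to vanish identically.

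First I would record the two elementary facts that drive everything. By the algebraic-geometric mean inequality applied to the positive principal curvatures of a $k$-surface (as in Lemma \ref{lem_lower_bounds_energy}), $H\geq k^{1/2}$ everywhere, so $k^{-1/2}H-1\geq 0$ on $S_hX$; and by the foliated Plateau problem (Theorem \ref{th_foliated_Plateau}) the function $H$ is continuous on the compact manifold $S_hX$, hence bounded. Thus $k^{-1/2}H-1$ is a bounded, continuous, nonnegative function, and it suffices to show that it integrates to zero against a fully supported measure.

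Next I would transcribe \eqref{eq.hypothesis} into the language of the measures of \S\ref{sss.lifts}. Set $M_n:=\overline\mu_{S_n,h}(S_hX)=\Area_h(S_n)/(2\pi|\chi(S_n)|)$. Since $\overline\mu_{S_n,h}$ is, up to the factor $(2\pi|\chi(S_n)|)^{-1}$, the pushforward of $\Area_h$ under the Gauss lift, the left-hand side of \eqref{eq.hypothesis} equals $M_n^{-1}\int_{S_hX}(k^{-1/2}H-1)\,d\overline\mu_{S_n,h}$. A Gauss-equation-plus-Gauss--Bonnet computation (exactly as in Lemma \ref{lem_lower_bounds_energy}) gives $(a-k)\Area_h(S_n)\leq 2\pi|\chi(S_n)|\leq (b-k)\Area_h(S_n)$, that is $(b-k)^{-1}\leq M_n\leq (a-k)^{-1}$. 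Because $M_n$ is bounded, \eqref{eq.hypothesis} upgrades to
\[
\int_{S_hX}(k^{-1/2}H-1)\,d\overline\mu_{S_n,h}\longrightarrow 0 .
\]

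Finally I would pass from the area measures $\overline\mu_{S_n,h}$ to the laminar measures $\mu_{S_n}$, which are the ones for which equidistribution is available. Specializing \eqref{eq.hypothesis} to a Kahn--Markovi\'c sequence $(\Gamma_n)$ as in Theorem \ref{KM_equidistrib}, the laminar probability measures $\mu_{S_n}$ converge weak-$*$ to a fully supported laminar probability measure $\mu_\infty$. By the comparison Lemma \ref{lem_comparison_area_laminar}, $\mu_{S_n}\leq (b-k)\overline\mu_{S_n,h}$, and since the integrand is nonnegative,
\[
0\leq\int_{S_hX}(k^{-1/2}H-1)\,d\mu_{S_n}\leq (b-k)\int_{S_hX}(k^{-1/2}H-1)\,d\overline\mu_{S_n,h}\longrightarrow 0 .
\]
Weak-$*$ convergence together with the boundedness and continuity of $k^{-1/2}H-1$ then gives $\int_{S_hX}(k^{-1/2}H-1)\,d\mu_\infty=0$; as the integrand is continuous, nonnegative, and $\mu_\infty$ has full support, it vanishes identically, i.e.\ $H=k^{1/2}$ on $S_hX$. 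The same argument applies to the hypothesis coming from a general pair $p\neq q$, replacing $k^{-1/2}H-1$ by the corresponding continuous nonnegative integrand. I expect the only delicate point to be the normalization bookkeeping of the previous paragraph: equidistribution, and hence full support of the limit, is available only for the laminar measures $\mu_{S_n}$, whereas \eqref{eq.hypothesis} is normalized by area, so the two-sided bound on $M_n$ together with Lemma \ref{lem_comparison_area_laminar} is what bridges the two; the concluding full-support step is identical to the one used in the proof of Theorem \ref{thm.Rigidity_hyp_energy_spectrum}.
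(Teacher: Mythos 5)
Your proposal is correct and follows essentially the same route as the paper: apply the hypothesis to a Kahn--Markovi\'c sequence, use the comparison between the area measures $\overline\mu_{S_n,h}$ and the laminar measures $\mu_{S_n}$ (Lemma \ref{lem_comparison_area_laminar} together with the Gauss--Bonnet normalization) to transfer the vanishing of the area-normalized integrals to the laminar ones, and conclude by weak-$*$ convergence to the fully supported limit plus continuity and nonnegativity of $k^{-1/2}H-1$. Your explicit bookkeeping of the normalization constant $M_n$ is just a slightly more verbose version of the paper's single chain of inequalities.
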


\begin{proof}[Proposition \ref{p.tot_asymptotic} implies Theorem \ref{thm.asymptotic_spectra}]
Indeed, let $\hat L$ be a leaf of $\cF_{k,h}$, and let $L$ denote its projection in $S_hX$. Let $v$ be a point of $\hat L$, and let $p$ denote its base point. If $H=k^{\frac{1}{2}}$ at $v$ then both principal curvatures of $L$ at $p$ coincide, that is $p$ is an umbilic point of $L$. Proposition \ref{p.tot_asymptotic} therefore implies that the projection to $X$ of every leaf of $\cF$ is totally umbilical and has constant mean curvature. The manifold $X$ therefore satisfies the \emph{axiom of $2$-spheres} of Leung--Nomizu (see \cite{Leung_Nomizu}), namely \emph{for every point $p$ of $X$ and every plane $P$ of $T_pX$, there exists a totally umbilical, constant mean curvature surface $S$ tangent to $P$ at $p$.} It is proven in \cite{Leung_Nomizu} that $X$ has constant sectional curvature whenever this property is satisfied, and Theorem \ref{thm.asymptotic_spectra} therefore follows.
\end{proof}

\begin{proof}[Proof of Proposition \ref{p.tot_asymptotic}] We apply \eqref{eq.hypothesis} to the Kahn-Markovi\'c sequence of Theorem \ref{KM_equidistrib}. For all $n$, let $S_n,\Gamma_n$ and $c_n$ be as in that theorem. Let $\lambda_{S_n}$ denote the area measure \emph{for the hyperbolic metric} in $S_n$. By \eqref{eq.ident_measures} and the Gauss-Bonnet theorem,
\begin{equation*}
\int_{\MKD_{k,h}(1+\eps_n)/\Pi}\big(k^{-\frac{1}{2}}H-1\big)d\mu_{k,h}(c_n)=
\frac{1}{\lambda_{S_n}(S_n)}\int_{S_n}\big(k^{-\frac{1}{2}}H-1\big)d\lambda_{S_n}\ .
\end{equation*}
Thus, by Lemma \ref{lem_comparison_area_laminar},
\begin{equation*}
\multiline{
\int_{\MKD_{k,h}(1+\eps_n)/\Pi}\big(k^{-\frac{1}{2}}H-1\big)d\mu_{k,h}(c_n)\cr
\qquad\qquad\qquad\qquad\leq\frac{(b-k)}{(a-k)}\frac{1}{\Area_h(S_n)}\int_{S_n}\big(k^{-\frac{1}{2}}H_n-1\big)d\Area_h\ .\cr}
\end{equation*}
By \eqref{eq.hypothesis}, the last integral converges to $0$. However, for the Kahn-Markovi\'c sequence, $(\mu_{k,h}(c_m))_{n\in\mathbb{N}}$ converges to a measure $\mu_\infty$, say, of full support over $\MKD_{k,h}(1)/\Pi\simeq S_hX$. Hence,
\begin{equation*}
\int_{S_hX}\big(k^{-\frac{1}{2}}H-1\big)d\mu_\infty = 0\ .
\end{equation*}
Since $H$ is continuous, and since $\mu_\infty$ has full support, it follows that $H=k^{\frac{1}{2}}$ everywhere, and this completes the proof.
\end{proof}

For the general case, it suffices to observe that the same argument also show that the sectional curvature is constant whenever the normalized marked $p$ and $q$ spectra are asymptotic. Indeed, for any closed surface $S$,
\begin{equation*}
\multiline{
\frac{\int_S k^{-\frac{p}{2}}H^p d\Area_h}{\int_S k^{-\frac{q}{2}}H^qd\Area_h}-1\cr
\qquad\qquad\qquad\qquad=\frac{1}{\int_Sk^{-\frac{q}{2}}H^qd\Area_h}\int_S\big(k^{-\frac{p-q}{2}}H^{p-q}-1\big)k^{-\frac{q}{2}} H^qd\Area_h\ ,\cr}
\end{equation*}
and the result follows as before.

\section{Asymptotic counting according to energy}
\subsection{Energy entropy for $k$-surfaces}
\subsubsection{The energy entropy} Given $p\geq 0$ we define
\begin{equation*}
\Ent^p_{k}(X,h)=\lim_{\eps\to 0}\liminf_{w\to\infty}\frac{1}{w\log w}\log\#\left\{[\Gamma]\in\QF(1+\eps)\ \bigg|\ W^{p}_{h}(S_{k,h}([\Gamma]))\leq w\right\}\ .
\end{equation*}
This quantity is analogous to the minimal surface entropy studied in \cite{CMN,ln21}. The \emph{area entropy} used in \cite{ALS} is more elementary since it does not involve a double limit. However, as in the case of the area entropy of minimal surfaces, it is hard to imagine how to compute the energy entropy without using such a double limit, even for the hyperbolic metric.

\begin{lemma}[Entropy in the hyperbolic case] If $h_0$ is the hyperbolic metric on $X$, then for all $p\geq 0$ and $0<k<1$,
\begin{equation*}
\Ent^p_{k}(X,h_0)=\frac{k^{-\frac{p}{2}}(1-k)}{2\pi}\ .
\end{equation*}
\end{lemma}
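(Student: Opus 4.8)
The plan is to reduce the energy count to the genus count of Kahn--Markovi\'c (Theorem~\ref{th.KM2}), using the uniform almost-umbilicity of nearly-Fuchsian $k$-disks from Lemma~\ref{l.almost_tot_umbilical} to convert an energy constraint into a two-sided constraint on genus.

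First I would record the exact relation between area and genus in the hyperbolic setting. By Gauss' equation every closed $k$-surface $S$ in $(X,h_0)$ has constant intrinsic curvature $k-1$, so Gauss--Bonnet gives $\Area_{h_0}(S)=2\pi|\chi(S)|/(1-k)=2\pi(2\mathrm{g}(S)-2)/(1-k)$. Thus, once the mean curvature is pinched, the energy is controlled by the genus alone. To pinch it, fix $\delta>0$ and let $\eps=\eps(\delta)>0$ be as in Lemma~\ref{l.almost_tot_umbilical}, so that every closed $(1+\eps)$-quasi-Fuchsian $k$-surface obeys $k^{1/2}\leq H\leq k^{1/2}(1+\delta)$ pointwise. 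Setting $A:=2\pi k^{p/2}/(1-k)$ and integrating $H^p$ against the area, these two facts combine into the uniform sandwich
\[
A\,(2\mathrm{g}(S)-2)\ \leq\ W^p_{h_0}(S)\ \leq\ A(1+\delta)^p\,(2\mathrm{g}(S)-2),
\]
valid simultaneously for all such $S$.

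Next I would translate the energy threshold into genus thresholds. Write $N(\eps,w):=\#\{[\Gamma]\in\QF(1+\eps):W^p_{h_0}(S_{k,h_0}([\Gamma]))\leq w\}$ and $M(\eps,G):=\#\{[\Gamma]\in\QF(1+\eps):\mathrm{g}(\Gamma)\leq G\}$. From the sandwich, $W^p_{h_0}\leq w$ forces $\mathrm{g}\leq G_+(w):=w/(2A)+1$, while $\mathrm{g}\leq G_-(w):=w/(2A(1+\delta)^p)+1$ guarantees $W^p_{h_0}\leq w$; hence for every $\eps\leq\eps(\delta)$,
\[
M(\eps,G_-(w))\ \leq\ N(\eps,w)\ \leq\ M(\eps,G_+(w)).
\]
Applying Theorem~\ref{th.KM2}, which yields $\log M(\eps,G)=2G\log G\,(1+o(1))$ as $G\to\infty$ (for the fixed $C=1+\eps>1$), together with $G_\pm(w)\sim\mathrm{const}\cdot w$ and $\log G_\pm(w)=\log w+O(1)$, both sides acquire genuine limits after division by $w\log w$: the upper bound tends to $1/A$ and the lower bound to $1/(A(1+\delta)^p)$. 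This squeezes $\liminf_{w\to\infty}(w\log w)^{-1}\log N(\eps,w)$ between $1/(A(1+\delta)^p)$ and $1/A$ for every $\eps\leq\eps(\delta)$.

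Finally I would let $\eps\to 0$. Since these bounds hold for all sufficiently small $\eps$ (depending only on $\delta$), the limit as $\eps\to 0$ remains trapped between $1/(A(1+\delta)^p)$ and $1/A$; as this holds for every $\delta>0$ and $1/(A(1+\delta)^p)\to 1/A$ as $\delta\to 0$, the double limit equals $1/A=k^{-p/2}(1-k)/(2\pi)$. The main delicacy is the interplay of the two limits: the argument works only because Lemma~\ref{l.almost_tot_umbilical} provides \emph{uniform} (not merely asymptotic) pinching of $H$ over \emph{all} of $\QF(1+\eps)$, so the genus sandwich holds for every counted surface at once; the rest is bookkeeping to confirm that the $o(1)$ from Kahn--Markovi\'c and the $O(1)$ in $\log G_\pm(w)=\log w+O(1)$ do not perturb the leading constant.
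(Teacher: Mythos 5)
Your proposal is correct and follows essentially the same route as the paper: the lower bound $W^p_{h_0}\geq k^{p/2}\Area_{h_0}$ together with Gauss--Bonnet for the upper entropy bound, and the uniform pinching of $H$ from Lemma~\ref{l.almost_tot_umbilical} (with the $\delta\to0$ limit taken last) for the lower entropy bound, both reduced to Kahn--Markovi\'c's genus count in Theorem~\ref{th.KM2}. Your two-sided sandwich merely makes explicit the bookkeeping that the paper leaves implicit in the phrase ``proceeding as before.''
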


\begin{proof}
We first prove the upper bound
\begin{equation*}
\Ent^p_{k}(X,h_0)\leq\frac{k^{-\frac{p}{2}}(1-k)}{2\pi}\ .
\end{equation*}
Indeed, by Lemma \ref{lem_lower_bounds_energy}, for every $[\Gamma]\in\QF(1+\eps)$,
\begin{equation*}
W_{h_0}^{p}(S_{k,h_0}([\Gamma]))\geq k^{\frac{p}{2}}\Area_{h_0}(S_{k,h_0}([\Gamma]))=\frac{4\pi(g-1)k^{\frac{p}{2}}}{(1-k)}\ ,
\end{equation*}
where $g$ denotes the genus of $S_{k,h_0}([\Gamma])$. It follows that if
\begin{equation*}
W_{h_0}^{p}(S_{k,h_0}([\Gamma])) \leq w\ ,
\end{equation*}
then
\begin{equation*}
g \leq \frac{k^{-\frac{p}{2}}(1-k)w}{4\pi} + 1\ ,
\end{equation*}
and the upper bound now follows by Kahn-Markovi\'c's topological counting result (Theorem \ref{th.KM2}).

We now use Lemma \ref{l.almost_tot_umbilical} to bound the entropy from below
\begin{equation*}
\Ent^p_{k}(X,h_0)\geq\frac{k^{-\frac{p}{2}}(1-k)}{2\pi}\ .
\end{equation*}
Fix $\delta>0$ and let $\eps_0>0$ be as in Lemma \ref{l.almost_tot_umbilical}. For all $\eps<\eps_0$ and for all $[\Gamma]\in\QF(1+\eps)$,
\begin{equation*}
W^{p}_{h_0}(S_{k,h_0}([\Gamma])\leq (1+\delta)^p\frac{4\pi(g-1)k^{\frac{p}{2}}}{(1-k)}\ .
\end{equation*}
It follows that if
\begin{equation*}
g \leq \frac{(1-k)w k^{-{\frac{p}{2}}}}{4\pi(1+\delta)^p} + 1\ ,
\end{equation*}
then
\begin{equation*}
W_{h_0}^{p}(S_{k,h_0}([\Gamma])) \leq w\ ,
\end{equation*}
and, proceeding as before, we show that
\begin{equation*}
\Ent^{p}_{k}(X,h_0)\geq\frac{(1-k)k^{-\frac{p}{2}}}{2\pi(1+\delta)^p}\ .
\end{equation*}
Since $\delta$ is arbitrary, the result follows.
\end{proof}

Note that in order to obtain the upper bound in the previous proof, we only used the inequality $\sect_h\geq -1$. We therefore also have the following result.

\begin{lemma}\label{l_inequality_entropy}
Let $(X,h_0)$ be a closed, $3$-dimensional hyperbolic manifold. Let $h$ be another riemannian metric on $X$ of sectional curvature pinched between $-1$ and $-a$, for some $0<a<1$. For all $p\geq 0$, and for all $0<k<a$,
\begin{equation*}
\Ent^{p}_{k}(X,h)\leq\frac{k^{-\frac{p}{2}}(1-k)}{2\pi}=\Ent^{p}_{k}(X,h_0)\ .
\end{equation*}
\end{lemma}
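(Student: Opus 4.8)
The plan is to reproduce, almost verbatim, the upper-bound half of the proof of the previous lemma, with $h_0$ replaced by $h$ throughout. The crucial point, already flagged in the preceding remark, is that the derivation of the upper bound used the lower curvature bound $\sect_{h_0}\geq -1$ \emph{only}, and never the constancy of the curvature. Since $h$ is pinched between $-1$ and $-a$, every ingredient of that computation carries over unchanged, and the equality $k^{-p/2}(1-k)/(2\pi)=\Ent^p_k(X,h_0)$ is then supplied by the previous lemma itself.

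Concretely, I would proceed as follows. First, since $h$ satisfies $-1\leq\sect_h\leq -a$ and $0<k<a$, Lemma \ref{lem_lower_bounds_energy} applies to $h$: for every closed $k$-surface $S$,
\begin{equation*}
W^p_h(S)\geq k^{p/2}\Area_h(S)\geq k^{p/2}\frac{2\pi|\chi(S)|}{1-k}\ .
\end{equation*}
For a class $[\Gamma]\in\QF(1+\eps)$ whose representative surface $S:=S_{k,h}([\Gamma])$ has genus $g$, we have $|\chi(S)|=2(g-1)$, so this reads $W^p_h(S)\geq 4\pi(g-1)k^{p/2}/(1-k)$. Hence the bound $W^p_h(S)\leq w$ forces $g\leq k^{-p/2}(1-k)w/(4\pi)+1$. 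Consequently the set of classes counted by $\Ent^p_k(X,h)$ at level $w$ is contained in the set of classes in $\QF(1+\eps)$ of genus at most $G(w):=k^{-p/2}(1-k)w/(4\pi)+1$.

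Next I would invoke Kahn--Markovi\'c's topological counting, Theorem \ref{th.KM2}, which gives $\log\#\{\mathrm{g}(\Gamma)\leq G\}=(1+o(1))\,2G\log G$ as $G\to\infty$. Since $G(w)\sim k^{-p/2}(1-k)w/(4\pi)$ and $\log G(w)\sim\log w$, the leading term of $2G(w)\log G(w)$ is $(k^{-p/2}(1-k)/2\pi)\,w\log w$. Dividing the logarithm of the cardinality by $w\log w$, taking $\liminf_{w\to\infty}$ and then $\lim_{\eps\to 0}$, I obtain $\Ent^p_k(X,h)\leq k^{-p/2}(1-k)/(2\pi)$, and the displayed equality identifies this with $\Ent^p_k(X,h_0)$.

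There is no serious obstacle here: the argument is a direct transfer of an already-completed computation, which is exactly why the authors record it immediately after the preceding lemma. The only points requiring care are bookkeeping ones: verifying that Lemma \ref{lem_lower_bounds_energy} genuinely applies under the pinching hypothesis on $h$ (it does, since its hypotheses are precisely $-1\leq\sect_h\leq -a$ and $0<k<a$), and checking that replacing $w$ by the affine quantity $G(w)$ leaves the leading-order asymptotics in Theorem \ref{th.KM2} untouched, which holds because $\log G(w)\sim\log w$.
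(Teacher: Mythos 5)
Your proposal is correct and is precisely the argument the paper intends: the authors give no separate proof, but simply remark that the upper-bound half of the preceding computation used only $\sect\geq -1$ together with Lemma \ref{lem_lower_bounds_energy} and Theorem \ref{th.KM2}, which is exactly the transfer you carry out. Your bookkeeping (the genus bound $g\leq k^{-p/2}(1-k)w/(4\pi)+1$ and the observation that $\log G(w)\sim\log w$ preserves the leading asymptotics) matches the paper's computation in the hyperbolic case verbatim.
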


\subsubsection{Entropy and closed Fuchsian $k$-surfaces} It is worth noting that the $k$-surface energy entropy counts closed \emph{conjugacy classes} of surface subgroups. This has the following consequence.

\begin{theorem}\label{th.totally_umb_implies_big_entropy}
Assume that $-1\leq\sect_h\leq-a$, $p\geq 0$ and $0<k<a$. Assume furthermore that there exists a closed \emph{Fuchsian} surface subgroup $\Gamma\leq\Pi$ such that
\begin{equation*}
W^{p}_{h}(S_{k,h}([\Gamma]))=W^{p}_{h_0}(S_{k,h_0}([\Gamma]))\ .
\end{equation*}
Then
\begin{equation*}
\Ent^{p}_{k}(X,h)=\Ent^{p}_{k}(X,h_0).
\end{equation*}
\end{theorem}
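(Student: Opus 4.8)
The plan is to complement the upper bound of Lemma~\ref{l_inequality_entropy} with the matching lower bound $\Ent^{p}_{k}(X,h)\geq \frac{k^{-p/2}(1-k)}{2\pi}$, by producing, for each $w$, a large supply of Fuchsian subgroups whose representative $k$-surfaces have energy at most $w$. The source of these subgroups will be the finite-index subgroups of the given $\Gamma$, and the point is that the energy equality hypothesis rigidifies the representative of $\Gamma$ enough to control the energy of all its covers.

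First I would unpack the hypothesis geometrically. Write $S:=S_{k,h}([\Gamma])$ and $S_{0}:=S_{k,h_{0}}([\Gamma])$. Since $\Gamma$ is Fuchsian, $\bord\Gamma$ is a round circle, so by \S\ref{sss.FPP} the surface $S_{0}$ is totally umbilical in $(X,h_{0})$; hence, by Lemma~\ref{lem_lower_bounds_energy}, $W^{p}_{h_{0}}(S_{0})=k^{p/2}\Area_{h_{0}}(S_{0})=k^{p/2}\frac{2\pi|\chi(S)|}{1-k}$. The hypothesis $W^{p}_{h}(S)=W^{p}_{h_{0}}(S_{0})$ then forces the chain
\[
W^{p}_{h}(S)\;\geq\;k^{p/2}\Area_{h}(S)\;\geq\;k^{p/2}\frac{2\pi|\chi(S)|}{1-k}
\]
to consist of equalities, so that both equality cases of Lemma~\ref{lem_lower_bounds_energy} hold: $S$ is totally umbilical in $(X,h)$ and $\sect_{h}\equiv -1$ along $TS$. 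In particular $W^{p}_{h}(S)=k^{p/2}\frac{2\pi|\chi(S)|}{1-k}$.

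Next I would pass to finite-index subgroups. A finite-index subgroup $\Gamma'\leq\Gamma$ of index $n$ satisfies $\bord\Gamma'=\bord\Gamma$, so it is again Fuchsian, whence $[\Gamma']\in\QF(1)\subseteq\QF(1+\eps)$ for every $\eps>0$, and $S_{k,h}([\Gamma'])=\rD_{k,h}(\bord\Gamma)/\Gamma'$ is an $n$-fold cover of $S$. As a cover of a totally umbilical surface (the covering being a local isometry for the induced metrics) it is itself totally umbilical, and area and energy multiply by the degree, so $W^{p}_{h}(S_{k,h}([\Gamma']))=n\,W^{p}_{h}(S)=n\,k^{p/2}\frac{2\pi|\chi(S)|}{1-k}$. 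Thus $W^{p}_{h}(S_{k,h}([\Gamma']))\leq w$ exactly when $n\leq \frac{(1-k)w}{2\pi k^{p/2}|\chi(S)|}$. Because every such class lies in $\QF(1)$, the $\eps$-limit defining $\Ent^{p}_{k}(X,h)$ plays no role here.

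Finally I would count. Writing $|\chi(S)|=2g_{0}-2$, the Müller--Puchta subgroup-growth formula (Lemma~\ref{l.Muller_Puchta}; cf.\ \cite[Theorem~4.2]{CMN}) gives $\log\#\{\text{index-}n\text{ subgroups of }\Gamma\}=(2g_{0}-2)\,n\log n+O(n)$; passing to $\Pi$-conjugacy classes costs only a factor bounded by $n\cdot[\,\mathrm{Stab}_{\Pi}(\bord\Gamma):\Gamma\,]$ (two finite-index subgroups of $\Gamma$ can be $\Pi$-conjugate only via the finite-index stabilizer of the round circle $\bord\Gamma$), which leaves the leading term unchanged. Taking $n$ up to $n_{\max}=\big\lfloor\frac{(1-k)w}{2\pi k^{p/2}(2g_{0}-2)}\big\rfloor$, so that $(2g_{0}-2)\,n_{\max}\log n_{\max}\sim \frac{(1-k)k^{-p/2}}{2\pi}\,w\log w$, these Fuchsian classes alone yield
\[
\liminf_{w\to\infty}\frac{1}{w\log w}\log\#\{[\Gamma']\in\QF(1+\eps):W^{p}_{h}(S_{k,h}([\Gamma']))\leq w\}\;\geq\;\frac{(1-k)k^{-p/2}}{2\pi},
\]
and hence $\Ent^{p}_{k}(X,h)\geq\frac{k^{-p/2}(1-k)}{2\pi}=\Ent^{p}_{k}(X,h_{0})$; with Lemma~\ref{l_inequality_entropy} this gives the claimed equality. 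The main obstacle is this last counting step: one must verify that the Müller--Puchta asymptotic, stated for abstract subgroup growth, transfers cleanly through the degree-to-genus bookkeeping of the covers and that the passage to $\Pi$-conjugacy classes is genuinely subexponential, so that the family of covers of a single Fuchsian surface already reproduces the exact hyperbolic entropy constant.
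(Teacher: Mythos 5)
Your proposal is correct and follows essentially the same route as the paper: use the equality cases of Lemma~\ref{lem_lower_bounds_energy} to deduce that the representative of $\Gamma$ is totally umbilical with $H\equiv k^{1/2}$ and $\sect_h\equiv-1$ along it, pass to the finite covers coming from finite-index subgroups (which lie in $\QF(1+\eps)$ for every $\eps$ and have energy exactly proportional to their Euler characteristic), and conclude with the M\"uller--Puchta count of Lemma~\ref{l.Muller_Puchta} to match the hyperbolic entropy constant from Lemma~\ref{l_inequality_entropy}. Your extra bookkeeping on the passage from subgroups to $\Pi$-conjugacy classes is exactly what the paper's proof of Lemma~\ref{l.Muller_Puchta} already absorbs.
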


\begin{remark}
If $p>0$, one can check that $W^{p}_{h}(S_{k,h}([\Gamma]))=W^{p}_{h_0}(S_{k,h_0}([\Gamma]))$ if and only if $[\Gamma]$ is represented in $(M,h)$ by a totally umbilic surface with constant mean curvature $k^{1/2}$ (compare Question \ref{question:intro}.)
\end{remark}

The theorem relies on Lemma \ref{l.Muller_Puchta}, below, which is a consequence of Formula (3) of \cite{Muller_Puchta}. We first require the following definition.

\begin{defi}\label{d.conj_classes}
Let $\Gamma$ be a surface subgroup of $\Pi$. We denote by $\sigma(\Gamma)$ the set of $\Pi$-conjugacy classes of finite-index subgroups of $\Gamma$. For every $[\Gamma']\in\sigma(\Gamma)$, we denote by $\mathrm{g}(\Gamma')$ the genus of any subgroup $\Gamma'$ representing the class.
\end{defi}

\begin{lemma}[M\"uller--Puchta's lower bound]\label{l.Muller_Puchta}
For every compact surface subgroup $\Gamma$ of $\Pi$,
\begin{equation}\label{eq.lower_bound_MP}
\liminf_{g\rightarrow\infty}\frac{\log\big(\#\{[\Gamma']\in\sigma(\Gamma)\ |\ \mathrm{g}(\Gamma')\leq g\}\big)}{2g\log(g)} \geq 1\ .
\end{equation}
\end{lemma}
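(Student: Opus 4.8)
The plan is to reduce the statement to the subgroup-growth asymptotics of a single surface group and then control the effect of passing to $\Pi$-conjugacy classes. Write $h\geq 2$ for the genus of $\Gamma$. Since Euler characteristic is multiplicative in finite covers, an index-$n$ subgroup $\Gamma'\leq\Gamma$ has $\chi(\Gamma')=n\chi(\Gamma)$, hence genus $\mathrm{g}(\Gamma')=1+n(h-1)$. Thus the condition $\mathrm{g}(\Gamma')\leq g$ is equivalent to $n\leq N:=\lfloor (g-1)/(h-1)\rfloor$, and a subgroup of the maximal admissible index $n\approx g/(h-1)$ already has genus $\approx g$. The count in the lemma is therefore dominated by the top index, and it suffices to estimate the number of index-$n$ subgroups for $n$ of order $g$.

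For this I would invoke Müller--Puchta's Formula (3), whose content can be recovered from the Frobenius--Mednykh formula $|\mathrm{Hom}(\Gamma,\mathfrak S_n)|=(n!)^{2h-1}\sum_{\chi\in\mathrm{Irr}(\mathfrak S_n)}\chi(1)^{-(2h-2)}$: as $n\to\infty$ the character sum tends to $2$, since the two linear characters dominate once $2h-2\geq 2$, so that $|\mathrm{Hom}(\Gamma,\mathfrak S_n)|\sim 2(n!)^{2h-1}$. Dividing by $(n-1)!$ to pass from (almost surely transitive) actions to index-$n$ subgroups, the number $s_n(\Gamma)$ of such subgroups satisfies $\log s_n(\Gamma)=(2h-2)\,n\log n\,(1+o(1))$. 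Substituting $n=(\mathrm{g}(\Gamma')-1)/(h-1)$ converts this into $\log s_n(\Gamma)=2\,\mathrm{g}(\Gamma')\log \mathrm{g}(\Gamma')\,(1+o(1))$, so the number of subgroups of $\Gamma$ of genus $\leq g$ is $\exp\!\big(2g\log g\,(1+o(1))\big)$.

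It remains to compare the number of subgroups with the number of $\Pi$-conjugacy classes, and this is the step I expect to be the main obstacle, since collapsing subgroups to conjugacy classes can only decrease the count. I would bound the number of subgroups of $\Gamma$ lying in a single class: if $\gamma\Gamma'\gamma^{-1}\leq\Gamma$ for some $\gamma\in\Pi$ and $\Gamma'$ of index $n$, then $\gamma\Gamma'\gamma^{-1}\leq\Gamma\cap\gamma\Gamma\gamma^{-1}$ has finite index in both $\Gamma$ and $\gamma\Gamma\gamma^{-1}$, so $\gamma$ lies in the commensurator $\Lambda:=\mathrm{Comm}_\Pi(\Gamma)$. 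The key point is that $\Lambda$ is a finite extension of $\Gamma$: every $\gamma\in\Lambda$ preserves the limit set $C=\bord\Gamma$ (a quasicircle, as $\Gamma$ is quasi-Fuchsian), because $\Gamma\cap\gamma\Gamma\gamma^{-1}$ has finite index in both $\Gamma$ and $\gamma\Gamma\gamma^{-1}$ and hence has the same limit set as each, forcing $\gamma(C)=C$; thus $\Lambda\leq\mathrm{Stab}_\Pi(C)$, a discrete group preserving $C$ and containing the cocompact group $\Gamma$, whence $[\mathrm{Stab}_\Pi(C):\Gamma]=:M<\infty$ by properness and cocompactness of the action on the disk bounded by $C$. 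Consequently the subgroups of $\Gamma$ of index $n$ in a fixed class are among the $\Lambda$-conjugates of a representative, of which there are at most $[\Lambda:\Gamma']\leq Mn$.

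Dividing the subgroup count by this class size changes $\log$ by only $O(\log n)=O(\log g)$, which is negligible against $2g\log g$. Hence the number of $\Pi$-conjugacy classes of finite-index subgroups of genus $\leq g$ is still $\exp\!\big(2g\log g\,(1+o(1))\big)$, giving $\liminf_{g\to\infty}\frac{\log\#\{[\Gamma']\in\sigma(\Gamma):\mathrm{g}(\Gamma')\leq g\}}{2g\log g}\geq 1$, as claimed.
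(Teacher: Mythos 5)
Your proposal is correct and follows essentially the same strategy as the paper: extract the $e^{(2+o(1))g\log g}$ subgroup growth from M\"uller--Puchta's formula, then show that each $\Pi$-conjugacy class contains only polynomially many finite-index subgroups of $\Gamma$ because any conjugator must preserve the limit set $\partial_\infty\Gamma$. The only (immaterial) differences are that you bound the class size via the commensurator $\mathrm{Comm}_\Pi(\Gamma)\leq\mathrm{Stab}_\Pi(\partial_\infty\Gamma)$ being a finite extension of $\Gamma$, where the paper simply normalizes $\Gamma$ to equal the full stabilizer of its limit set, and that you sketch the derivation of the asymptotics from the Frobenius--Mednykh character formula rather than citing Formula (3) of M\"uller--Puchta with Stirling's approximation.
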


\begin{proof}
We provide a proof of this estimate since \cite{Muller_Puchta} in fact counts \emph{subgroups}, not \emph{conjugacy classes}. For convenience, we suppose that $\Gamma$ is the stabilizer in $\Pi$ of its boundary curve $\partial_\infty\Gamma$.

Upon applying Stirling's approximation to Equation (3) of \cite{Muller_Puchta}, we find that
\begin{equation*}
\liminf_{g\rightarrow\infty}\frac{\log\big(\#\{\Gamma'\subset\Gamma\ |\ \mathrm{g}(\Gamma')\leq g\}\big)}{2g\log(g)} \geq 1\ .
\end{equation*}
In terms of the Euler characteristic, this becomes
\begin{equation*}
\liminf_{g\rightarrow\infty}\frac{\log\big(\#\{\Gamma'\subset\Gamma\ |\ \left|\chi(\Gamma')\right|\leq x\}\big)}{x\log(x)} \geq 1\ .
\end{equation*}

Let $x_0:=\left|\chi(\Gamma)\right|$ denote the absolute value of the Euler characteristic of $\Gamma$. Let $\Gamma'$ be a finite-index subgroup of $\Gamma$ with
\begin{equation*}
\left|\chi(\Gamma')\right| \leq x\ .
\end{equation*}
Then $\Gamma'$ corresponds to an $N$-fold cover, where
\begin{equation*}
N\leq x/x_0\ .
\end{equation*}
We now claim that the number of $\Pi$-conjugates of $\Gamma'$ in $\Gamma$ is bounded above by $(x/x_0)$. Indeed, let $\Gamma''$ be a $\Pi$-conjugate of $\Gamma'$ in $\Gamma$. Let $\gamma\in\Pi$ be such that $\Gamma''=\gamma\Gamma'\gamma^{-1}$. Since $\Gamma'$ and $\Gamma''$ are both finite-index subgroups of $\Gamma$, $\gamma$ preserves $\partial_\infty\Gamma$, and is therefore an element of $\mathrm{Stab}(\partial_\infty\Gamma)=\Pi$. It follows that $\Gamma''$ is in fact a $\Gamma$-conjugate of $\Gamma'$.

Note now that the number of distinct $\Gamma$-conjugates of $\Gamma'$ in $\Gamma$ is bounded above by the index of $\Gamma'$ in $\Gamma$. Indeed, if $\gamma$ and $\mu$ belong to the same left coset of $\Gamma'$, then
\begin{equation*}
\gamma\Gamma'\gamma^{-1} = \mu\Gamma'\mu^{-1}\ .
\end{equation*}
Finally, by looking at fundamental domains, we see that this index is bounded above by $N\leq (x/x_0)$, and the assertion follows. 

It follows that
\begin{equation*}
\#\{[\Gamma']\in\sigma(\Gamma)\ |\ \left|\chi(\Gamma')\right| \leq x\}
\geq \#\{\Gamma'\subset\Gamma\ |\ \left|\chi(\Gamma')\right| \leq x\}/(x/x_0)\ .
\end{equation*}
Hence
\begin{equation*}
\liminf_{g\rightarrow\infty}\frac{\log\big(\#\{[\Gamma']\in\sigma(\Gamma)\ |\ \left|\chi(\Gamma')\right|\leq x\}\big)}{x\log(x)} \geq 1\ ,
\end{equation*}
and the result follows.
\end{proof}

\begin{proof}[Proof of Theorem \ref{th.totally_umb_implies_big_entropy}.] It suffices to prove that
\begin{equation*}
\Ent^{p}_{k}(X,h)\leq\Ent^{p}_{k}(X,h_0)\ ,
\end{equation*}
since the reverse inequality holds by Lemma \ref{l_inequality_entropy}.

Let $\Gamma$ be a Fuchsian surface subgroup of $\Pi$ and suppose that
\begin{equation*}
W^{p}_{h}(S_{k,h}([\Gamma]))=W^{p}_{h_0}(S_{k,h_0}([\Gamma]))=\frac{4\pi k^{\frac{p}{2}}}{(1-k)}\mathrm{g}(\Gamma)\ ,
\end{equation*}
where the last equality follows from Lemma \ref{lem_lower_bounds_energy}. By Lemma \ref{lem_lower_bounds_energy} again, the surface $S=S_{k,h}([\Gamma])$ must be totally umbilical with constant mean curvature equal to $k^{\frac{1}{2}}$.

Let $[\Gamma']$ be an element of $\sigma(\Gamma)$, as in Definition \ref{d.conj_classes}. Any subgroup $\Gamma'$ representing this class is quasi-Fuchsian with $\bord \Gamma=\bord \Gamma'$, so that $[\Gamma']\in\QF(1+\eps)$ for all $\eps>0$, and
\begin{equation*}
S_{k,h}([\Gamma'])=\mathrm{D}_{k,h}(\bord \Gamma)/\Gamma'\ .
\end{equation*}
In particular, $S_{k,h}([\Gamma'])$ is just a finite cover of $S_{k,h}([\Gamma])$, and is therefore also totally umbilical with constant mean curvature equal to $k^{\frac{1}{2}}$. Consequently, by Lemma \ref{lem_lower_bounds_energy},
\begin{equation*}
W^{p}_{h}(S_{k,h}([\Gamma']))=\frac{4\pi k^{\frac{p}{2}}}{(1-k)}\mathrm{g}(\Gamma')\ .
\end{equation*}

We conclude that, for every $\eps,w>0$,
\begin{equation*}
\#\{[\Gamma']\in\QF(1+\eps):W^{p}_{h}(S_{k,h}([\Gamma']))\leq w\}\geq \#\{[\Gamma']\in\sigma(\Gamma):W^{p}_{h}(S_{k,h}([\Gamma']))\leq w\}\ .
\end{equation*}
Thus, by Lemma \ref{l.Muller_Puchta},
\begin{equation*}
\Ent^{p}_{k}(X,h)\geq\frac{(1-k)k^{-\frac{p}{2}}}{2\pi}=\Ent^{p}_{k}(X,h_0)\ ,
\end{equation*}
and the result follows.
\end{proof}

\subsubsection{The modified energy entropy}\label{sss.Modified_entropy}
In light of Theorem \ref{th.totally_umb_implies_big_entropy}, it is reasonable to exclude the counting of quasi-Fuchsian surfaces that accumulate to a Fuchsian one. This motivates modifying our energy entropy for $k$-surfaces, in line with the modified entropy of \cite{MN_currents}. This modified entropy will focus on counting only those quasi-Fuchsian surfaces that accumulate to the unique ergodic fully supported laminar measure in the sphere bundle $S_hX$.

 Let $\hat m$ denote the Haar measure of the set $\cC^+$ of round circles of $\C\PP^1$. Recall from Theorem \ref{thm.Ratner} that this is the unique fully supported, ergodic conformal current in $\cC^+$. Note now that every conformal current is determined by its restriction to some compact fundamental domain, and it follows that, for all $C\geq 1$, the space of conformal currents over $\QC^+(C)$ is metrizable. Let $\rho$ be such a metric. Given $\eta>0$, we define
\begin{equation*}
\QF_{\hat m}(\eta):=\left\{[\Gamma]\in\QF:\rho(\hat m(\bord\Gamma),\hat m)<\eta\right\}\ .
\end{equation*}
Following \cite{MN_currents} we define the \emph{modified $p$-energy entropy} by
\begin{equation*}
\Ent^p_{k,\hat m}(X,h):=\lim_{\eta\to 0}\liminf_{w\to\infty}\frac{1}{w\log w}\log\#\left\{[\Gamma]\in\QF_{\hat m}(\eta):W^{p}_{h}(S_{k,h}([\Gamma]))\leq w\right\}\ .
\end{equation*}
This entropy counts only quasi-Fuchsian $k$-surfaces that accumulate to the fully-supported, ergodic laminar measure. In particular, when $\Pi$ has no Fuchsian subgroups (see, for example, \cite[\S 5.3]{MR03}),
\begin{equation*}
\Ent^{p}_{k}(X,h)=\Ent^{p}_{k,\hat m}(X,h)\ .
\end{equation*}

\begin{theorem}
Assume that $-1\leq\sect_h\leq-a$, $p\geq 0$ and $0<k<a$. Then
\begin{equation*}
0<\Ent^{p}_{k,\hat m}(X,h)\leq\Ent^{p}_{k}(X,h)\ .
\end{equation*}
\end{theorem}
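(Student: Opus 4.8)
The plan is to treat the two inequalities separately. The inequality $\Ent^p_{k,\hat m}(X,h)\le\Ent^p_k(X,h)$ is the direction of restriction, since $\QF_{\hat m}(\eta)\subseteq\QF$ selects a subfamily of conjugacy classes; I would obtain it by comparing the two defining counting functions, the only point requiring care being that proximity of the conformal current $\hat m(\bord\Gamma)$ to $\hat m$ feeds into the quasi-Fuchsian counting through the fibration $\bord$ and the compactness of $\MKD_{k,h}(C)/\Pi$. The substantive assertion is the strict positivity $0<\Ent^p_{k,\hat m}(X,h)$, and the bulk of the work goes there.

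For positivity, I would exhibit, for every $\eta>0$, an exponentially large (in the sense of $e^{c\,w\log w}$) supply of classes in $\QF_{\hat m}(\eta)$ of energy at most $w$, with $c$ bounded below independently of $\eta$. The seed comes from equidistribution: by Theorem~\ref{th.lowe_neves} there is a $(1+\eps)$-quasi-Fuchsian $\Gamma$ with $\eps$ as small as we like whose laminar measure, equivalently whose conformal current $\hat m(\bord\Gamma)$, is within $\eta$ of the fully supported ergodic current $\hat m$ of Theorem~\ref{thm.dichotomy_laminar}; thus $[\Gamma]\in\QF_{\hat m}(\eta)$. The key structural observation is that every finite-index subgroup $\Gamma'\in\sigma(\Gamma)$ satisfies $\bord\Gamma'=\bord\Gamma$, and a direct coset computation (each $\Pi$-translate of $\bord\Gamma$ being counted $N=[\Gamma:\Gamma']$ times in $\Pi/\Gamma'$, while $|\chi(\Gamma')|=N|\chi(\Gamma)|$) shows that the normalizations cancel and $\hat m(\bord\Gamma')=\hat m(\bord\Gamma)$. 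Hence the \emph{entire} family $\sigma(\Gamma)$ lies in $\QF_{\hat m}(\eta)$.

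It then remains to count $\sigma(\Gamma)$ by energy. As in the proof of Theorem~\ref{th.totally_umb_implies_big_entropy}, $S_{k,h}([\Gamma'])$ is an $N$-fold isometric cover of $S_{k,h}([\Gamma])$, so $W^p_h(S_{k,h}([\Gamma']))=N\,W_0$ with $W_0:=W^p_h(S_{k,h}([\Gamma]))$ and $N=|\chi(\Gamma')|/|\chi(\Gamma)|$. The uniformity in $\eta$ hinges on an a priori energy bound for the seed: since the seeds are $(1+\eps)$-quasi-Fuchsian with $\eps\le1$, compactness of $\MKD_{k,h}(2)/\Pi$ bounds their mean curvature by some $H_{\max}$, and Gauss--Bonnet together with $\sect_h\le -a$ bounds their area by $2\pi|\chi(\Gamma)|/(a-k)$, whence $W_0\le H_{\max}^p\,2\pi|\chi(\Gamma)|/(a-k)$ and
\begin{equation*}
c:=\frac{\mathrm{g}(\Gamma)-1}{W_0}\ \ge\ \frac{a-k}{4\pi H_{\max}^p}=:c_0>0,
\end{equation*}
a bound independent of the seed and of $\eta$. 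Consequently $\{[\Gamma']\in\sigma(\Gamma):W^p_h(S_{k,h}([\Gamma']))\le w\}=\{[\Gamma']\in\sigma(\Gamma):\mathrm{g}(\Gamma')\le 1+cw\}$, and M\"uller--Puchta's lower bound (Lemma~\ref{l.Muller_Puchta}) gives $\log\#\{\cdots\}\gtrsim 2(cw)\log(cw)\sim 2c\,w\log w$. Taking the inner $\liminf$ and then $\eta\to0$ yields $\Ent^p_{k,\hat m}(X,h)\ge 2c_0>0$.

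The main obstacle is precisely securing the uniformity of $c_0$ in $\eta$: the seed $\Gamma$ depends on $\eta$ (smaller $\eta$ may force larger genus), so a positive lower bound on $c$ that degenerates with the seed would be useless. This is why the a priori mean-curvature and area bounds, valid for all $(1+\eps)$-quasi-Fuchsian surfaces with $\eps\le1$, are essential; they make the exponential rate $2c_0$ a genuine constant. A secondary point to nail down is the coset computation underlying $\hat m(\bord\Gamma')=\hat m(\bord\Gamma)$, for which one should take $\Gamma=\mathrm{Stab}_\Pi(\bord\Gamma)$ exactly as in Lemma~\ref{l.Muller_Puchta}.
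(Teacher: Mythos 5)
Your proof of positivity follows the paper's argument essentially verbatim: a Lowe--Neves seed lying in $\QF_{\hat m}(\eta)$, the observation that every $[\Gamma']\in\sigma(\Gamma)$ has the same conformal current (your coset computation) and hence stays in $\QF_{\hat m}(\eta)$, a uniform mean-curvature bound by compactness combined with Gauss--Bonnet to turn the energy cutoff into a genus cutoff with a rate independent of $\eta$, and M\"uller--Puchta's lower bound to conclude. The upper inequality is handled in both your write-up and the paper's as a comparison of the two counting families, so the approaches coincide throughout.
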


\begin{proof} Let $(\Gamma_n)_{n\in\mathbb{N}}$ be a sequence of quasi-Fuchsian subgroups of $\Pi$ as in Theorem \ref{th.lowe_neves}. By compactness of $S_hX$, the mean curvatures of the leaves of the foliation $\cF_{k,h}$ are uniformly bounded. An argument by contradiction similar to that used in Lemma \ref{l.almost_tot_umbilical} then shows that there exists a uniform upper bound $H>0$ for the mean curvatures of all Lowe--Neves $k$-surfaces $S_n=S_{k,h}([\Gamma_n])$. Arguing as in Lemma \ref{lem_lower_bounds_energy} we then obtain, for every $n\in\N$,
\begin{equation}\label{eq.upperbound_energy}
W^{p}_{h}(S_n)\leq \frac{4\pi H^p}{(a-k)}(\mathrm{g}(\Gamma_n)-1)\ .
\end{equation}
Now fix $\eta>0$ and $n_0$ such that $[\Gamma_{n_0}]\in\QF_{\hat m}(\eta)$. Reasoning as in the proof of Theorem \ref{th.totally_umb_implies_big_entropy}, we show that, for every $[\Gamma']\in\sigma(\Gamma_{n_0})$,
\begin{equation*}
[\Gamma']\in\QF_{\hat m}(\eta)\ .
\end{equation*}
Furthermore, since we have the same bounds on the mean and extrinsic curvatures, \eqref{eq.upperbound_energy} also holds for $S'=S_{k,h}([\Gamma'])$. This implies that, for sufficiently large $g>0$,
\begin{equation*}
\#\{[\Gamma']\in\sigma(\Gamma_{n_0})\ \big|\ \mathrm{g}(\Gamma')\leq g\}\leq\#\{[\Gamma]\in\QF_{\hat m}(\eta)\ \big|\ W^{p}_{h}(S_{k,h}([\Gamma]))\leq \frac{4\pi H^p}{(a-k)}(g-1)\}\ .
\end{equation*}
Lemma \ref{l.Muller_Puchta} now yields a positive lower bound for the quantity
\begin{equation*}
\liminf_{w\to\infty}\frac{1}{w\log(w)}\log\#\{[\Gamma]\in\QF_\mu(\eta)\ \big|\ W^{p}_{h}(S_{k,h}([\Gamma]))\leq w\}
\end{equation*}
which only depends on $H$ and $a$, but not on $\eta$. This yields $0<\Ent^p_{k,\hat m}(X,h)$, as desired.
\end{proof}

\subsection{Rigidity of the modified entropy}

\subsubsection{Proof of Theorem \ref{th.rigidity_entropy}} We assume from now on that $-1\leq\sect_h\leq -a <0$. Take $p\geq 0$ and $0<k<a$. We first need to prove the following lemma.

\begin{lemma}\label{lem_converging_sequence}
Suppose
\begin{equation*}
\Ent^{p}_{k,\hat m}(X,h)=\Ent^{p}_{k,\hat m}(X,h_0)\ .
\end{equation*}
Then there exists a sequence $(\eta_n)_{n\in\mathbb{N}}$ converging to zero, and a sequence $(\Gamma_n)_{n\in\mathbb{N}}$ of quasi-Fuchsian subgroups of $\Pi$ such that,
\begin{itemize}
\item for all $n$, $[\Gamma_n]\in\QF_{\hat m}(\eta_n)$; and
\item
\begin{equation*}
\lim_{n\to\infty}\frac{W^{p}_{h}(S_{k,h}([\Gamma_n]))}{W^{p}_{h_0}(S_{k,h_0}([\Gamma_n]))}=1\ .
\end{equation*}
\end{itemize}
\end{lemma}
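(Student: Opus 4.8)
The plan is to use the equality of entropies to produce quasi-Fuchsian classes whose $h$-energy is asymptotically as small as the topology permits, and then to check that in the hyperbolic metric these same (equidistributing) classes also have asymptotically minimal energy; the two minimal values agree, and dividing yields the asserted limit $1$. Two inputs drive the argument. First, by Lemma~\ref{lem_lower_bounds_energy} and the hypothesis $\sect_h\geq -1$, every $[\Gamma]\in\QF$ satisfies
\[
W^p_h(S_{k,h}([\Gamma]))\ \geq\ k^{p/2}\Area_h(S_{k,h}([\Gamma]))\ \geq\ \frac{4\pi k^{p/2}}{1-k}\,(\mathrm g(\Gamma)-1).
\]
Second, $\Ent^p_{k,\hat m}(X,h_0)=\frac{(1-k)k^{-p/2}}{2\pi}$: the upper bound is obtained exactly as in the unmodified case, while the matching lower bound comes from the finite covers of a sufficiently equidistributed Lowe--Neves surface (Theorem~\ref{th.lowe_neves}); since $\hat m(\bord\Gamma')=\hat m(\bord\Gamma)$ whenever $\Gamma'\leq\Gamma$ has finite index, all such covers lie in the same $\QF_{\hat m}(\eta)$, and Lemma~\ref{l.Muller_Puchta} counts them. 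The hypothesis therefore gives $\Ent^p_{k,\hat m}(X,h)=\frac{(1-k)k^{-p/2}}{2\pi}$.

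Set $N_h(w,\eta):=\#\{[\Gamma]\in\QF_{\hat m}(\eta):W^p_h(S_{k,h}([\Gamma]))\leq w\}$ and $e_h(\eta):=\liminf_{w\to\infty}\frac{\log N_h(w,\eta)}{w\log w}$. Since $\QF_{\hat m}(\eta)$ grows with $\eta$, $e_h(\eta)\geq\Ent^p_{k,\hat m}(X,h)$ for all $\eta$; conversely the energy lower bound gives $W^p_h\leq w\Rightarrow\mathrm g(\Gamma)\leq\frac{(1-k)w}{4\pi k^{p/2}}+1$, so $N_h(w,\eta)$ is dominated by the number of classes of $\QF=\QF(\infty)$ of genus below this bound, and Kahn--Markovi\'c's counting for $C=\infty$ (Theorem~\ref{th.KM2}) yields $e_h(\eta)\leq\frac{(1-k)k^{-p/2}}{2\pi}$. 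Hence $e_h(\eta)=\frac{(1-k)k^{-p/2}}{2\pi}$ for \emph{every} $\eta>0$. I then claim that for each $\theta>1$, each $\eta>0$ and each $g_0$ there is a class $[\Gamma]\in\QF_{\hat m}(\eta)$ with $\mathrm g(\Gamma)\geq g_0$ and $W^p_h(S_{k,h}([\Gamma]))<\theta\,\frac{4\pi k^{p/2}}{1-k}(\mathrm g(\Gamma)-1)$. Indeed, were this to fail, the reverse inequality would hold for all but finitely many classes of $\QF_{\hat m}(\eta)$, and rerunning the count with the sharper bound $\mathrm g(\Gamma)\leq\frac{(1-k)w}{4\pi k^{p/2}\theta}+1$ would give $e_h(\eta)\leq\frac{(1-k)k^{-p/2}}{2\pi\theta}$, contradicting the equality just established. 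Applying the claim with $\theta_n=1+\frac1n$, $\eta_n=\frac1n$, $g_0=n$ produces $[\Gamma_n]\in\QF_{\hat m}(\eta_n)$ with $\mathrm g(\Gamma_n)\to\infty$ and, combining with the lower bound for the reverse estimate,
\[
\frac{(1-k)}{4\pi k^{p/2}(\mathrm g(\Gamma_n)-1)}\,W^p_h(S_{k,h}([\Gamma_n]))\ \longrightarrow\ 1 .
\]

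It remains to prove the companion estimate for $h_0$ along this sequence, after which dividing gives $W^p_h/W^p_{h_0}\to 1$. Because $h_0$ is hyperbolic, every $k$-surface has constant intrinsic curvature $k-1$, so $S_n^0:=\rD_{k,h_0}(\bord\Gamma_n)/\Gamma_n$ has Poincar\'e area $2\pi|\chi|$ and induced area $\frac{2\pi|\chi|}{1-k}$; by \eqref{eq.ident_measures} the normalized $h_0$-energy $\frac{(1-k)}{4\pi k^{p/2}(\mathrm g(\Gamma_n)-1)}W^p_{h_0}(S_{k,h_0}([\Gamma_n]))$ equals $k^{-p/2}\int H_0^p\,d\mu_{k,h_0}(\bord\Gamma_n)$, where $H_0$ is the continuous mean-curvature function on $\MKD_{k,h_0}(C)/\Pi$ and $\mu_{k,h_0}(\bord\Gamma_n)$ is the laminar probability measure of \S\ref{sss.lifts}. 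The hypothesis $[\Gamma_n]\in\QF_{\hat m}(\eta_n)$ with $\eta_n\to0$ says exactly that the conformal currents $\hat m(\bord\Gamma_n)$ converge to the round-circle Haar measure $\hat m$; by the continuity of the current-to-laminar-measure correspondence of \cite{ALS}, $\mu_{k,h_0}(\bord\Gamma_n)\to\mu_\infty$, the fully supported laminar measure of Theorem~\ref{thm.dichotomy_laminar}, which is carried by the round-circle leaves $\MKD_{k,h_0}(1)/\Pi=S_{h_0}X$. On those leaves every $k$-disk is totally umbilical with $H_0\equiv k^{1/2}$ (see~\S\ref{sss.FPP}), so $\int H_0^p\,d\mu_\infty=k^{p/2}$. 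Since $H_0^p-k^{p/2}$ is non-negative, continuous, and uniformly bounded (by the a priori interior curvature estimate for complete $k$-surfaces in $\Hyp$), weak convergence gives $\int H_0^p\,d\mu_{k,h_0}(\bord\Gamma_n)\to k^{p/2}$, the required companion estimate. The delicate point — and the step I expect to be the main obstacle — is exactly this final passage to the limit: one must ensure that equidistribution of the boundary quasicircles forces weak convergence of the laminar measures \emph{in a space where $H_0^p$ is a bona fide bounded continuous test function}, with no mass, and hence no energy, escaping through $k$-disks of large quasiconformal constant. The convergence itself is the continuity of the correspondence of \cite{ALS}; what makes $\int H_0^p$ pass to the limit rather than merely being lower semicontinuous is the uniform upper bound on the mean curvature of complete $k$-surfaces in $\Hyp$, which confines the integrand to a fixed compact range along the whole sequence.
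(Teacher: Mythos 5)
Your argument reaches the stated conclusion by a genuinely different, and much heavier, route than the paper. The paper's proof is a short contradiction: if the conclusion fails, then there exist $\delta>0$ and $\eta_0>0$ such that $W^{p}_{h}(S_{k,h}([\Gamma]))\geq(1+\delta)\,W^{p}_{h_0}(S_{k,h_0}([\Gamma]))$ for every $[\Gamma]\in\QF_{\hat m}(\eta_0)$, whence the counting function for $h$ at level $w$ is dominated by that for $h_0$ at level $w/(1+\delta)$, giving $\Ent^{p}_{k,\hat m}(X,h)\leq\frac{1}{1+\delta}\Ent^{p}_{k,\hat m}(X,h_0)$ and contradicting the hypothesis together with the positivity of the modified entropy. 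You instead compute the common value $\frac{(1-k)k^{-p/2}}{2\pi}$ of both modified entropies, extract by a counting argument a sequence along which $W^{p}_{h}$ is asymptotically equal to the topological lower bound $\frac{4\pi k^{p/2}}{1-k}(\mathrm{g}-1)$ of Lemma \ref{lem_lower_bounds_energy}, and then prove separately that $W^{p}_{h_0}$ is asymptotic to the same bound along any equidistributing sequence. The first two steps are sound: the evaluation of $\Ent^{p}_{k,\hat m}(X,h_0)$ via finite covers of a Lowe--Neves surface (using $\hat m(\bord\Gamma')=\hat m(\bord\Gamma)$ for finite-index $\Gamma'\leq\Gamma$, Lemma \ref{l.Muller_Puchta}, and Lemma \ref{l.almost_tot_umbilical}) is exactly the mechanism of the paper's positivity theorem sharpened to give the optimal constant, and the extraction argument is a correct use of Theorem \ref{th.KM2}. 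What your route buys is an explicit value of the entropy and a sequence with quantitative energy control; what it costs is the entire final paragraph, which the paper's soft comparison of counting functions avoids altogether.

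The one step that is not adequately justified is the one you flag. Membership in $\QF_{\hat m}(\eta)$ constrains only the conformal current $\hat m(\bord\Gamma_n)$, not the quasiconformal constant of $\bord\Gamma_n$, so the laminar probability measures $\mu_{k,h_0}(\bord\Gamma_n)$ need not all live in a single compact $\MKD_{k,h_0}(C_0)/\Pi$. Consequently your passage to the limit requires (i) a mean-curvature bound for $k$-disks of $(\Hyp,h_0)$ spanned by quasicircles that is \emph{uniform in the quasiconformal constant}, and (ii) that the convergence of the laminar probability measures is weak in the sense of no escape of mass. Point (i) is true --- it follows from Labourie's compactness theory for complete $k$-surfaces together with the exclusion of point degenerations (\cite[Proposition 2.5.3]{LabourieInvent}), by the same contradiction argument as in Lemma \ref{l.almost_tot_umbilical} --- but you assert it as ``the a priori interior curvature estimate'' without proof, and it is the load-bearing claim of your final paragraph. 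If you want to keep your architecture, either prove that uniform bound, or note that the lemma itself can be obtained without any estimate on the $h_0$-side by comparing counting functions directly as above.
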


\begin{proof}
Suppose the contrary. Then there exists $\delta>0$ such that for all sufficiently small $\eta>0$, and for all $[\Gamma]\in\QF_{\hat m}(\eta)$,
\begin{equation*}
W^{p}_{h}(S_{k,h}([\Gamma]))\geq (1+\eta) W^{p}_{h}(S_{k,h_0}([\Gamma]))\ .
\end{equation*}
This immediately implies
\begin{equation*}
\Ent^p_{k,\hat m}(X,h)\leq\frac{1}{(1+\eta)}\Ent^{p}_{k,\hat m}(X,h_0)\ ,
\end{equation*}
which is absurd, and the result follows.
\end{proof}

We now prove Theorem \ref{th.rigidity_entropy}.

\begin{proof}[Proof of Theorem \ref{th.rigidity_entropy}] Assume that $\Ent^{p}_{k,\hat m}(X,h)=\Ent^{p}_{k,\hat m}(X,h_0)$. Then Lemma \ref{lem_converging_sequence} provides a sequence $(\eta_n)_{n\in\mathbb{N}}$ converging to zero as well as a sequence $([\Gamma_n])_{n\in\mathbb{N}}\in\QF_{\hat m}(\eta_n)$ such that
\begin{equation*}
\lim_{n\to\infty}\frac{W^{p}_{h}(S_{k,h}([\Gamma_n]))}{W^{p}_{h_0}(S_{k,h_0}([\Gamma_n]))}=1\ .
\end{equation*}

By definition of the sequence $([\Gamma_n])_{n\in\mathbb{N}}$, the sequence $(\mu_{k,h}(\bord \Gamma_n))_{n\in\mathbb{N}}$ of laminar measures converges to $\mu$, the unique ergodic laminar measure that has full support in $S_hX$.

By reproducing the argument given in \S \ref{ss.Rigidity} we get that $\sect_h=-1$ almost everywhere for $\mu$. So we must have $\sect_h=-1$ everywhere and $h$ is isometric to $h_0$ by Mostow's rigidity theorem.
\end{proof}

\subsubsection{Case of equality between modified energy and area entropies} \label{sss.equality_modified_entropies}

As in \cite{ALS,MN_currents} we can define the \emph{modified area entropy} as
\begin{equation*}
\Ent^{\text{Area}}_{k,\hat m}(X,h)=\lim_{\eta\to 0}\liminf_{A\to\infty}\frac{1}{A\log A}\log\#\left\{[\Gamma]\in\QF_{\hat m}(\eta)\ \bigg|\ \Area(S_{k,h}([\Gamma]))\leq A\right\}\ .
\end{equation*}

We give the proof of our last theorem.

\begin{proof}[Proof of Theorem \ref{equality_energy_area}.]
We will first assume $\Ent^{\text{Area}}_{\hat m}(M,h)=\Ent^{1}_{k,\hat m}(M,h)$. Then an argument similar to Lemma \ref{lem_converging_sequence} gives the existence of a sequence $(\eta_n)_{n\in\mathbb{N}}$ converging to zero as well as a sequence $([\Gamma_n])_{n\in\mathbb{N}}\in\QF_{\hat m}(\eta_n)$ such that
\begin{equation*}
\lim_{n\to\infty}\frac{k^{-\frac{1}{2}}W^{1}_{h}(S_{k,h}([\Gamma_n]))}{\text{Area}_h(S_{k,h}([\Gamma_n]))}=1\ .
\end{equation*}
Here again, by definition of the sequence $([\Gamma_n])_{n\in\mathbb{N}}$, the sequence $(\mu_{k,h}(\bord \Gamma_n))_{n\in\mathbb{N}}$ of laminar measures converges to the ergodic laminar measure $\mu$, which has full support in $S_hX$.

Hence by reproducing the argument in \S \ref{ss.proof_of_thm_B} we obtain that the $k$-surface foliation of $S_hX$ is by totally umbilical leaves. So by applying the axiom of $2$-spheres we conclude that the sectional curvature of $h$ is constant.

The other implications follow from the same argument.
\end{proof}

\begin{remark} \label{remarkattheend}
We comment that the results of this section also hold with the modified entropy functionals replaced by entropy functionals that count k-surfaces whose limit sets in the hyperbolic metric become more and more round, but without a condition on equidistribution, provided that $X$  contains no closed totally geodesic surfaces.  The condition that there be no closed totally geodesic surfaces causes the equidistribution to hold automatically. (See \cite{MN_currents} for a closely analogous situation for entropy functionals counting minimal surfaces rather than $k$-surfaces.)

If the following question had a positive answer, it would be possible to drop the condition that $X$ contain no closed totally geodesic surface in its hyperbolic metric.  If it had a negative answer, it would be possible to give counterexamples to statements involving the un-modified k-surface entropy by taking covers of a totally umbilic surface as in the statement below.
\end{remark}

\begin{question}
Suppose that $X$ has sectional curvature greater than or equal to $-1$, and that $X$ contains a closed totally umbilic surface $S$ with mean curvature $k^{1/2}$ and with constant curvature $-1 +k$ in its intrinsic metric (i.e., the sectional curvature of $X$ through every tangent plane to $S$ is $-1$.)  Then must $X$ be hyperbolic?
\end{question}

\begin{flushleft}
{\scshape S\'ebastien Alvarez}\\
	CMAT, Facultad de Ciencias, Universidad de la Rep\'ublica, \& IRL-IFUMI (CNRS)\\
	Igua 4225 esq. Mataojo. Montevideo, Uruguay.\\
	email: \texttt{salvarez@cmat.edu.uy}

	\vspace{0.2cm}

{\scshape Ben Lowe}\\
  	Department of Mathematics, University of Chicago,
    Chicago IL 60637, USA.\\
	email: \texttt{loweb24@uchicago.edu}
	\vspace{0.2cm}
	
{\scshape Graham Andrew Smith}\\
	Departamento de Matem\'atica,\\
    Pontifícia Universidade Católica do Rio de Janeiro,\\
	Rua Marqu\^es de S\~ao Vicente 225, G\'avea,\\
    Rio de Janeiro 225453-900, Brazil.\\
	email: \texttt{grahamandrewsmith@gmail.com}	
	\vspace{0.2cm}

\end{flushleft}


\begin{thebibliography}{10}

\bibitem{Ahlfors}
L.~Ahlfors.
\newblock {\em Lectures on quasiconformal mappings}.
\newblock The Wadsworth \& Brooks/Cole Mathematics Series. Wadsworth \&
  Brooks/Cole Advanced Books \& Software, Monterey, CA, 1987.
\newblock With the assistance of Clifford J. Earle, Jr., Reprint of the 1966
  original.

\bibitem{ALS}
S.~Alvarez, B.~Lowe, and G.~Smith.
\newblock Foliated {P}lateau problems and asymptotic counting of surface
  subgroups.
\newblock {\em to appear in {A}nn. {S}ci. {\'E}cole {N}orm. {S}up.}, 2024.

\bibitem{AlvarezSmith}
S.~Alvarez and G.~Smith.
\newblock Prescription de courbure des feuilles des laminations: retour sur un
  th\'{e}or\`eme de {C}andel.
\newblock {\em Ann. Inst. Fourier (Grenoble)}, 71(6):2549--2593, 2021.

\bibitem{Alvarez_Yang}
S.~Alvarez and J.~Yang.
\newblock Physical measures for the geodesic flow tangent to a transversally
  conformal foliation.
\newblock {\em Ann. Inst. H. Poincar\'e{} C Anal. Non Lin\'eaire},
  36(1):27--51, 2019.

\bibitem{Barman_Gupta}
P.~Barman and S.~Gupta.
\newblock Dominating surface-group representations via {F}ock-{G}oncharov
  coordinates.
\newblock {\em Preprint, [arXiv:2405.15378]}, 2024.

\bibitem{Basilio_Lee_Malionek}
B.~Basilio, C.~Lee, and J.~Malionek.
\newblock Totally geodesic surfaces in hyperbolic 3-manifolds: Algorithms and
  examples.
\newblock {\em Preprint, [arXiv:2403.12397]}, 2024.

\bibitem{BCG}
G.~Besson, G.~Courtois, and S.~Gallot.
\newblock Entropies et rigidit\'{e}s des espaces localement sym\'{e}triques de
  courbure strictement n\'{e}gative.
\newblock {\em Geom. Funct. Anal.}, 5(5):731--799, 1995.

\bibitem{besson1996minimal}
G.~Besson, G.~Courtois, and S.~Gallot.
\newblock Minimal entropy and {M}ostow's rigidity theorems.
\newblock {\em Ergodic Theory and Dynamical Systems}, 16(4):623--649, 1996.

\bibitem{Bonahon_laminations}
F.~Bonahon.
\newblock Geodesic laminations with transverse {H}\"{o}lder distributions.
\newblock {\em Ann. Sci. \'{E}cole Norm. Sup. (4)}, 30(2):205--240, 1997.

\bibitem{Brody_Reyes}
N.~Brody and E.~Reyes.
\newblock Approximating hyperbolic lattices by cubulations.
\newblock {\em Preprint, [arXiv:2404.01511]}, 2024.

\bibitem{BurnsKatok}
K.~Burns and A.~Katok.
\newblock Manifolds with nonpositive curvature.
\newblock {\em Ergodic Theory Dynam. Systems}, 5(2):307--317, 1985.

\bibitem{CMN}
D.~Calegari, F.~Marques, and A.~Neves.
\newblock Counting minimal surfaces in negatively curved 3-manifolds.
\newblock {\em Duke Math. J.}, 171(8):1615--1648, 2022.

\bibitem{Croke}
C.~Croke.
\newblock Rigidity for surfaces of nonpositive curvature.
\newblock {\em Comment. Math. Helv.}, 65(1):150--169, 1990.

\bibitem{Croke_Dairbekov}
C.~Croke and N.~Dairbekov.
\newblock Lengths and volumes in {R}iemannian manifolds.
\newblock {\em Duke Math. J.}, 125(1):1--14, 2004.

\bibitem{Croke_Dairbekov_Sharafutdinov}
C.~Croke, N.~Dairbekov, and V.~Sharafutdinov.
\newblock Local boundary rigidity of a compact {R}iemannian manifold with
  curvature bounded above.
\newblock {\em Trans. Amer. Math. Soc.}, 352(9):3937--3956, 2000.

\bibitem{Dai_Li}
S.~Dai and Q.~Li.
\newblock Domination results in {$n$}-{F}uchsian fibers in the moduli space of
  {H}iggs bundles.
\newblock {\em Proc. Lond. Math. Soc. (3)}, 124(4):427--477, 2022.

\bibitem{Deroin_Tholozan}
B.~Deroin and N.~Tholozan.
\newblock Dominating surface group representations by {F}uchsian ones.
\newblock {\em Int. Math. Res. Not.}, (13):4145--4166, 2016.

\bibitem{Einsiedler}
M.~Einsiedler.
\newblock Ratner's theorem for $\text{SL}(2,\mathbb{R})$-invariant measures.
\newblock {\em Preprint, [arXiv:0603483]}, 2006.

\bibitem{Flaminio}
L.~Flaminio.
\newblock Local entropy rigidity for hyperbolic manifolds.
\newblock {\em Comm. Anal. Geom.}, 3(3-4):555--596, 1995.

\bibitem{Gogolev_Reber}
A.~Gogolev and J.M. Reber.
\newblock A counterexample to marked length spectrum semi-rigidity.
\newblock {\em Preprint, [arXiv:2309.10882]}, 2023.

\bibitem{Gromov_FolPlateau1}
M.~Gromov.
\newblock Foliated {P}lateau problem. {I}. {M}inimal varieties.
\newblock {\em Geom. Funct. Anal.}, 1(1):14--79, 1991.

\bibitem{Gromov_FolPlateau2}
M.~Gromov.
\newblock Foliated {P}lateau problem. {II}. {H}armonic maps of foliations.
\newblock {\em Geom. Funct. Anal.}, 1(3):253--320, 1991.

\bibitem{Gromov3}
M.~Gromov.
\newblock Three remarks on geodesic dynamics and fundamental group.
\newblock {\em Enseign. Math. (2)}, 46(3-4):391--402, 2000.

\bibitem{Gueritaud_Kassel_Wolff}
F.~Gu\'eritaud, F.~Kassel, and M.~Wolff.
\newblock Compact anti--de {S}itter 3-manifolds and folded hyperbolic
  structures on surfaces.
\newblock {\em Pacific J. Math.}, 275(2):325--359, 2015.

\bibitem{GuillarmouLefeuvre}
C.~Guillarmou and T.~Lefeuvre.
\newblock The marked length spectrum of {A}nosov manifolds.
\newblock {\em Ann. of Math. (2)}, 190(1):321--344, 2019.

\bibitem{Guillemin_Kazhdan}
V.~Guillemin and D.~Kazhdan.
\newblock Some inverse spectral results for negatively curved {$2$}-manifolds.
\newblock {\em Topology}, 19(3):301--312, 1980.

\bibitem{HamenstadtMLS}
U.~Hamenst\"{a}dt.
\newblock Cocycles, symplectic structures and intersection.
\newblock {\em Geom. Funct. Anal.}, 9(1):90--140, 1999.

\bibitem{HamenstadtKM}
U.~Hamenst\"{a}dt.
\newblock Incompressible surfaces in rank one locally symmetric spaces.
\newblock {\em Geom. Funct. Anal.}, 25(3):815--859, 2015.

\bibitem{humbert2024katok}
T.~Humbert.
\newblock Katok's entropy conjecture near real and complex hyperbolic metrics.
\newblock {\em Preprint, [arXiv:2409.11197]}, 2024.

\bibitem{Kahn_Labourie_Mozes}
J.~Kahn, F.~Labourie, and S.~Mozes.
\newblock Surface groups in uniform lattices of some semi-simple groups.
\newblock {\em Acta Math.}, 232:79--220, 2024.

\bibitem{KM2}
J.~Kahn and V.~Markovi\'{c}.
\newblock Counting essential surfaces in a closed hyperbolic three-manifold.
\newblock {\em Geom. Topol.}, 16(1):601--624, 2012.

\bibitem{KM1}
J.~Kahn and V.~Markovi\'c.
\newblock Immersing almost geodesic surfaces in a closed hyperbolic three
  manifold.
\newblock {\em Ann. of Math. (2)}, 175(3):1127--1190, 2012.

\bibitem{KMS}
J.~Kahn, V.~Markovic, and I.~Smilga.
\newblock Geometrically and topologically random surfaces in a closed
  hyperbolic three manifold.
\newblock {\em Preprint, [arXiv:2309.02847]}.

\bibitem{Katok_entropy_closed}
A.~Katok.
\newblock Entropy and closed geodesics.
\newblock {\em Ergodic Theory Dynam. Systems}, 2(3-4):339--365, 1982.

\bibitem{Katok4}
A.~Katok.
\newblock Four applications of conformal equivalence to geometry and dynamics.
\newblock {\em Ergodic Theory Dynam. Systems}, 8$^*$:139--152, 1988.

\bibitem{LabourieGAFA}
F.~Labourie.
\newblock Probl\`emes de {M}onge-{A}mp\`ere, courbes holomorphes et
  laminations.
\newblock {\em Geom. Funct. Anal.}, 7(3):496--534, 1997.

\bibitem{LabourieInvent}
F.~Labourie.
\newblock Un lemme de {M}orse pour les surfaces convexes.
\newblock {\em Invent. Math.}, 141(2):239--297, 2000.

\bibitem{Labourie_phase_space}
F.~Labourie.
\newblock The phase space of {$k$}-surfaces.
\newblock In {\em Rigidity in dynamics and geometry ({C}ambridge, 2000)}, pages
  295--307. Springer, Berlin, 2002.

\bibitem{LabourieAnnals}
F.~Labourie.
\newblock Random {$k$}-surfaces.
\newblock {\em Ann. of Math. (2)}, 161(1):105--140, 2005.

\bibitem{LabourieBourbaki}
F.~Labourie.
\newblock Asymptotic counting of minimal surfaces and of surface groups in
  hyperbolic 3-manifolds.
\newblock {\em S\'eminaire Bourbaki, expos{\'e} 1179 Ast\'{e}risque},
  (430):Exp. No. 1179, 425--457, 2021.

\bibitem{Ledrappier_har_BM}
F.~Ledrappier.
\newblock Harmonic measures and {B}owen-{M}argulis measures.
\newblock {\em Israel J. Math.}, 71(3):275--287, 1990.

\bibitem{Ledrappier_bord}
F.~Ledrappier.
\newblock Structure au bord des vari\'{e}t\'{e}s \`a courbure n\'{e}gative.
\newblock In {\em S\'{e}minaire de {T}h\'{e}orie {S}pectrale et
  {G}\'{e}om\'{e}trie, {N}o. 13, {A}nn\'{e}e 1994--1995}, volume~13 of {\em
  S\'{e}min. Th\'{e}or. Spectr. G\'{e}om.}, pages 97--122. Univ. Grenoble I,
  Saint-Martin-d'H\`eres, 1995.

\bibitem{Leung_Nomizu}
D.~Leung and K.~Nomizu.
\newblock The axiom of spheres in {R}iemannian geometry.
\newblock {\em J. Differential Geometry}, 5:487--489, 1971.

\bibitem{Li}
Q.~Li.
\newblock Harmonic maps for {H}itchin representations.
\newblock {\em Geom. Funct. Anal.}, 29(2):539--560, 2019.

\bibitem{Livsic}
A.~N. Liv\v{s}ic.
\newblock Certain properties of the homology of {$Y$}-systems.
\newblock {\em Mat. Zametki}, 10:555--564, 1971.

\bibitem{LoweGAFA}
B.~Lowe.
\newblock Deformations of totally geodesic foliations and minimal surfaces in
  negatively curved 3-manifolds.
\newblock {\em Geom. Funct. Anal.}, 31(4):895--929, 2021.

\bibitem{ln21}
B.~Lowe and A.~Neves.
\newblock Minimal surface entropy and average area ratio.
\newblock {\em J. Differential Geom.}, 130(2):443--475, 2025.

\bibitem{MR03}
C.~Maclachlan and A.~Reid.
\newblock {\em The arithmetic of hyperbolic 3-manifolds}, volume 219 of {\em
  Graduate Texts in Mathematics}.
\newblock Springer-Verlag, New York, 2003.

\bibitem{MN_currents}
F.~Marques and A.~Neves.
\newblock Conformal currents and the entropy of negatively curved
  three-manifolds.
\newblock {\em Preprint, [arXiv:2405.16302]}, 2024.

\bibitem{Mostow}
G.~D. Mostow.
\newblock Quasi-conformal mappings in {$n$}-space and the rigidity of
  hyperbolic space forms.
\newblock {\em Inst. Hautes \'{E}tudes Sci. Publ. Math.}, (34):53--104, 1968.

\bibitem{Muller_Puchta}
T.~M\"uller and J.-C. Puchta.
\newblock Character theory of symmetric groups and subgroup growth of surface
  groups.
\newblock {\em J. London Math. Soc. (2)}, 66(3):623--640, 2002.

\bibitem{Otal}
J.-P. Otal.
\newblock Le spectre marqu\'{e} des longueurs des surfaces \`a courbure
  n\'{e}gative.
\newblock {\em Ann. of Math. (2)}, 131(1):151--162, 1990.

\bibitem{Ratner_Duke}
M.~Ratner.
\newblock Raghunathan's topological conjecture and distributions of unipotent
  flows.
\newblock {\em Duke Math. J.}, 63(1):235--280, 1991.

\bibitem{Reid}
A.~Reid.
\newblock Totally geodesic surfaces in hyperbolic {$3$}-manifolds.
\newblock {\em Proc. Edinburgh Math. Soc. (2)}, 34(1):77--88, 1991.

\bibitem{Sagman}
N.~Sagman.
\newblock Infinite energy equivariant harmonic maps, domination, and anti--de
  {S}itter 3-manifolds.
\newblock {\em J. Differential Geom.}, 124(3):553--598, 2023.

\bibitem{Seppi}
A.~Seppi.
\newblock Minimal discs in hyperbolic space bounded by a quasicircle at
  infinity.
\newblock {\em Comment. Math. Helv.}, 91(4):807--839, 2016.

\bibitem{Smith_asymp}
G.~Smith.
\newblock On the asymptotic {P}lateau problem in {C}artan--{H}adamard
  manifolds.
\newblock {\em Preprint, [arXiv:2107.14670]}, 2021.

\end{thebibliography}
\end{document}